\documentclass{article}

\usepackage{microtype}
\usepackage{graphicx}
\usepackage{subcaption}
\usepackage{booktabs}
\usepackage[hyperfootnotes=false]{hyperref}

\AtBeginDocument{\raggedbottom}
\usepackage{multirow}

\usepackage[preprint]{icml2026}

\usepackage{amsmath}
\usepackage{amssymb}
\usepackage{mathtools}
\usepackage{amsthm}
\usepackage{bm}

\usepackage[capitalize,noabbrev]{cleveref}

\theoremstyle{plain}
\newtheorem{theorem}{Theorem}[section]
\newtheorem{proposition}{Proposition}[section]
\newtheorem{lemma}{Lemma}[section]
\newtheorem{corollary}{Corollary}[section]
\theoremstyle{definition}
\newtheorem{definition}{Definition}[section]
\newtheorem{assumption}{Assumption}[section]
\theoremstyle{remark}
\newtheorem{remark}{Remark}[section]

\newcommand{\zero}{\bm{0}}
\newcommand{\RR}{\mathbb{R}}

\newcommand{\EE}{\mathbb{E}}

\newcommand{\NN}{\mathbb{N}}

\newcommand{\prox}{\mathrm{prox}}

\newcommand{\bX}{\bm{X}}
\newcommand{\bx}{\bm{x}}
\newcommand{\bb}{\bm{b}}
\newcommand{\bc}{\bm{c}}

\newcommand{\be}{\bm{e}}

\newcommand{\bu}{\bm{u}}
\newcommand{\bz}{\bm{z}}
\newcommand{\ba}{\bm{a}}
\newcommand{\by}{\bm{y}}

\newcommand{\bY}{\bm{Y}}

\newcommand{\bd}{\bm{d}}
\newcommand{\bxi}{\bm{\xi}}

\DeclareMathOperator*{\argmin}{\mathrm{argmin}}

\DeclareMathOperator*{\Diag}{\mathrm{Diag}}
\newcommand{\calX}{\mathcal{X}}
\newcommand{\calY}{\mathcal{Y}}
\newcommand{\calB}{\mathcal{B}}
\newcommand{\calL}{\mathcal{L}}

\newcommand{\calS}{\mathcal{S}}

\newcommand{\calM}{\mathcal{M}}
\newcommand{\calO}{\mathcal{O}}
\newcommand{\calI}{\mathcal{I}}
\newcommand{\calE}{\mathcal{E}}

\newcommand{\bD}{\bm{D}}

\newcommand{\bv}{\bm{v}}
\newcommand{\bw}{\bm{w}}

\newcommand{\sumK}{\sum_{k=0}^{K-1}}

\newcommand{\bI}{\bm{I}}

\newcommand{\bS}{\bm{S}}

\newcommand{\bW}{\bm{W}}
\newcommand{\blambda}{\bm{\lambda}}

\newcommand{\bs}{\bm{s}}

\newcommand{\tf}{\mathbf{f}}

\newcommand{\eproof}{\hfill $\square$}

\newif\ifshowrevisioncolors
\showrevisioncolorstrue

\numberwithin{equation}{section}
\Crefname{algorithm}{Algorithm}{Algorithms}
\newcommand{\hidecomment}[1]{}

\icmltitlerunning{MoSSP for Nonconvex Constrained DC-Regularized Optimization}

\begin{document}

\twocolumn[
  \icmltitle{MoSSP: A Momentum-Based Single-Loop Stochastic Penalty Method for Nonconvex Constrained DC-Regularized Optimization}
  \begin{icmlauthorlist}
\icmlauthor{Luxuan Li}{buaa}
\icmlauthor{Chunfeng Cui}{buaa}
\icmlauthor{Xiao Wang}{sysu}
\end{icmlauthorlist}

\icmlaffiliation{buaa}{School of Mathematical Sciences, Beihang University, Beijing 100191, China}
\icmlaffiliation{sysu}{School of Computer Science and Engineering, Sun Yat-sen University, Guangzhou 510006, China}
\icmlcorrespondingauthor{Xiao Wang}{wangx936@mail.sysu.edu.cn}

\icmlkeywords{Nonsmooth optimization, DC optimization, Constrained optimization, Penalty methods, Smoothing approximation, Momentum, Oracle complexity}

  \vskip 0.3in
]

\printAffiliationsAndNotice{}
\begin{abstract}
In this paper, we study a structured class of nonconvex constrained stochastic problems with difference-of-convex (DC) regularization, where the feasible set is possibly nonconvex and the concave part of the DC regularizer is allowed to be nonsmooth.
The fundamental challenge lies in maintaining feasibility for nonconvex constraints while achieving favorable oracle complexity.
Although single-loop algorithms efficiently solve unconstrained DC optimization problems, their potential for constrained optimization with DC structure remains largely unexplored. To address this gap, we develop \textbf{MoSSP}, a \textbf{Mo}mentum-based \textbf{S}ingle-loop \textbf{S}tochastic \textbf{P}enalty method for such problems with provable complexity guarantees. The key idea is to apply a single stochastic proximal-gradient step to the Moreau envelope of the penalty plus the convex DC part, with the concave part's proximal mapping computed in parallel. We derive two algorithm variants: a Polyak-momentum version with $\calO(\varepsilon^{-4})$ oracle complexity for finding stochastic $\varepsilon$-KKT points, and an improved $\calO(\varepsilon^{-3})$ version incorporating recursive momentum. Experimental results demonstrate the effectiveness of the proposed algorithms.
\end{abstract}
\section{Introduction}
In this paper, we consider a class of nonconvex constrained stochastic difference-of-convex (DC)-regularized optimization problems
\begin{align}
\label{Prob: primal problem with DC in one block}
   \min_{\bx \in \RR^n } \quad  & F(\bx) = \{f(\bx) = \EE_{\xi}[\tf(\bx,\xi)]\} + h(\bx) - g(\bx), \notag  \\
   \text{s.t.} \quad & \bc(\bx) = \bm{0},  \tag{{\bf P}}
\end{align}
where $\xi$ is a random variable on a probability space $\Xi$, independent of $\bx$. The function $f:\RR^n\!\to\!\RR$ and mapping $\bc:\RR^n\!\to\!\RR^m$ are continuously differentiable, while $h:\RR^n\!\to\!\RR\cup\{+\infty\}$ and $g:\RR^n\!\to\!\RR$ are proper, closed, convex, and possibly nonsmooth functions. The feasible set is assumed to be nonempty.
The proximal mappings of \(h\) and \(g\) are assumed to be available
individually. Computing exact gradients of $f$ is often prohibitive because evaluating the expectation can be costly or the distribution of $\xi$ is not explicitly known. Therefore, we assume only access to stochastic gradients of $f$ at queried points.

Problem~\eqref{Prob: primal problem with DC in one block} captures a wide
range of applications in machine learning and statistical learning, where
\(f\) is a data-fidelity loss and the DC structure appears in the regularizer \(h-g\), which promotes desirable structures such as sparsity; see
\citep[Table~1]{gong2013general} and~\citep{xu2019stochastic,wen2018proximal}. Nonconvex constraints arise naturally from structure, resource, or
safety requirements; examples include energy budgets in DNN compression
\citep{yang2019ecc,chen2018constraint}, safety constraints in reinforcement
learning \citep{paternain2019constrained,zhang2025exchange,zhang2026augmented}, and low-rank or sphere constraints
\citep{Roy2018GeometryAC,witten2009penalized}. Crucially, we do not assume access to the proximal operator of the whole term \(h - g\); for instance, neither the truncated $\ell_{1-2}$ regularizer \citep{ma2017truncated} nor the truncated \(\ell_1\) regularizer \citep{luo2015new} admits a closed-form proximal mapping. Moreover, \(g\) in Problem \eqref{Prob: primal problem with DC in one block} is not necessarily differentiable. Prominent examples include the capped \(\ell_1\) regularization model \citep{gong2013general} and the \(\ell_{1-2}\) regularization model \citep{yin2015minimization} commonly used in compressed sensing.

\subsection{Motivating Examples}
We provide two motivating examples that can be formulated as instances of Problem~\eqref{Prob: primal problem with DC in one block}.

\textbf{DNN training under energy budgets} \citep{yang2019ecc}.
In energy-aware structured pruning, let $\bW=\{\bw_u\}_{u=1}^L$ and 
$\bS=\{s_u\}_{u=1}^L$ denote the layer-wise weight tensors and 
sparsity-level variables. A typical formulation is 
\begin{equation*}
\begin{aligned}
\min_{\bW,\bS} \quad &
f(\bW) + \lambda R_{\rm DC}(\bW), \\
\text{s.t.} \quad &
\phi(\bw_u) \leq s_u,\quad
\psi(\bS) \leq E_{\rm budget},\quad \forall u,
\end{aligned}
\end{equation*}
where $f(\bW):=\mathbb{E}_{\xi}[\tf(\bW;\xi)]$ is the expected training 
loss, $\phi(\bw_u)$ measures layer-wise sparsity, $\psi(\bS)$ models 
total energy consumption, and $R_{\rm DC}(\bW)$ is a DC-structured 
sparsity regularizer (e.g., capped-$\ell_1$ penalty). Both $\phi$ and 
$\psi$ can be nonlinear and nonconvex.

\textbf{Nonnegative sparse CCA} \citep{witten2009penalized}.
For paired data $\bX\in\RR^{N\times p}$ and $\bY\in\RR^{N\times q}$, 
sparse nonnegative canonical loading vectors can be obtained via
\begin{equation*}
\begin{aligned}
\min_{\bu,\bv} \quad &
-\bu^\top \bX^\top \bY \bv
+ \lambda_1 \rho(\bu) + \lambda_2 \rho(\bv), \\
\text{s.t.} \quad &
\|\bu\|_2^2 = 1,\quad
\|\bv\|_2^2 = 1,\quad
\bu, \bv \geq \zero,
\end{aligned}
\end{equation*}
where $\rho(\cdot)$ is a DC-structured sparsity penalty (e.g., capped 
$\ell_1$). The problem is nonconvex due to the bilinear objective, 
unit-sphere constraints, and the nonconvex regularizer.
\subsection{Related Work}
For DC optimization problems, prior work has largely focused on unconstrained or convex-set constrained settings, where feasibility can be maintained by projection. The classical DC algorithm (DCA) \citep{tao1986algorithms} linearizes the concave part and solves a sequence of convex subproblems. Following this idea of constructing tractable convex majorants, proximal, Bregman, and stochastic variants have been developed with global convergence or complexity guarantees \citep{wen2018proximal,liu2019refined,yang2025nonmonotone,liu2022inexact,yang2025inexact,nitanda2017stochastic,xu2019stochastic}. Handling general functional constraints, however, requires additional care. Recent efforts address convex inequality constraints in Problem \eqref{Prob: primal problem with DC in one block} via convex approximations of the feasible region, obtaining favorable convergence guarantees under suitable constraint qualifications in the deterministic setting; see \citep{kanzow2025adaptive,liu2025convergence,yu2021convergence}. The difficulty becomes more pronounced when nonconvex constraints are present, since feasibility is generally difficult to maintain. While there are algorithms tailored to specific nonconvex constraints, such as manifolds \citep{bergmann2024difference,jiang2025inexact} or conic sets \citep{xu2025smoothing}, such analyses are inherently geometry-dependent and do not extend easily to general nonconvex constraints. More recently, \citet{LeThiHuynhDinh2024MOR} studied DC composite optimization under nonconvex constraints via an exact penalty method, but non-asymptotic analysis in the stochastic setting remains unaddressed.

Note that most of the aforementioned methods are double-loop algorithms, in which a subproblem must be (approximately) solved in inner iterations. As a result, they can be relatively complex due to extensive hyperparameter tuning (e.g., the penalty parameter in \citep[Algorithm 3,][]{LeThiHuynhDinh2024MOR}) and the need for precise termination criteria for subproblem solvers \citep[e.g.,][]{yu2021convergence,liu2022inexact}. The growing scale of data and models motivates the need for more efficient optimization methods. In this context, single-loop designs become attractive, especially in stochastic settings.
For Problem \eqref{Prob: primal problem with DC in one block} with \(g\equiv 0\) or \(h-g\equiv 0\), single-loop stochastic penalty algorithms, including augmented Lagrangian methods, already provide compelling oracle complexity guarantees for finding approximate KKT points \citep{Alacaoglu2023ComplexityOS,shi2025momentum,liu2025single,lu2024variancereducedfirstordermethodsdeterministically}. Nevertheless, the effectiveness of such single-loop approaches normally depends on tractable subproblems, a condition typically met when \(h\) admits an efficiently computable proximal mapping. In Problem \eqref{Prob: primal problem with DC in one block}, however, the term \(h - g\) is generally non-prox-friendly. Consequently, single-loop algorithms capable of handling such nonsmooth DC regularizations under general nonconvex constraints remain scarce.

To address this inherent nonsmoothness in the DC structure, some recent work has explored alternative smoothing techniques. Specifically, several studies \citep{moudafi2021complete,sun2023algorithms,hu2024single,chayti2025stochastic} apply the Moreau envelope to each convex component in the DC structure and then take their difference, yielding the Difference-of-Moreau-Envelopes (DME) for non-asymptotic convergence analysis. The DME approach preserves the global DC structure, and stationary points are recoverable via the inexpensive proximal operator of each original component. Among these works, the method of \citet{hu2024single} is, to the best of our knowledge, the first single-loop method for stochastic difference-of-weakly-convex (DWC) optimization based on the DME idea, and achieves $\calO(\varepsilon^{-4})$ oracle complexity. More recently, \citet{chayti2025stochastic} introduced momentum into stochastic DC optimization by applying it to the concave component and obtained $\calO(\varepsilon^{-4})$ oracle complexity. Despite these advances, most existing methods still rely on double-loop schemes to achieve state-of-the-art non-asymptotic rates \citep{nitanda2017stochastic,xu2019stochastic}, or are confined to unconstrained problems; see \citep{nitanda2017stochastic,xu2019stochastic,hu2024single,chayti2025stochastic}. While \citet{sun2023algorithms} developed an augmented Lagrangian algorithm for linearly constrained DC problems in a deterministic setting, such constructions can become unstable under stochastic noise due to sensitive multiplier updates, thus hindering their practical adoption.\footnote{A broader review is deferred to Appendix~\ref{appendix:related_work}.}

In light of these limitations and to facilitate practical implementation, we develop an efficient single-loop, penalty-based method for solving Problem~\eqref{Prob: primal problem with DC in one block}. Inspired by the DME technique, the proposed method smooths the penalized formulation and solves the resulting subproblems using stochastic proximal gradient updates. A refined analysis is then developed to establish favorable complexity guarantees for the proposed method. The key contributions of this paper are summarized as follows.
\begin{itemize}
\item[$\blacktriangleright$]\textbf{Simple single-loop framework.} We propose \textbf{MoSSP}, a \textbf{mo}mentum-based \textbf{s}ingle-loop \textbf{s}tochastic
\textbf{p}enalty framework for solving Problem~\eqref{Prob: primal problem with DC in one block}. By fully leveraging the proximal operators of the components in the DC regularizer, MoSSP avoids solving inner subproblems and extensive hyperparameter tuning while flexibly integrating advanced variance reduction techniques.
\item[$\blacktriangleright$]\textbf{Comparable complexity guarantee.} To the best of our knowledge, we establish the first complexity results for nonconvex DC-regularized optimization with nonlinear constraints. Under mild conditions, \textbf{MoSSP-P} (with Polyak momentum) achieves $\calO(\varepsilon^{-4})$ oracle complexity for finding a stochastic $\varepsilon$-KKT point, while the recursive momentum variant, \textbf{MoSSP-R}, attains an improved $\calO(\varepsilon^{-3})$ rate under the mean-squared smoothness assumption, matching the lower bound of unconstrained stochastic optimization~\cite{arjevani2023lower} under the same assumption. A comparison with existing algorithms for solving (un)constrained DC(-regularized) problems is shown in \Cref{tab.com}.
\item[$\blacktriangleright$] \textbf{Novel theoretical analysis.} We develop a DC-aware complexity analysis that explicitly characterizes stochastic errors within the criticality measure. By constructing a DC-specific potential function, our analysis achieves coordinated control of smoothing, momentum, and penalty parameters, thereby maintaining near-optimal complexity compared to the unconstrained setting.
\end{itemize}

\section{Preliminaries}
\label{Section: Notations and Preliminaries}
\begin{table*}[t]
\caption{\small Comparison of algorithms for (un)constrained DC{(-regularized)} optimization in stochastic and deterministic settings.}
\label{tab.com}
\begin{center}
\resizebox{\textwidth}{!}{%
\begin{tabular}{lcccccc}
\toprule\footnotesize
Algorithm & {Cons.} Type & DC Struct. & Stoch. Assump. & Single\mbox{-}Loop & Iter. Comp. & Oracle Comp. \\
\midrule
SPD \cite{nitanda2017stochastic} & -- & $\checkmark^*$ & \(L\)-sm & -- & $\calO(\varepsilon^{-4})$ & -- \\
SSDC\mbox{-}SPG \cite{xu2019stochastic} & --  & $\checkmark$  &  $\nu$-H\"older &  -- & $\calO(\varepsilon^{-4/\nu})$  & -- \\
SMAG \cite{hu2024single} & --  & $\checkmark^*$  &  -- &  $\checkmark$ & $\calO(\varepsilon^{-4})$ & $\calO(\varepsilon^{-4})$ \\
Algo.~2\mbox{-}Polyak \cite{chayti2025stochastic} & --  & $\checkmark$  &  \(L\)-sm &  $\checkmark$ & $\calO(\varepsilon^{-4})$ & $\calO(\varepsilon^{-4})$ \\
Algo.~2\mbox{-}Recursive \cite{chayti2025stochastic} & --  & $\checkmark$  &  Assump.~3.1&  $\checkmark$ & $\calO(\varepsilon^{-4})$ & $\calO(\varepsilon^{-4})$ \\
\midrule
CLCDC\mbox{-}ALM \cite{sun2023algorithms} & linear & $\checkmark$  &  --    & -- & $\tilde{\calO}(\varepsilon^{-3})$ & -- \\
iMBAdc \cite{liu2025convergence} & cvx  & $\checkmark$  &  --   & -- & {$\calO(\varepsilon^{-2})$} & -- \\
MLALM \cite{shi2025momentum}   & ncvx  & -- & Assump.~3.1 & $\checkmark$ &  $\calO(\varepsilon^{-3})$ & $\calO(\varepsilon^{-3})$\\
\textbf{MoSSP\mbox{-}P (Algo.~\ref{alg:mossp-p})}
& \textbf{ncvx}  & $\checkmark$  &  {\(L\)}-sm & $\checkmark$  &  $\calO(\varepsilon^{-4})$ & $\calO(\varepsilon^{-4})$\\
\textbf{MoSSP\mbox{-}R (Algo.~\ref{alg:mossp-r})}
& \textbf{ncvx}  & $\checkmark$  &  Assump.~3.1 & $\checkmark$  & \textbf{$\calO(\varepsilon^{-3})$}  & \textbf{$\calO(\varepsilon^{-3})$}\\
\bottomrule
\end{tabular}}%
\par\vspace{1pt}
\begin{minipage}{\textwidth}
\footnotesize
\textbf{Notes.}\;
Abbreviations: Cons.~Type = constraint type; DC Struct.~= DC structure;
cvx = convex; ncvx = nonconvex; $\tilde{\calO}(\cdot)$ hides polylogarithmic factors.\;
{\(\checkmark^{*}\) marks related but different unconstrained DC-type
models: SPD~\citep{nitanda2017stochastic} studies stochastic DC programs
\(\min_x g(x)-h(x)\) with differentiable convex components, whereas
SMAG~\citep{hu2024single} studies stochastic DWC optimization under
weak-convexity assumptions; neither covers
the nonsmooth DC-regularized setting in
Problem~\eqref{Prob: primal problem with DC in one block}.}\;
Iter.~Comp.\ counts the number of (outer) iterations; Oracle Comp.\ counts the total
number of stochastic first-order oracle calls; Stoch.~Assump.\ specifies the stochastic
assumption, with deterministic methods (CLCDC-ALM \cite{sun2023algorithms}, iMBAdc \cite{liu2025convergence}) requiring none;
\(L\)-sm $=$ expected Lipschitz smoothness; Assump.~3.1 $=$ mean-squared
smoothness assumption; $\nu$-H\"older $=$ gradient is $\nu$-H\"older continuous.\;
For iMBAdc, the $\calO(\varepsilon^{-2})$ bound counts only outer iterations;
the total number of inner iterations is not quantified.\;
For Algo.~2\mbox{-}Polyak/Recursive \cite{chayti2025stochastic}, the concave component is assumed smooth,
while the convex component is only required to have bounded subgradients; applying
recursive momentum to the smooth concave part improves its per-sample oracle complexity
to $\calO(\varepsilon^{-3})$, but the convex-component queries remain
$\calO(\varepsilon^{-4})$ and therefore dominate the total complexity.
\end{minipage}
\end{center}
\vskip -0.1in
\end{table*}

\noindent\textbf{Notations.}
We use $\|\cdot\|$ for the Euclidean norm and $\langle\cdot,\cdot\rangle$ for the
Euclidean inner product. The gradient of a differentiable function $f$ at $\bx$ is
denoted by $\nabla f(\bx)$. For the constraint mapping $\bc:\RR^n\to\RR^m$, we use $\nabla\bc(\bx)$ to denote the transpose of its Jacobian, i.e., $\nabla\bc(\bx):=J_{\bc}(\bx)^\top\in\RR^{n\times m}$. For any point $\bx\in\RR^n$ and any set
$\calS\subseteq\RR^n$, $\mathrm{dist}(\bx,\calS):=\inf_{\by\in\calS}\|\bx-\by\|$
denotes the point-to-set distance; for $\calX,\calY\subseteq\RR^n$,
$\mathrm{dist}(\calX,\calY):=\inf_{\bx\in\calX,\,\by\in\calY}\|\bx-\by\|$.
For an extended-real-valued function $\varphi$, $\partial \varphi(\bx)$ denotes the
general (limiting) subdifferential, which reduces to the convex subdifferential when
$\varphi$ is convex. Let $\xi^{[k]}=\{\xi^{0},\ldots,\xi^{k}\}$ be the collection of i.i.d.
samples drawn up to iteration $k$. We use $\EE[\cdot]$ for expectation and
$\EE[\cdot\mid\xi^{[k]}]$ for conditional expectation given the sample history.
\subsection{Criticality in DC Optimization}
Consider the unconstrained DC optimization problem:
\begin{equation}
    \min_{\bx \in \RR^n} \Psi (\bx) := \phi(\bx) - g(\bx),
    \label{Eq: unconstrained DC problem}
\end{equation}
where $\phi: \RR^n \to \RR \cup \{+\infty\}$ is $m_{\phi}$-weakly convex (possibly nonsmooth), and $g: \RR^n \to \RR \cup \{+\infty\}$ is a proper, closed, and convex function.

As is well known, the \textit{Moreau envelope} provides favorable smoothing properties for weakly convex functions. For the component function $\phi$ and any $\mu \in (0, 1/m_{\phi})$, its Moreau envelope $\calM_{\mu\phi}$ and the associated proximal mapping are well-defined as:
\begin{align}
\calM_{\mu\phi}(\bz) &:= \min_{\bx \in \RR^n} \left\{ \phi(\bx) + \frac{1}{2\mu} \| \bx - \bz\|^2 \right\}, \notag \\
\prox_{\mu\phi}(\bz) &:=\arg\min_{\bx \in \RR^n} \left\{ \phi(\bx) + \frac{1}{2\mu} \| \bx - \bz \|^2 \right\}, \label{Eq: the proximal operator}
\end{align}
respectively.
Note that $\calM_{\mu \phi}$ serves as a smooth surrogate of $\phi$ with gradient \citep{Moreau1965}
\begin{align}
         &\nabla \calM_{\mu \phi}(\bz) \notag \\
         =&\, \mu^{-1}(\bz - \prox_{\mu \phi}(\bz)) \in \partial \phi(\prox_{\mu \phi}(\bz)).\label{Eq: the gradient of Moreau envelope}
\end{align}
For the weakly convex optimization problem $\min \phi(\bx)$, given \(\varepsilon > 0\), one typically seeks an $\varepsilon$-stationary point of its Moreau envelope $\calM_{\mu\phi}$, i.e., \(\bar{\bx}\) with \(\| \nabla\calM_{\mu\phi}(\bar{\bx})\| \leq \varepsilon \), as a relaxed convergence criterion \cite{davis2019stochastic}. However, this approach cannot be directly extended to Problem \eqref{Eq: unconstrained DC problem}, as the objective $\Psi$ may lack weak convexity. We thus introduce the notion of an \emph{$\varepsilon$-critical point} for DC optimization \cite{sun2023algorithms}, i.e., a point $\bar{\bx}\in \RR^{n}$ is an \emph{$\varepsilon$-critical point} of $\Psi$ if there exist
$\bar{\by} \in \RR^n$ and $\bar{\bu}\in \partial \phi (\bar{\bx}) - \partial g(\bar{\by})$ such that \(\max \{\|\bar{\bu}\|, \|\bar{\bx}- \bar{\by}\|\} \leq \varepsilon\). This two-point formulation serves as a natural stopping criterion for DCA-type methods in practice, since $\partial \phi$ and $\partial g$ are usually evaluated at different points at each iteration in standard DCA; for example, a subgradient $\xi^k \in \partial g(x^k)$ is used to compute $x^{k+1}$. Note that when \(\varepsilon = 0 \), this definition recovers the classical DC criticality condition \cite{pang2017computing}.

Moreover, the overall Moreau envelope of $\Psi$ in Problem \eqref{Eq: unconstrained DC problem} is computationally intractable. To this end, we apply the Moreau envelope to $\phi$ and $g$ individually and take their difference (DME) to define a smooth approximation of Problem \eqref{Eq: unconstrained DC problem} for any $\mu \in (0, 1/m_{\phi})$:
\begin{align}
\label{Eq: the DME smoothing problem}
 \min\limits_{\bz \in \RR^n }\Psi_{\mu}(\bz) := \calM_{\mu \phi}(\bz) - \calM_{\mu g}(\bz).
\end{align}
The smoothness of \( \Psi_{\mu}(\bz)\) was
established in \citet{hiriart1991regularize}.

However, it is not immediately clear how the approximate solutions of 
Problems \eqref{Eq: the DME smoothing problem} and \eqref{Eq: unconstrained DC problem} relate to each other. We establish that any $\varepsilon$-stationary point of $\Psi_{\mu}$ 
can be converted into an $\varepsilon$-critical point of $\Psi$, with the detailed proof deferred to Appendix~\ref{App: proof of prop2.2}. This correspondence plays a key role in our analysis. Complete properties of DME and the equivalence between solutions of these two problems are deferred to Appendices~\ref{Subsection: Difference-of-Moreau-Envelopes} and~\ref{App: Solution Correspondence Theory}, respectively.
\begin{proposition}
\label{Prop: approximate correspondence}
Given $\varepsilon > 0$, for any $0 < \mu < 1/m_{\phi}$, if $\bar{\bz}$ satisfies $\| \nabla \Psi_{\mu}(\bar{\bz})\| \leq \min\{1, \mu^{-1}\} \varepsilon$, then the point \(\bar{\bx} := \prox_{\mu \phi}(\bar{\bz})\) is an \textit{$\varepsilon$-critical point} of $\Psi$ with the auxiliary point \(\bar{\by} := \prox_{\mu g}(\bar{\bz}) \).
\end{proposition}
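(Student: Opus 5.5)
The plan is to compute $\nabla\Psi_\mu(\bar{\bz})$ explicitly using the Moreau-envelope gradient formula \eqref{Eq: the gradient of Moreau envelope}, applied separately to $\phi$ and to $g$. For $\mu\in(0,1/m_\phi)$, the function $\phi+\frac{1}{2\mu}\|\cdot-\bar{\bz}\|^2$ is strongly convex, so $\bar{\bx}=\prox_{\mu\phi}(\bar{\bz})$ is well defined and single valued. Since $g$ is convex, $\bar{\by}=\prox_{\mu g}(\bar{\bz})$ is also well defined for every $\mu>0$. Formula \eqref{Eq: the gradient of Moreau envelope} then yields
\begin{align*}
\bar{\bv}:=\mu^{-1}(\bar{\bz}-\bar{\bx})\in\partial\phi(\bar{\bx}),\qquad \bar{\bw}:=\mu^{-1}(\bar{\bz}-\bar{\by})\in\partial g(\bar{\by}).
\end{align*}
For the weakly convex $\phi$, the inclusion follows from the first-order optimality condition of the prox subproblem in the limiting-subdifferential sense. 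Subtracting the two gradients gives
\begin{align*}
\nabla\Psi_\mu(\bar{\bz})=\bar{\bv}-\bar{\bw}=\mu^{-1}(\bar{\by}-\bar{\bx}).
\end{align*}

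The key observation is that this single vector controls both quantities in the definition of an $\varepsilon$-critical point. First take $\bar{\bu}:=\bar{\bv}-\bar{\bw}\in\partial\phi(\bar{\bx})-\partial g(\bar{\by})$. Then
\begin{align*}
\|\bar{\bu}\|=\|\nabla\Psi_\mu(\bar{\bz})\|\le\min\{1,\mu^{-1}\}\varepsilon\le\varepsilon.
\end{align*}
Second, the gap between the two points satisfies
\begin{align*}
\|\bar{\bx}-\bar{\by}\|=\mu\|\nabla\Psi_\mu(\bar{\bz})\|\le\mu\min\{1,\mu^{-1}\}\varepsilon=\min\{\mu,1\}\varepsilon\le\varepsilon.
\end{align*}
Hence $\max\{\|\bar{\bu}\|,\|\bar{\bx}-\bar{\by}\|\}\le\varepsilon$, which is exactly the statement. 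The factor $\min\{1,\mu^{-1}\}$ in the hypothesis is there precisely to handle both regimes, $\mu\le1$ and $\mu>1$, simultaneously.

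The main point needing care is justifying the subgradient inclusion for the weakly convex $\phi$, i.e.\ the differentiability of $\calM_{\mu\phi}$ and the identity $\mu^{-1}(\bar{\bz}-\bar{\bx})\in\partial\phi(\bar{\bx})$ with $\partial$ the limiting subdifferential. I would take this from the cited formula \eqref{Eq: the gradient of Moreau envelope}, or verify it directly: $\phi+\frac{m_\phi}{2}\|\cdot\|^2$ is convex, so the prox objective is convex. Fermat's rule then gives $0\in\partial\phi(\bar{\bx})+\mu^{-1}(\bar{\bx}-\bar{\bz})$, using the sum rule with a smooth function. Everything else is direct arithmetic.
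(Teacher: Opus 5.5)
Your proposal is correct and follows essentially the same route as the paper's proof: identify $\mu^{-1}(\bar{\bz}-\bar{\bx})\in\partial\phi(\bar{\bx})$ and $\mu^{-1}(\bar{\bz}-\bar{\by})\in\partial g(\bar{\by})$, note that their difference equals $\nabla\Psi_\mu(\bar{\bz})=\mu^{-1}(\bar{\by}-\bar{\bx})$, and bound both residuals by $\max\{1,\mu\}\|\nabla\Psi_\mu(\bar{\bz})\|\le\varepsilon$. Your added justification of the subgradient inclusion for weakly convex $\phi$ via Fermat's rule is a harmless supplement; the paper simply cites the Moreau-envelope gradient formula for this step.
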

\subsection{Approximate Solutions for Constrained DC-Regularized Optimization}
\label{Subsection: Approximate solutions for constrained DC problem}
As is standard in constrained nonconvex optimization, we seek points satisfying the KKT conditions of Problem \eqref{Prob: primal problem with DC in one block}. Due to the stochasticity in the objective, it is natural to evaluate the optimality residuals in expectation. We next define two types of $\varepsilon$-approximate stochastic KKT solutions for Problem \eqref{Prob: primal problem with DC in one block}, 
together with the associated criticality measures used in our analysis. A discussion of \Cref{Def: the definition of approximate point} is provided in Appendix~\ref{Appendix: Discussion on Def 2.1}.
\begin{definition}
\label{Def: the definition of approximate point}
Given $\varepsilon > 0$, a point $\bar{\bx}$ is a stochastic $\varepsilon$-KKT point of Problem \eqref{Prob: primal problem with DC in one block} if there exist $(\bar{\bu}, \bar{\by}, \bar{\blambda}) \in \RR^n\times \RR^n\times \RR^m$ such that {almost surely,}
\begin{align}
\label{Def: the definition of u}
\bar{\bu}\in \nabla f(\bar{\bx}) + \nabla \bc(\bar{\bx}) \bar{\blambda}+ \partial h(\bar{\bx}) - \partial g(\bar{\by}),
\end{align}
and
\begin{align}
\label{Eq: approximate KKT point}
\begin{cases}
\max\{ \EE [\|\bar{\bu}\|^{2}], \EE [\|\bar{\bx}-\bar{\by}\|^{2} ]\} \leq \varepsilon^{2}  \quad &\text{{\bf (criticality)}}, \\
\EE [\|\bc(\bar{\bx})\|^{2} ]\leq \varepsilon^{2} \quad &\text{{(\bf feasibility)}}.
\end{cases}
\end{align}
A point $\bar{\bx}$ is a stochastic $\varepsilon$-stationary point of Problem \eqref{Prob: primal problem with DC in one block} if there exist $(\bar{\bu}, \bar{\by}, \bar{\blambda})$ satisfying \eqref{Def: the definition of u} {almost surely} and
 \begin{align}
    \label{Eq: approximate stationarity point}
\begin{cases}
\max\{\EE[\|\bar{\bu}\|^{2}], \EE[\|\bar{\bx}-\bar{\by}\|^{2}]\} \leq \varepsilon^{2}  & \quad\quad\text{{\bf (criticality)}},\\
\EE[\|\nabla \bc(\bar{\bx}) \bc(\bar{\bx})\|^{2}] \leq \varepsilon^{2} & \!\!\! \!\!\!\!\!\! \!\!\! \!\!\! \!\!\! \!\text{{\bf (infeasible stationarity)}}.
\end{cases}
\end{align}
\end{definition}
\subsection{Main Assumptions}
\label{Subsection: main assumptions}
Throughout the paper, we make the following assumptions for Problem \eqref{Prob: primal problem with DC in one block}. {We assume $F^*:=\inf_{\bx\in\RR^n} F(\bx)>-\infty$.}
\begin{assumption}
    \label{Ass: boundedness of c and f} Function $f(\cdot)$ is
    $L_f$-smooth. Function \(\bc\) is \(L_c\)-smooth. There exist $C, G>0$ such that
    \begin{align}
         &\|\nabla f(\bx) \| \leq G,\,
         \sup_{\bv_h\in\partial h(\bx)}\|\bv_h\| \leq G,\,
         \sup_{\bv_g\in\partial g(\bx)}\|\bv_g\| \leq G,\\ \notag
         &\|\nabla \bc(\bx) \| \leq G,\quad
         \| \bc(\bx) \| \leq C,\quad \forall \bx\in\RR^n.\notag
    \end{align}
\end{assumption}
The boundedness condition is crucial for ensuring reliable convergence of the iterative sequence in stochastic constrained optimization. The inherent randomness of the process makes it difficult to guarantee that all iterates remain within a specific level set. The necessity of \Cref{Ass: boundedness of c and f} is well established in \cite{BeraCurtRobiZhou21, na2021inequality, NaAnitKola22, wang2025complexity,sun2023algorithms}.
\begin{assumption}
    \label{Ass: unbiased gradient oracle} There exists a constant $\sigma > 0$ such that
    \begin{align}
    &\EE_{\xi}[\nabla \tf(\bx, \xi)] = \nabla f(\bx), \notag \\
    & \EE_{\xi}[\| \nabla \tf(\bx, \xi) - \nabla f(\bx) \|^{2}] \leq \sigma^{2}, \quad \forall\bx
    \in \RR^n. \notag
    \end{align}
\end{assumption}
For nonconvex constraints, we impose the following nonsingularity condition on the constraints to control feasibility of the generated solutions. Constraint qualifications are often required in solving constrained optimization problems; see~\citep{shi2025momentum,lu2024variancereducedfirstordermethodsdeterministically,liu2025single,curtis2024worst}.
\begin{assumption}
    \label{Ass: constraints conditions}
    For the iterate sequence $\{\bx^k\}_{k\in\NN}$ generated by the algorithms, there exists a constant $\delta > 0$ such that
    \begin{align*}
        \|\nabla \bc(\bx^k) \bc(\bx^k)\| \geq \delta \| \bc(\bx^k) \|, \quad \forall\, k \geq 0.
    \end{align*}
\end{assumption}

\section{Momentum-Based Single-Loop Stochastic Penalty Algorithms}
\label{sec:algorithm}
\label{Section: algorithms and main complexity results}
\subsection{Algorithmic Framework}
\label{Subsection: Algorithmic Framework}
We incorporate the nonconvex constraints in Problem \eqref{Prob: primal problem with DC in one block} through a sequence of quadratically penalized DC objectives with nondecreasing penalty parameters $\{\rho_k\}\subset(0,\infty)$:
\begin{align}
\label{Eq: the quadratic penalty problem}
\min_{\bx \in \RR^n} \big\{ F_{\rho_k}(\bx) := \underbrace{Q_{\rho_k}(\bx) + h(\bx)}_{\psi_{\rho_k}(\bx)} - g(\bx) \big\},
\end{align}
where $Q_{\rho}(\bx) = f(\bx) + \frac{\rho}{2} \|\bc(\bx)\|^2$. To address the nonsmoothness of the penalty problems, we take the difference of the Moreau envelopes of each component to construct a smoothed surrogate function $F_{\rho,\mu}(\bz)$:
\begin{align}
\label{Eq: the DME smoothing surrogates for penalty function}
F_{\rho,\mu}(\bz) = \calM_{\mu \psi_{\rho}}(\bz) - \calM_{\mu g}(\bz), \quad \text{with } \rho = \rho_k.
\end{align}
Crucially, by Proposition~\ref{Prop: approximate correspondence}, if $\bar{\bz}$ satisfies $\|\nabla F_{\rho,\mu}(\bar{\bz})\| \leq \min\{1, \mu^{-1}\} \varepsilon$, then any $\bar{\bx}$ with $\|\bar{\bx} - \prox_{\mu g}(\bar{\bz})\| \leq \varepsilon$ is an $\varepsilon$-critical point of $F_\rho$. 
This gives the approximate criticality criterion targeted in \eqref{Eq: approximate KKT point} or \eqref{Eq: approximate stationarity point}. Therefore, we aim to efficiently find a point $\bar{\bz}$ by iteratively minimizing $F_{\rho,\mu}(\bz)$.

A standard strategy is to apply gradient descent to \(F_{\rho,\mu}(\bz)\), which inspires our framework. Following the properties of the Moreau envelope, the gradient of \(F_{\rho,\mu}\) at the point \(\bz^k\) is given by
\begin{equation}
\label{Eq: gradient of DME}
\begin{aligned}
\nabla F_{\rho,\mu}(\bz^k)
&= \mu^{-1} (\bz^k - \prox_{\mu \psi_\rho}(\bz^k)) \\
&\quad - \mu^{-1}(\bz^k-\prox_{\mu g}(\bz^k)) \\
&= \mu^{-1} \bigl(\prox_{\mu g}(\bz^k)
- \prox_{\mu \psi_\rho}(\bz^k)\bigr).
\end{aligned}
\end{equation}
This involves the proximal operators of \(g\) and \(\psi_\rho\). However, 
computing the exact proximal operator of the composite function $\psi_{\rho}$ is typically intractable. To this end, we maintain a variable \(\bx\) as an estimator of \(\prox_{\mu \psi_{\rho}}(\bz)\) and compute \(\prox_{\mu g}(\bz)\) in parallel. Specifically, at each iteration, we update \(\bx^k\) via a stochastic proximal gradient descent step
\begin{equation}
\label{Eq: The basic update of x}
\begin{aligned}
\bx^{k+1}
&= \prox_{\mu_k h}\bigl(\bz^k - \mu_k \tilde{\nabla} Q_{\rho_k}(\bx^k) \bigr) \\
&= \argmin_{\bx \in \RR^n} \Bigl\{
\langle \tilde{\nabla} Q_{\rho_k}(\bx^k) , \bx - \bx^{k} \rangle + h(\bx) \\
&\qquad\qquad\qquad\qquad
+ \frac{1}{2 \mu_{k}} \| \bx - \bz^{k} \|^{2}
\Bigr\},
\end{aligned}
\end{equation}
where \(\tilde{\nabla} Q_{\rho_k}(\bx^k)\) is a stochastic estimator of \(\nabla Q_{\rho_k}(\bx^k)\). Finally, \(\bz^k\) is updated via a gradient-type step, where the gradient \eqref{Eq: gradient of DME} is now partially estimated by replacing \(\prox_{\mu_k \psi_{\rho_k}}(\bz^k)\) with \(\bx^{k+1}\) from \eqref{Eq: The basic update of x}:
\begin{align} 
\label{eq:bz_update}
\bz^{k+1} = \bz^k - \beta \left( \prox_{\mu_k g}(\bz^k) - \bx^{k+1} \right),
\end{align}
where \(\beta > 0\) is the stepsize. All parameter sequences are positive and will be specified later.

Our framework allows for flexible choices of \(\tilde{\nabla} Q_{\rho_k}(\bx^k)\) in the \(\bx\)-update step (see \eqref{Eq: The basic update of x}), enabling the integration of advanced momentum techniques to enhance both theoretical complexity and practical efficiency. We refer to this unified framework as \textbf{MoSSP} (\textbf{Mo}mentum-based \textbf{S}ingle-loop \textbf{S}tochastic \textbf{P}enalty).

\subsection{MoSSP-P: MoSSP with Polyak Momentum}
\label{Subsection: Polyak-momentum based algorithm and its main complexity results}
\begin{algorithm}[t]
    \caption{\textbf{MoSSP-P}: \textbf{S}ingle-Loop \textbf{S}tochastic \textbf{P}enalty Algorithm with \textbf{P}olyak \textbf{Mo}mentum}
    \label{alg:mossp-p}
    \begin{algorithmic}
        \STATE {\bfseries Input:} maximum number of iterations $K$, initial point $\bx^{0} = \bz^{0} \in \RR^{n}$, a sequence $\{\alpha_k\}\subset(0,1)$, positive parameters $\mu_k$, $\rho_k$, and $\beta$.
        \FOR{$k = 0, 1, 2, \ldots K-1$}
        \STATE Sample $\xi^k$ from $\Xi$ and compute $\bS^{k}$ from \eqref{Eq: enhanced gradient estimator Polyak}.
        \STATE Compute $\bx^{k+1}$ using \eqref{eq:bx_update_Polyak}.
        \STATE Compute $\bz^{k+1}$ using \eqref{eq:bz_update}.
        \ENDFOR
        \STATE {\bfseries Output:} $\bx^{R+1}$, where $R \in \{0,1,\ldots,K-1\}$ is uniformly and randomly chosen.
    \end{algorithmic}
\end{algorithm}

Polyak momentum, also known as the heavy-ball method and originally proposed by \citet{polyak1964some}, has been extensively studied in large-scale nonconvex optimization \cite{liu2020,jelassi2022,gao2024non}. Motivated by this, at each iteration $k$, we draw an i.i.d. sample $\xi^k \sim \Xi$ and construct a Polyak momentum-based stochastic estimator for $\nabla Q_{\rho_k}(\bx^k)$:
\begin{align}
\label{Eq: enhanced gradient estimator Polyak}
\bS^{k} = \bs^k + \rho_k \nabla \bc(\bx^{k}) \bc(\bx^{k}),
\end{align}
where $\bs^k$ is updated by
\begin{align}
\bs^{k} = \begin{cases} (1 - \alpha_{k-1}) \bs^{k-1} + \alpha_{k-1} \nabla \tf(\bx^k, \xi^k), & k \ge 1, \\ \nabla \tf(\bx^0, \xi^0), & k=0. \end{cases}
\end{align}
The variable $\bx^{k+1}$ is then updated as:
\begin{align}
\bx^{k+1} = 
\prox_{\mu_k h}\bigl(\bz^k - \mu_k \bS^{k}\bigr), \label{eq:bx_update_Polyak}
\end{align}
where $\mu_k > 0$ for $k \geq 0$. The complete algorithm, referred to as \textbf{MoSSP-P}, is summarized in \Cref{alg:mossp-p}.

\noindent\textbf{Oracle Complexity of MoSSP-P.}\; We now analyze the oracle complexity of
MoSSP-P for finding a stochastic \(\varepsilon\)-KKT point and a stochastic \(\varepsilon\)-stationary point, respectively. The maximum number of iterations is limited to a fixed integer \(K\).

Note that \(Q_{\rho}(\bx)\) is smooth with Lipschitz constant \(L_\rho\), where $L_{\rho}= \rho \tilde{L}$ and $\tilde{L}= {\rho_0^{-1}L_f}+ G^2 + CL_c$, which is crucial for the subsequent analysis (see \Cref{Lemma: smoothness-penalty-function}).

We aim to bound the residuals in \eqref{Eq: approximate KKT point} and \eqref{Eq: approximate stationarity point}. Classical analysis of penalty methods in nonconvex constrained optimization often relies on the descent property of the penalty function. However, the concave component $-g$ in our penalty function \(F_{\rho_k}\) breaks this descent property, rendering the standard potential function construction ineffective. To this end, we construct the following potential function:
\begin{align}
    \calL_{\rho, \mu}(\bw) = Q_{\rho}(\bx) + h(\bx) + \frac{1}{2\mu}\|\bx - \bz\|^{2} - \calM_{\mu g}(\bz),\notag
\end{align}
where $\bw = (\bx,\bz)$. The function $\calL_{\rho, \mu}(\bw)$ is bounded from below (see Appendix~\ref{Sub: constructed potential function}).

To quantify the criticality of iterates \(\bw^k\) generated by MoSSP-P, we define \(\bu^{k+1}\) as:
{
\begin{equation}
\label{Eq: u-define-stochastic-polyak}
\begin{aligned}
\bu^{k+1}
&= \nabla Q_{\rho_k}(\bx^{k+1}) - {\bS^k} + \mu_k^{-1}(\prox_{\mu_k g}(\bz^k)- \bx^{k+1}).
\end{aligned}
\end{equation}}

It can be proved that \(\bu^{k+1}\in \partial \psi_{\rho_k}(\bx^{k+1}) - \partial g(\prox_{\mu_k g}(\bz^k))\) (see Appendix~\ref{Subsection: auxiliary lemmas}).

\Cref{Lemma: criticality-stationarity-measure} gives a one-step bound on $\bu^{k+1}$ relative to the iterate change (see Appendix~\ref{Subsection: auxiliary lemmas}). The stochastic error in \eqref{Eq: the criticality measure} implies that proper parameter settings are required to control variance and ensure favorable complexity. This differs from the deterministic framework for constrained DC optimization of \citet{sun2023algorithms}. We next present an estimated convergence rate of MoSSP-P to guide parameter selection.
\begin{lemma}
    \label{Lemma: the estimation of the convergence rate of Polyak for infeasible stationary point} 
    Suppose Assumptions \ref{Ass: boundedness of c and f} and \ref{Ass: unbiased gradient oracle} hold. Let $\{\bw^{k}\}_{k \in \NN}$ be the sequence generated by MoSSP-P
    with the parameters satisfying
\begin{equation}
    \label{Eq: the parameter conditions for Polyak in main body}
\begin{aligned}
\rho_{k} &\equiv \calO( K^{l}), \quad
\mu_{k}\equiv  \calO(K^{{-\tau}}), \\
\alpha_{k} &\equiv  \calO(K^{{-\tau}}), \quad
0 < \beta \leq 1,
\end{aligned}
\end{equation}
where $0 < l \leq \tau \leq 2 l < 1$ is independent of $K$.
Then, it holds that
{\small
\begin{equation}
    \label{Eq: convergence rate of polyak}
\begin{aligned}
&\max\{ \EE [\| \bu^{R+1}\|^{2}],
\EE[\| \bx^{R+1}- \prox_{\mu_R g}(\bz^{R})\|^{2}]\} \\
&\quad = \calO\!\left(
\max\left\{
\begin{gathered}
K^{\tau -1}\left( \calL_{\rho, \mu}(\bw^{0})+1\right),\\
K^{\tau - 1}\EE [\| \be^{0}\|^{2}],\quad
K^{-\tau}
\end{gathered}
\right\}
\right), \\
&\EE [\|\nabla \bc(\bx^{R+1})\bc(\bx^{R+1})\|^{2}] \\
&\quad = \calO\!\left(
\max\left\{
\begin{gathered}
K^{-2l + \tau -1}\left( \calL_{\rho, \mu}(\bw^{0}) + 1 \right),\\
K^{-2l + \tau -1 }\EE [\| \be^{0}\|^{2}],\quad
K^{-2l}
\end{gathered}
\right\}
\right).
\end{aligned}
\end{equation}}
\end{lemma}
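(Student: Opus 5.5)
The plan is to run a descent argument on the potential $\calL_{\rho,\mu}(\bw)$, with the constant parameters $\rho_k\equiv\rho$ and $\mu_k\equiv\mu$. The momentum error $\be^k:=\bs^k-\nabla f(\bx^k)$ is handled by a coupled Lyapunov term. The argument has four steps.

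\textbf{Step 1 (one-step descent in $\bx$).} We have $\bx^{k+1}=\prox_{\mu h}(\bz^k-\mu\bS^k)$, and $Q_\rho$ is $L_\rho$-smooth with $L_\rho=\rho\tilde L=\calO(K^l)$. Choose the constant in $\mu$ so that $\mu L_\rho\le 1/4$; this is possible since $\tau\ge l$. Then the strong convexity of the prox subproblem (modulus $1/\mu$) together with the descent lemma gives
\begin{align*}
\calL_{\rho,\mu}(\bx^{k+1},\bz^k)\le \calL_{\rho,\mu}(\bx^k,\bz^k)-\tfrac{1}{4\mu}\|\bx^{k+1}-\bx^k\|^2+c\,\mu\|\be^k\|^2 .
\end{align*}
Here the cross term $\langle \nabla Q_\rho(\bx^k)-\bS^k,\bx^{k+1}-\bx^k\rangle=-\langle\be^k,\bx^{k+1}-\bx^k\rangle$ is absorbed by Young's inequality.

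\textbf{Step 2 (descent in $\bz$).} The map $\bz\mapsto \frac{1}{2\mu}\|\bx^{k+1}-\bz\|^2-\calM_{\mu g}(\bz)$ has gradient $\mu^{-1}(\prox_{\mu g}(\bz)-\bx^{k+1})$. It is smooth, with Lipschitz constant of order $1/\mu$ because $\prox_{\mu g}$ is nonexpansive. Moreover, it is convex in $\bz$, since $\frac{1}{2\mu}\|\bz\|^2-\calM_{\mu g}(\bz)$ is the conjugate-type convex function $\frac{1}{\mu}(\text{something convex})$. The update \eqref{eq:bz_update} is a gradient step of length $\beta\mu$ on this map, so for $\beta\le1$ we get a decrease of $\tfrac{\beta}{2\mu}\|\prox_{\mu g}(\bz^k)-\bx^{k+1}\|^2$.

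\textbf{Step 3 (momentum error recursion).} This follows the standard Polyak-momentum analysis. Write
\begin{align*}
\be^{k+1}=(1-\alpha)\be^k+(1-\alpha)\bigl(\nabla f(\bx^k)-\nabla f(\bx^{k+1})\bigr)+\alpha\bigl(\nabla\tf(\bx^{k+1},\xi^{k+1})-\nabla f(\bx^{k+1})\bigr).
\end{align*}
Using unbiasedness, this yields
\begin{align*}
\EE\|\be^{k+1}\|^2\le(1-\alpha)\EE\|\be^k\|^2+\tfrac{L_f^2}{\alpha}\EE\|\bx^{k+1}-\bx^k\|^2+\alpha^2\sigma^2 .
\end{align*}
Add $\frac{c'\mu}{\alpha}\|\be^k\|^2$ to $\calL$. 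With $\mu\asymp\alpha\asymp K^{-\tau}$, the $\|\bx^{k+1}-\bx^k\|^2$ coefficient $\frac{\mu L_f^2}{\alpha^2}=\calO(K^{\tau})$ is dominated by $\frac{1}{4\mu}\asymp K^{\tau}$ after adjusting constants. The noise term $\frac{\mu}{\alpha}\alpha^2\sigma^2=\calO(\alpha\mu)$ is left over. I expect this balancing of constants (an $L_f^2/\alpha^2$ versus $1/\mu$ comparison) to be the delicate point. If it fails, I would take $\alpha$ a constant multiple larger than $\mu$.

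\textbf{Step 4 (telescoping and measures).} Summing over $k$ and using that $\calL$ is bounded below (Appendix~\ref{Sub: constructed potential function}) gives
\begin{align*}
\tfrac1K\sum_k \EE\Bigl[\tfrac{1}{\mu}\|\bx^{k+1}-\bx^k\|^2+\tfrac{\beta}{\mu}\|\prox_{\mu g}(\bz^k)-\bx^{k+1}\|^2\Bigr]\lesssim \frac{\calL(\bw^0)+1+\EE\|\be^0\|^2}{K}+\alpha\mu\sigma^2 .
\end{align*}
Multiplying by $\mu=K^{-\tau}$ bounds $\EE\|\bx^{R+1}-\prox_{\mu g}(\bz^R)\|^2$ by $K^{-\tau-1}$ terms and $\calO(K^{-3\tau})$, which is within the claim.

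For $\bu^{R+1}$ we invoke \Cref{Lemma: criticality-stationarity-measure}, which bounds $\|\bu^{k+1}\|\le L_\rho\|\bx^{k+1}-\bx^k\|+\|\be^k\|+\mu^{-1}\|\prox_{\mu g}(\bz^k)-\bx^{k+1}\|$. The bound on $\frac{1}{\mu^2}\|\cdot\|^2$ gives the $K^{\tau-1}$ terms, since $\frac1{\mu}\cdot\frac{1}{K}=K^{\tau-1}$. The summed $\EE\|\be^k\|^2$, from the recursion, is $\calO(\frac{1}{\alpha K}(\ldots)+\alpha\sigma^2)=\calO(K^{\tau-1}+K^{-\tau})$. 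The factor $L_\rho^2\le K^{2l}\le K^{\tau}\cdot K^{l}$ is absorbed by taking $\mu L_\rho$ small, because $\tau\ge l$.

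Finally, for feasibility, the prox optimality condition gives $\rho\nabla\bc\,\bc\in\bu^{k+1}-\nabla f-\partial h+\partial g$. Together with \Cref{Ass: boundedness of c and f} this yields $\|\nabla\bc(\bx^{R+1})\bc(\bx^{R+1})\|\le\rho^{-1}(\|\bu^{R+1}\|+3G)$. Squaring and using $\rho^{-2}=K^{-2l}$ gives the stated $K^{-2l}$ bounds. The condition $\tau\le 2l$ is what keeps these terms consistent with the first estimate.
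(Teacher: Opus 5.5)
Your proposal is correct and follows essentially the paper's argument. It uses the same potential $\calL_{\rho,\mu}$ with an $\bx$-step/$\bz$-step descent (\Cref{Lemma: approximate-sufficient-descent}) and the same Polyak error recursion (\Cref{Lemma: Recursive gradient error with Polyak momentum}) folded into a Lyapunov function. It then finishes with \Cref{Lemma: criticality-stationarity-measure} and $\|\nabla\bc(\bx^{k+1})\bc(\bx^{k+1})\|\le\rho^{-1}(\|\bu^{k+1}\|+3G)$.

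The balancing you flag resolves exactly as you suggest: the paper fixes $\alpha/\mu=2\gamma$ with $\gamma\ge\max\{1,8L_f^2\}$, so your weight $c'\mu/\alpha$ on $\|\be^k\|^2$ is just a constant and equivalent to the paper's unit weight. Also, the condition $\tau\le 2l$ is not used for these bounds, in your argument or the paper's.
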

Note that \eqref{Eq: convergence rate of polyak} indicates that the term $\frac{\rho}{2}\|\bc(\bx^{0})\|^{2}$ in $\calL_{\rho, \mu}(\bw^{0})$ may dominate the complexity due to $\rho = \rho_0 K^{l}$. Motivated by \citet{xie2021complexity,jin2022stochastic}, we employ an approximately feasible initialization to mitigate this overhead. Let $\bx^{0}$ be an approximately feasible point satisfying $\|\bc(\bx^{0})\|^{2} = \calO(K^{-l})$ for some $0<l<1$. In practice, such an initialization can be produced by the two-phase procedure proposed by \citet{jin2025stochastic} for weakly convex minimization with nonconvex constraints, where the first phase yields an approximately feasible point that warm-starts the second.
The main complexity result is presented below, with detailed analysis deferred to Appendix~\ref{Subsection: Lemmas for Oracle complexity analysis of Algo 1}.
\begin{theorem}
    \label{Theorem: the convergence rate of polyak with initial feasibility}
Suppose Assumptions \ref{Ass: boundedness of c and f}-\ref{Ass: constraints conditions} hold. Let $\{\bw^k\}_{k \in \NN}$ be the sequence generated by MoSSP-P
with parameters satisfying 
\begin{align}
    \label{Eq: the parameter conditions in Polyak}
    &\rho_{k} \equiv \rho = {\rho_0} K^{1/4}, \quad \mu_{k} \equiv \mu = \frac{\mu_0}{K^{1/2}\max\{L_f, \tilde{L}\}}, \notag \\
    &\alpha_{k} \equiv \alpha = \frac{\alpha_0 \mu_0}{K^{1/2}}, \quad 0 < \beta \leq 1,
\end{align}
where $\rho_0, \mu_0, \alpha_0 > 0$ are constants independent of $K$. If $\| \bc (\bx^{0}) \|^{2}= \calO(K^{-1/4})$, then it holds that
{
    \begin{equation}
    \label{Eq: the convergence rate of Polyak}
\begin{aligned}
&\max\bigl\{ \EE[\|\bu^{R+1}\|^{2}],
\EE[\|\bx^{R+1}-\prox_{\mu g}(\bz^{R})\|^{2}] \bigr\} \\
&\quad = \calO(K^{-1/2}), \\
&\EE\bigl[\|\bc(\bx^{R+1})\|^{2}\bigr]
= \calO(K^{-1/2}).
\end{aligned}
    \end{equation}
}{Consequently, given $\varepsilon > 0$, the oracle complexity of MoSSP-P to obtain a stochastic $\varepsilon$-KKT point $\bx^{R+1}$ satisfying \eqref{Eq: approximate KKT point} is of order $\calO(\varepsilon^{-4})$. In the absence of \Cref{Ass: constraints conditions}, we have $\EE\bigl[\|\nabla \bc(\bx^{R+1})\,\bc(\bx^{R+1})\|^{2}\bigr] = \calO(K^{-1/2})$, and the oracle complexity of MoSSP-P to obtain a stochastic $\varepsilon$-stationary point $\bx^{R+1}$ satisfying \eqref{Eq: approximate stationarity point} is of order $\calO(\varepsilon^{-4})$.}
\end{theorem}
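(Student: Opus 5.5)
The plan is to specialize \Cref{Lemma: the estimation of the convergence rate of Polyak for infeasible stationary point} to $l = 1/4$ and $\tau = 1/2$. These exponents satisfy $0<l\le\tau\le 2l<1$, since $1/4\le 1/2\le 1/2<1$. The choices $\rho=\rho_0K^{1/4}$, $\mu=\mu_0K^{-1/2}/\max\{L_f,\tilde L\}$ and $\alpha=\alpha_0\mu_0K^{-1/2}$ then fall under \eqref{Eq: the parameter conditions for Polyak in main body}.

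The lemma gives criticality residuals of order
\[
\max\{K^{-1/2}(\calL_{\rho,\mu}(\bw^0)+1),\; K^{-1/2}\EE\|\be^0\|^2,\; K^{-1/2}\}.
\]
It gives infeasible-stationarity residuals of order
\[
\max\{K^{-1}(\calL_{\rho,\mu}(\bw^0)+1),\; K^{-1}\EE\|\be^0\|^2,\; K^{-1/2}\}.
\]
So the first step is to show that $\calL_{\rho,\mu}(\bw^0)$ and $\EE\|\be^0\|^2$ are $\calO(1)$, uniformly in $K$.

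For the momentum error, I expect $\be^0=\bs^0-\nabla f(\bx^0)=\nabla\tf(\bx^0,\xi^0)-\nabla f(\bx^0)$. \Cref{Ass: unbiased gradient oracle} then gives $\EE\|\be^0\|^2\le\sigma^2$.

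For the potential, use $\bx^0=\bz^0$, so the quadratic coupling term vanishes. This leaves
\[
\calL_{\rho,\mu}(\bw^0)=f(\bx^0)+h(\bx^0)+\tfrac{\rho}{2}\|\bc(\bx^0)\|^2-\calM_{\mu g}(\bx^0).
\]
The penalty term is $\rho_0K^{1/4}\cdot\calO(K^{-1/4})=\calO(1)$ by the approximate-feasibility initialization. For the remaining term, $\calM_{\mu g}(\bx^0)\ge g(\bx^0)-\mu G^2/2$ because $g$ is $G$-Lipschitz. Hence $-\calM_{\mu g}(\bx^0)\le -g(\bx^0)+\calO(1)$, and $\calL_{\rho,\mu}(\bw^0)\le F(\bx^0)+\calO(1)$.

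One caveat: the lemma's bounds are really expressed relative to a lower bound of $\calL$, which is $F^*$ minus $\calO(1)$ terms (Appendix~\ref{Sub: constructed potential function}). These constants are independent of $K$ because $\mu\le 1$ eventually and $\rho$ enters only through nonnegative terms. This yields both criticality bounds at $\calO(K^{-1/2})$, and in the absence of \Cref{Ass: constraints conditions} it gives $\EE\|\nabla\bc(\bx^{R+1})\bc(\bx^{R+1})\|^2=\calO(K^{-1/2})$ directly.

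Next comes the feasibility bound. \Cref{Ass: constraints conditions} holds along the iterates, so
\[
\|\bc(\bx^{R+1})\|^2\le\delta^{-2}\|\nabla\bc(\bx^{R+1})\bc(\bx^{R+1})\|^2.
\]
Taking expectations gives $\EE\|\bc(\bx^{R+1})\|^2=\calO(K^{-1/2})$.

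The remaining task is to convert the residuals into the KKT form of \Cref{Def: the definition of approximate point}. Take $\bar\bx=\bx^{R+1}$, $\bar\by=\prox_{\mu g}(\bz^R)$, $\bar\bu=\bu^{R+1}$ and $\bar\blambda=\rho\,\bc(\bx^{R+1})$. The inclusion \eqref{Def: the definition of u} then follows from the stated fact that $\bu^{k+1}\in\partial\psi_{\rho_k}(\bx^{k+1})-\partial g(\prox_{\mu_k g}(\bz^k))$, since $\nabla Q_\rho=\nabla f+\rho\nabla\bc\,\bc$.

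For the complexity count, setting $K^{-1/2}\lesssim\varepsilon^2$ gives $K=\calO(\varepsilon^{-4})$. Each iteration uses one stochastic gradient, so the oracle complexity is $\calO(\varepsilon^{-4})$.

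The main obstacle is step one: verifying that the hidden constants in the lemma, including the lower bound of $\calL_{\rho,\mu}$ and any factors involving $\rho\mu$ or $L_\rho\mu$, stay bounded. Here $L_\rho\mu=\rho_0\tilde L\mu_0K^{-1/4}/\max\{L_f,\tilde L\}\to0$, which is what makes the choice of $\mu$ work. I would check this by tracing the constants in the lemma's proof.
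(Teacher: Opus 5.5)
Your proposal is correct and follows the paper's own proof: specialize Lemma~\ref{Lemma: the estimation of the convergence rate of Polyak for infeasible stationary point} with $l=1/4$, $\tau=1/2$, use the approximately feasible initialization and $\EE\|\be^0\|^2\le\sigma^2$ to make the initial terms $\calO(1)$, and then pass from infeasible stationarity to feasibility via Assumption~\ref{Ass: constraints conditions}. Your explicit bounds on $\calL_{\rho,\mu}(\bw^0)$ and $\EE\|\be^0\|^2$ fill in details the paper leaves implicit; the only refinement to your ``obstacle'' check is that, besides $\mu L_\rho\to 0$, the Polyak coupling conditions $\alpha\ge 2\mu$ and $L_\rho+2L_f^2/\alpha\le \tfrac{3}{8\mu}$ must hold, and these constrain $\alpha_0$ at a constant level that does not improve as $K$ grows (the paper takes $\alpha_0=2\gamma/\max\{L_f,\tilde L\}$ with $\gamma\ge\max\{1,8L_f^2\}$).
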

The result in \Cref{Theorem: the convergence rate of polyak with initial feasibility} matches that of unconstrained stochastic first-order methods (SFOMs) for Problem \eqref{Prob: primal problem with DC in one block} with \(\bc = \zero\) and \(g = 0\)~\cite{ghadimi2013stochastic,gao2024non}, as well as SFOMs for the unconstrained stochastic DC case~\cite{chayti2025stochastic,hu2024single,xu2019stochastic}.
\begin{remark}\label{Remark: role of momentum}
Existing analyses of penalty methods, including ALM analyses for nonconvex constrained problems, require penalty parameters to grow inversely with $\varepsilon$ to attain the best-known complexity. This worsens the problem conditioning, as the smoothness constant grows as $L_{\rho} = \Theta(\rho)$. Without momentum, one must compensate with a smaller stepsize or a larger batch size, potentially yielding complexity no better than $\calO(\varepsilon^{-5})$~\cite{jin2022stochastic}; in particular, a naive extension of unconstrained stochastic DC methods (e.g., \cite{hu2024single,xu2019stochastic}) to the penalty framework for Problem~\eqref{Prob: primal problem with DC in one block} risks such degradation. Under our penalty framework, momentum yields a genuine complexity
improvement by allowing the penalty, stepsize, and momentum parameters to be jointly calibrated under $\calO(1)$ batch size, thereby achieving the rate stated above at no additional cost from the nonconvex constraints.
\end{remark}

The oracle complexity of MoSSP-P without an approximately feasible initialization is presented as follows.
\begin{corollary}
    \label{Theorem: the basic convergence rate of Polyak without initial feasibility}
     Under the assumptions of \Cref{Theorem: the convergence rate of polyak with initial feasibility} with parameters set as $\rho = \calO(K^{1/5})$, $\mu = \calO(K^{-2/5})$, $\alpha = \calO(K^{-2/5})$, and $0 < \beta \leq 1$, the oracle complexity of MoSSP-P
     to obtain a stochastic $\varepsilon$-KKT point or a stochastic $\varepsilon$-stationary point (without \Cref{Ass: constraints conditions}) $\bx^{R+1}$ is of order $\calO(\varepsilon^{-5})$.
\end{corollary}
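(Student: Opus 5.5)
The plan is to specialize \Cref{Lemma: the estimation of the convergence rate of Polyak for infeasible stationary point} to the exponents $l=1/5$ and $\tau=2/5$. We then bound the initial potential crudely, since no approximately feasible initialization is available. First we check admissibility: $0<l=1/5\le \tau=2/5\le 2l=2/5<1$, so the parameter choice $\rho=\calO(K^{1/5})$, $\mu=\calO(K^{-2/5})$, $\alpha=\calO(K^{-2/5})$, $0<\beta\le 1$ satisfies \eqref{Eq: the parameter conditions for Polyak in main body}.

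\textbf{Step 1: initial quantities.} Since $\bx^0=\bz^0$,
\[
\calL_{\rho,\mu}(\bw^0)=f(\bx^0)+h(\bx^0)+\tfrac{\rho}{2}\|\bc(\bx^0)\|^2-\calM_{\mu g}(\bx^0).
\]
The $G$-Lipschitz continuity of $g$ (bounded subgradients) gives $\calM_{\mu g}(\bx^0)\ge g(\bx^0)-\mu G^2/2$. Combined with $\|\bc\|\le C$, this yields
\[
\calL_{\rho,\mu}(\bw^0)\le F(\bx^0)+\tfrac{\mu G^2}{2}+\tfrac{\rho C^2}{2}=\calO(K^{1/5}).
\]
This is the key difference from \Cref{Theorem: the convergence rate of polyak with initial feasibility}: without the assumption on $\bc(\bx^0)$, the penalty term is of order $\rho$. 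The error $\be^0=\bs^0-\nabla f(\bx^0)=\nabla\tf(\bx^0,\xi^0)-\nabla f(\bx^0)$ satisfies $\EE\|\be^0\|^2\le\sigma^2$ by \Cref{Ass: unbiased gradient oracle}.

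\textbf{Step 2: plug into \eqref{Eq: convergence rate of polyak}.} For the criticality measure the bound is
\[
\max\{K^{\tau-1}K^{l},\,K^{\tau-1},\,K^{-\tau}\}=\max\{K^{-2/5},K^{-3/5},K^{-2/5}\}=\calO(K^{-2/5}).
\]
For infeasible stationarity the bound is
\[
\max\{K^{-2l+\tau-1}K^{l},\,K^{-2l}\}=\max\{K^{-4/5},K^{-2/5}\}=\calO(K^{-2/5}).
\]
These exponents were chosen precisely to balance the terms: in general the criticality bound is $\max\{K^{\tau+l-1},K^{-\tau}\}$ subject to $\tau\le 2l$, and $l=1/5$, $\tau=2/5$ optimizes it. Under \Cref{Ass: constraints conditions}, we get
\[
\EE\|\bc(\bx^{R+1})\|^2\le\delta^{-2}\EE\|\nabla\bc(\bx^{R+1})\bc(\bx^{R+1})\|^2=\calO(K^{-2/5}).
\]

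\textbf{Step 3: certificate and count.} Set $\bar\bx=\bx^{R+1}$, $\bar\by=\prox_{\mu g}(\bz^R)$, $\bar\bu=\bu^{R+1}$, and $\bar\blambda=\rho\,\bc(\bx^{R+1})$. Since $\nabla Q_\rho=\nabla f+\rho\nabla\bc\,\bc$, the inclusion $\bu^{R+1}\in\partial\psi_\rho(\bx^{R+1})-\partial g(\bar\by)$ (from the appendix) becomes exactly \eqref{Def: the definition of u}. Hence all residuals in \eqref{Eq: approximate KKT point} and \eqref{Eq: approximate stationarity point} are $\calO(K^{-2/5})$. Requiring $K^{-2/5}\lesssim\varepsilon^2$ gives $K=\calO(\varepsilon^{-5})$. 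Each iteration uses one sample $\xi^k$, so the oracle complexity is $\calO(\varepsilon^{-5})$, which proves the corollary in both cases.

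The only real care point is Step 1. We must make sure the hidden constants in the lemma depend on $\calL_{\rho,\mu}(\bw^0)$ only through the displayed linear factor, and not, say, through $\rho\|\bc(\bx^0)\|^2$ elsewhere. We also need the lower bound $F^*$ to control $\calL$ from below uniformly in $\rho$ and $\mu$, and the lemma's statement already absorbs this.
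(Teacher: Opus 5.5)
Your proposal is correct and follows essentially the same route as the paper's own proof: you specialize \Cref{Lemma: the estimation of the convergence rate of Polyak for infeasible stationary point} with $l=1/5$ and $\tau=2/5$, use $\calL_{\rho,\mu}(\bw^0)=\calO(K^{l})$ and $\EE[\|\be^0\|^2]=\calO(1)$ to get $\calO(K^{-2/5})$ for both residuals, and conclude $K=\calO(\varepsilon^{-5})$ with one oracle call per iteration. Your explicit estimate $\calL_{\rho,\mu}(\bw^0)\le F(\bx^0)+\mu G^2/2+\rho C^2/2$ (via $\bx^0=\bz^0$ and the $G$-Lipschitz bound on the Moreau envelope of $g$) supplies a derivation for a step the paper only asserts.
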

\begin{remark}
\Cref{Theorem: the basic convergence rate of Polyak without initial feasibility} highlights the role of approximate feasible initialization in improving the complexity order. Since the schedule $\rho = \calO(K^{l})$ is required to control $\mathbb{E}\bigl[\|\nabla \bc(\bx^{k}) \bc(\bx^{k})\|^2\bigr]$, it inflates $\tfrac{\rho}{2}\|\bc(\bx^{0})\|^{2}$ to $\calO(K^{l})$ when $\|\bc(\bx^{0})\|^{2} = \calO(1)$, dominating the bound in \eqref{Eq: convergence rate of polyak}. Such initialization confines this term to $\calO(1)$, thereby improving the complexity order. In fact, our analysis can also be extended to an iteration-indexed penalty update \cite{Alacaoglu2023ComplexityOS} without this requirement.
\end{remark}

\subsection{MoSSP-R: MoSSP with Recursive Momentum}
\label{Subsection: Recursive-momentum based algorithm and its main complexity results}
Alternatively, we incorporate another variance reduction technique, recursive momentum (STORM) \cite{cutkosky2019momentum}, into the \(\bx\)-update. STORM introduces a correction term based on a single sample to generate a variance-reduced gradient estimate, making it suitable for large-scale applications.

At iteration \(k\), we construct a recursive momentum-based stochastic estimator for \(\nabla Q_{\rho_k}(\bx^k)\) as: 
\begin{align}
    \label{Eq: gradient estimator storm}
    \bD^k = \bd^k + \rho_k \nabla \bc(\bx^{k}) \bc(\bx^{k}),
\end{align}
where \(\bd^k\) is updated recursively by
\begin{equation}
\label{Eq: the storm estimator}
\bd^k =
\begin{cases}
\frac{1}{|\calB_0|} \sum_{j \in \calB_0}
\nabla \tf(\bx^0,\xi_j^0), & k = 0, \\[3pt]
\begin{aligned}[t]
\nabla \tf(\bx^k, \xi^k)
&+ (1 - \alpha_{k-1}) \\
&\quad \left( \bd^{k-1} - \nabla \tf(\bx^{k-1},\xi^k) \right),
\end{aligned} & k \geq 1.
\end{cases}
\end{equation}

We begin at $k=0$ by independently drawing a sample set $\{\xi_j^0\}_{j\in\mathcal{B}_0}$ from the distribution $\Xi$, where the initial batch size is $|\mathcal{B}_0|=b_0$. For any \(k\ge1\), a single i.i.d. sample \(\xi^k\) is drawn from \(\Xi\). We then update \(\bx^{k+1}\) through
\begin{align}
\label{eq:bx_update_storm}
    \bx^{k+1} 
    = \prox_{\mu_k h}\bigl(\bz^k - \mu_k \bD^{k}\bigr).
\end{align}
The overall algorithm, termed \textbf{MoSSP-R}, is summarized in \Cref{alg:mossp-r}.

\begin{algorithm}[t]
    \caption{\textbf{MoSSP-R}: \textbf{S}ingle-Loop \textbf{S}tochastic \textbf{P}enalty Algorithm with \textbf{R}ecursive \textbf{Mo}mentum}
    \label{alg:mossp-r}
    \begin{algorithmic}
        \STATE {\bfseries Input:} maximum number of iterations $K$, initial point $\bx^{0} = \bz^{0} \in \RR^{n}$, a sequence $\{\alpha_k\}\subset(0,1)$, positive parameters $\mu_k$, $\rho_k$, $\beta$.
        \FOR{$k = 0, 1, 2,\ldots K-1 $}
        \STATE Sample $\xi^k$ from $\Xi$ and compute $\bD^{k}$ from \eqref{Eq: gradient estimator storm}.
        \STATE Compute $\bx^{k+1}$ using \eqref{eq:bx_update_storm}.
        \STATE Compute $\bz^{k+1}$ using \eqref{eq:bz_update}.
        \ENDFOR
        \STATE {\bfseries Output:} $\bx^{R+1}$, where $R \in \{0,1,\ldots,K-1\}$ is uniformly chosen.
    \end{algorithmic}
\end{algorithm}
\noindent\textbf{Oracle Complexity of MoSSP-R.}\; We now present complexity results for MoSSP-R for finding an approximate stationary point and an approximate KKT point, respectively, with detailed analysis deferred to Appendix~\ref{Subsection: Lemmas for Oracle complexity analysis of Algo 2}. In MoSSP-R, we construct $\bu^{k+1}$ as:
{
\begin{equation}
\label{Eq: u-define-stochastic-storm}
\begin{aligned}
\bu^{k+1}
&= {\nabla Q_{\rho_k}(\bx^{k+1}) - \bD^k} + \mu_k^{-1}(\prox_{\mu_k g}(\bz^k)- \bx^{k+1}).
\end{aligned}
\end{equation}}

Before proceeding, we introduce an additional standard assumption, also known as the mean-squared smoothness assumption, widely used in variance reduction methods \cite{nguyen2017sarah,fang2018spider,cutkosky2019momentum,xu2023momentum}.
\begin{assumption}
    \label{Ass: the expected smoothness of f} 
    For almost every $\xi \in \Xi$, $\tf(\cdot, \xi)$ is differentiable and 
    for all $\bx, \by \in \RR^{n}$,
    \[
        \EE_{\xi}\left[\left\|\nabla \tf(\bx; \xi) - \nabla \tf(\by; \xi)\right\|^{2}
        \right] \leq L_f^{2}\|\bx - \by\|^{2}.
    \]
\end{assumption}  
By Jensen's inequality, Assumption~\ref{Ass: the expected smoothness of f} implies that 
\(f=\EE_{\xi}[\tf(\cdot;\xi)]\) is \(L_f\)-smooth, whereas the converse does not hold in general.

As noted by \citet{xu2023momentum}, employing a moderately large initial batch size in variance reduction can improve the complexity order at a negligible cost. Consequently, we
establish the oracle complexity with an initial batch size $b_0 = \calO(K^{1/3})$.
\begin{theorem}
    \label{Theorem: the convergence rate of storm with initial feasibility}
Suppose Assumptions \ref{Ass: boundedness of c and f}-\ref{Ass: constraints conditions} and \ref{Ass: the expected smoothness of f} hold. Let $\{\bw^k\}_{k \in \NN}$ be the sequence generated by MoSSP-R with parameters satisfying
\begin{align}
\rho_{k} &\equiv \rho = {\rho_0} K^{1/3}, \quad \mu_{k} \equiv \mu= \frac{\mu_0}{K^{1/3}\max\{L_f, \tilde{L}\}},\notag \\
\alpha_{k} &\equiv \alpha =\frac{16\alpha_0 \mu_0^{2}}{K^{2/3}},
\quad 0 < \beta \leq 1,
\label{Eq: the parameter conditions in VR for theorem}
\end{align}
where $\rho_0, \mu_0, \alpha_0 > 0$ are constants independent of $K$. If $\| \bc (\bx^{0}) \|^{2}= \calO(K^{-1/3})$ with the initial batch size $b_0 = \calO(K^{1/3})$, then it holds that
\begin{equation}
\begin{aligned}
&\max\bigl\{ \EE[\|\bu^{R+1}\|^{2}],
\EE[\|\bx^{R+1}-\prox_{\mu g}(\bz^{R})\|^{2}] \bigr\} \\
&\quad = \calO(K^{-2/3}), \\
&\EE\bigl[\|\bc(\bx^{R+1})\|^{2}\bigr]
= \calO(K^{-2/3}).
\end{aligned}
    \end{equation}
Consequently, given $\varepsilon > 0$, the oracle complexity of MoSSP-R to obtain a stochastic $\varepsilon$-KKT point $\bx^{R+1}$ satisfying \eqref{Eq: approximate KKT point} is of order $\calO(\varepsilon^{-3})$. In the absence of \Cref{Ass: constraints conditions}, we have $\EE\bigl[\|\nabla \bc(\bx^{R+1})\,\bc(\bx^{R+1})\|^{2}\bigr] = \calO(K^{-2/3})$, and the oracle complexity of MoSSP-R to obtain a stochastic $\varepsilon$-stationary point $\bx^{R+1}$ satisfying \eqref{Eq: approximate stationarity point} is of order $\calO(\varepsilon^{-3})$.
\end{theorem}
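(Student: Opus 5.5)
The plan follows the MoSSP-P analysis, with the STORM error recursion replacing the Polyak one. Throughout, $\be^k:=\bd^k-\nabla f(\bx^k)$ denotes the estimator error, so that $\bD^k-\nabla Q_{\rho}(\bx^k)=\be^k$.

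\textbf{Step 1: one-step descent of the potential.} I will derive a one-step inequality for $\calL_{\rho,\mu}(\bw^k)$ with constant $\rho,\mu$.
\begin{itemize}
\item The $\bx$-step is a proximal-gradient step on $Q_\rho(\bx)+h(\bx)+\frac{1}{2\mu}\|\bx-\bz^k\|^2$. Since $\mu L_\rho\le \mu_0\rho_0 K^{0}$ is bounded by a constant (this is why $\mu\propto K^{-1/3}$ and $\rho\propto K^{1/3}$ are matched), the usual $L_\rho$-smoothness argument gives a decrease of order $\frac{1}{\mu}\|\bx^{k+1}-\bx^k\|^2$, up to the cross term $\langle \be^k,\bx^{k+1}-\bx^k\rangle$. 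Young's inequality bounds that cross term by $\mu\|\be^k\|^2$ plus a fraction of the decrease.
\item The $\bz$-step: $\bz\mapsto\frac{1}{2\mu}\|\bx^{k+1}-\bz\|^2-\calM_{\mu g}(\bz)$ is $\frac1\mu$-smooth with gradient $\mu^{-1}(\prox_{\mu g}(\bz)-\bx^{k+1})$. With $\beta\le1$ this gives a decrease of order $\frac{\beta}{\mu}\|\prox_{\mu g}(\bz^k)-\bx^{k+1}\|^2$.
\end{itemize}
Combining the two steps yields
\[
\EE\calL(\bw^{k+1})\le \EE\calL(\bw^k)-c\mu^{-1}\EE\bigl[\|\bx^{k+1}-\bx^k\|^2+\|\bx^{k+1}-\prox_{\mu g}(\bz^k)\|^2\bigr]+C\mu\EE\|\be^k\|^2 .
\]

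\textbf{Step 2: STORM error recursion.} Using Assumption~\ref{Ass: the expected smoothness of f} and unbiasedness,
\[
\EE\|\be^{k+1}\|^2\le(1-\alpha)^2\EE\|\be^k\|^2+2L_f^2\EE\|\bx^{k+1}-\bx^k\|^2+2\alpha^2\sigma^2 .
\]
I then add $\frac{\gamma}{\alpha}\|\be^k\|^2$ to the potential with $\gamma\asymp\mu$. This absorbs the $C\mu\|\be^k\|^2$ term. The extra $\frac{\mu L_f^2}{\alpha}\|\bx^{k+1}-\bx^k\|^2$ term is dominated by $\mu^{-1}\|\bx^{k+1}-\bx^k\|^2$ provided $\mu^2L_f^2\lesssim\alpha$, which is exactly the choice $\alpha=16\alpha_0\mu_0^2K^{-2/3}$. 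The residual noise per step is $\mu\alpha\sigma^2$.

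\textbf{Step 3: telescoping.} Summing over $k$ and using that $\calL$ is bounded below gives
\[
\frac1K\sum_k\EE\bigl[\|\bx^{k+1}-\bx^k\|^2+\|\bx^{k+1}-\prox_{\mu g}(\bz^k)\|^2\bigr]\lesssim \frac{\mu(\calL(\bw^0)-\calL_*)}{K}+\frac{\mu^2}{\alpha K}\EE\|\be^0\|^2+\mu^2\alpha\sigma^2 .
\]
The three inputs are:
\begin{itemize}
\item $\calL(\bw^0)=\calO(1)$, since $\frac\rho2\|\bc(\bx^0)\|^2=\calO(1)$ by the initialization $\|\bc(\bx^0)\|^2=\calO(K^{-1/3})$;
\item $\EE\|\be^0\|^2\le\sigma^2/b_0=\calO(K^{-1/3})$;
\item a lower bound $\calL_*$ uniform in $\rho,\mu$, from $F^*$ and the bounded subgradients of $g$ (Appendix lemma).
\end{itemize}
Dividing by $\mu^2$ gives residuals of order $K^{-2/3}$.

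\textbf{Step 4: criticality measure.} By Lemma~\ref{Lemma: criticality-stationarity-measure}, $\|\bu^{k+1}\|\lesssim L_\rho\|\bx^{k+1}-\bx^k\|+\|\be^k\|+\mu^{-1}\|\prox_{\mu g}(\bz^k)-\bx^{k+1}\|$. Since $L_\rho\lesssim\mu^{-1}$ and $\frac1K\sum\EE\|\be^k\|^2$ is bounded by the same telescoped quantity, this gives $\EE\|\bu^{R+1}\|^2=\calO(K^{-2/3})$. The other criticality term $\EE\|\bx^{R+1}-\prox_{\mu g}(\bz^R)\|^2$ is directly $\calO(\mu^2K^{-2/3})$, hence even smaller.

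\textbf{Step 5: feasibility, the main obstacle.} The inclusion defining $\bu^{k+1}$ gives
\[
\rho\nabla\bc(\bx^{k+1})\bc(\bx^{k+1})=\bu^{k+1}-\nabla f(\bx^{k+1})-\bv_h+\bv_g .
\]
With the bounds $G$ this yields $\|\nabla\bc\,\bc\|\le(\|\bu^{k+1}\|+3G)/\rho$, which is only $\calO(K^{-2/3})$ in squared norm. That suffices for the stationary-point claim without Assumption~\ref{Ass: constraints conditions}. It is not obviously enough for $\|\bc\|$ via $\delta$, because the output iterate is $\bx^{R+1}$ rather than an iterate covered by the assumption. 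I would handle this by applying Assumption~\ref{Ass: constraints conditions} at index $R+1$, which is in the generated sequence, to get $\|\bc(\bx^{R+1})\|^2\le\delta^{-2}\|\nabla\bc\,\bc\|^2=\calO(K^{-2/3})$.

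\textbf{Oracle count.} The total number of oracle calls is $K+b_0=\calO(K)$. Setting $K^{-2/3}=\varepsilon^2$ gives $\calO(\varepsilon^{-3})$.
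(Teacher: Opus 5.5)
Your proposal is correct and follows essentially the paper's route: the same potential $\calL_{\rho,\mu}$ and its one-step descent, the same STORM error recursion, the residual bound of \Cref{Lemma: criticality-stationarity-measure}, and the infeasibility bound $\|\nabla\bc\,\bc\|^2\le 2(\|\bu\|^2+9G^2)/\rho^2$. The only difference is bookkeeping: you fold $\frac{\gamma}{\alpha}\|\be^k\|^2$ (with $\gamma\asymp\mu$) into a single Lyapunov function, whereas the paper proves a separate weighted error-accumulation lemma; both give the same $\frac{1}{\mu K}+\frac{\EE[\|\be^0\|^2]}{\alpha K}+\alpha\sigma^2$ rate.

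Three minor points. First, the constants must satisfy $\mu_0\rho_0$ small and $\alpha_0$ large enough; the paper's appendix takes $\mu_0\le 1/(4\rho_0)$ and $\alpha_0\ge 2L_f^2/\max\{L_f,\tilde L\}^2$. Second, each STORM iteration costs two stochastic gradients, so the oracle count is $2K+b_0$ rather than $K+b_0$ (same order). Third, Step 5 is not an obstacle, since \Cref{Ass: constraints conditions} holds at every generated iterate, which is exactly how the paper applies it.
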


The result in \Cref{Theorem: the convergence rate of storm with initial feasibility} matches the $\calO(\varepsilon^{-3})$ lower bound for unconstrained SFOMs with variance reduction~\cite{arjevani2023lower} and the best-known rate for nonconvex constrained stochastic optimization~\cite{shi2025momentum} under \Cref{Ass: the expected smoothness of f}, where the DC structure is absent. Crucially, it demonstrates that our single-loop penalty framework and analysis can effectively handle the DC structure without increasing the complexity order with respect to \(\varepsilon\). 

The following corollary establishes the oracle complexity result for MoSSP-R without assuming initial approximate feasibility.
\begin{corollary}
    \label{Theorem: the basic convergence rate of storm without initial feasibility}
    Under the assumptions of \Cref{Theorem: the convergence rate of storm with initial feasibility} with parameters set as $\rho = \calO(K^{1/4})$, $\mu = \calO(K^{-1/4})$, $\alpha = \calO(K^{-1/2})$, $0 < \beta \leq 1$, the oracle complexity of MoSSP-R to obtain a stochastic $\varepsilon$-KKT point or a stochastic $\varepsilon$-stationary point (without \Cref{Ass: constraints conditions}) $\bx^{R+1}$ is of order $\calO(\varepsilon^{-4})$.
\end{corollary}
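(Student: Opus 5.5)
The statement to prove is the last corollary: MoSSP-R without approximately feasible initialization. With $\rho=\calO(K^{1/4})$, $\mu=\calO(K^{-1/4})$, $\alpha=\calO(K^{-1/2})$ it should give oracle complexity $\calO(\varepsilon^{-4})$.

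The plan is to reuse the general rate estimate for MoSSP-R that underlies \Cref{Theorem: the convergence rate of storm with initial feasibility}. This is the recursive-momentum analogue of \Cref{Lemma: the estimation of the convergence rate of Polyak for infeasible stationary point}, proved in the appendix with the potential $\calL_{\rho,\mu}$ and the STORM error recursion. I would state it with generic exponents: $\rho\equiv\calO(K^{l})$, $\mu\equiv\calO(K^{-\tau})$, $\alpha\equiv\calO(K^{-2\tau})$ (mirroring the coupling $\alpha\propto\mu^2$ in \eqref{Eq: the parameter conditions in VR for theorem}), with $l\le\tau$ so that $\mu L_\rho=\calO(1)$. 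The bound I expect has the shape
\[
\max\{\EE\|\bu^{R+1}\|^2,\EE\|\bx^{R+1}-\prox_{\mu g}(\bz^R)\|^2\}=\calO\bigl(K^{\tau-1}(\calL_{\rho,\mu}(\bw^0)+1)+K^{\tau-1}\EE\|\be^0\|^2/\alpha\cdots+K^{-2\tau}\bigr),
\]
and the infeasibility residual $\EE\|\nabla\bc\,\bc\|^2$ is this same bound times $\rho^{-2}=K^{-2l}$. The factor $\rho^{-2}$ comes from writing $\rho\nabla\bc(\bx)\bc(\bx)=\nabla Q_\rho-\nabla f$ and bounding $\nabla Q_\rho$ via $\bu^{k+1}$, $\mu^{-1}\|\bx-\prox_{\mu g}\|$, $G$ and the subgradient bounds. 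I would first derive this template by tracking the STORM variance term $\EE\|\bD^k-\nabla Q_\rho(\bx^k)\|^2$. Its one-step contraction is $(1-\alpha)$ plus $2L_f^2\|\bx^{k+1}-\bx^k\|^2+2\alpha^2\sigma^2$. The $\|\bx^{k+1}-\bx^k\|^2$ term is absorbed into the descent of $\calL$ thanks to $\alpha\gtrsim\mu^2L_f^2$. The steady-state error is then $\calO(\alpha\sigma^2)$, and the initial error is $\sigma^2/b_0$ (here $b_0=1$ is allowed).

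The key step is bounding $\calL_{\rho,\mu}(\bw^0)$ without initial feasibility. With $\bx^0=\bz^0$ we get $\calL_{\rho,\mu}(\bw^0)=f(\bx^0)+h(\bx^0)+\frac{\rho}{2}\|\bc(\bx^0)\|^2-\calM_{\mu g}(\bx^0)\le\calO(1)+\rho C^2/2=\calO(K^{l})$, using Assumption~\ref{Ass: boundedness of c and f}. The lower bound on $\calL$ (Appendix) is $\calO(1)$ uniformly in $\rho$. The criticality residual is therefore $\calO(K^{l+\tau-1}+K^{-\tau}\text{-type variance terms})$. Balancing the two dominant terms $K^{l+\tau-1}$ and the stochastic term (which for STORM scales like $\alpha/\mu^{2}\cdot\mu^{2}\dots$, effectively $K^{-2\tau}$, with the initial-error term $K^{\tau-1}\cdot\sigma^2/(\alpha K^{0})$ also of order $K^{3\tau-1}$) under $l=\tau$ gives $K^{2\tau-1}=K^{-2\tau}$, hence $\tau=1/4$. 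The criticality residual is then $\calO(K^{-1/2})$. Infeasible stationarity is $\calO(K^{-2l}\cdot K^{0})=\calO(K^{-1/2})$, since the $\calL$-term contributes $K^{-2l+l+\tau-1}=K^{-1/2}$.

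Under Assumption~\ref{Ass: constraints conditions}, $\EE\|\bc(\bx^{R+1})\|^2\le\delta^{-2}\EE\|\nabla\bc\,\bc\|^2=\calO(K^{-1/2})$. The relation between $\bu^{R+1}$ and \eqref{Def: the definition of u} follows with $\bar\blambda=\rho\bc(\bx^{R+1})$, $\bar\by=\prox_{\mu g}(\bz^R)$, by the inclusion established in the appendix. Setting $K^{-1/2}\le\varepsilon^2$ gives $K=\calO(\varepsilon^{-4})$, and each iteration uses $\calO(1)$ oracle calls, so the total is $\calO(\varepsilon^{-4})$.

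The main obstacle is getting the exact exponent bookkeeping of the STORM error terms (the initial-batch term and the $\alpha\sigma^2$ accumulation) in the generic-exponent template, and checking that the stepsize-dependent conditions ($\mu L_\rho\le$ const, $\alpha\ge c\mu^2L_f^2$, $\alpha<1$) hold for $l=\tau=1/4$, $\alpha\propto K^{-1/2}$. I would first write the template precisely and then verify that no term exceeds $K^{-1/2}$.
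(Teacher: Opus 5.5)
Your route is the same as the paper's. You bound $\calL_{\rho,\mu}(\bw^0)+1=\calO(K^{1/4})$ via $\|\bc(\bx^0)\|\le C$, take $b_0=\calO(1)$ so that $\EE\|\be^0\|^2=\calO(1)$, plug both into the constant-parameter MoSSP-R bounds of \Cref{Lemma: the error bound of the infeasible stationarity in expectation for VR}, and balance exponents. Your schedule ($\mu\propto K^{-1/4}$, $\alpha\propto\mu^2\propto K^{-1/2}$) is the paper's choice $l=\tfrac14$, $\tau=\tfrac12$ in its notation $\mu\propto K^{-l}$, $\alpha\propto K^{-\tau}$.

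However, the initial-error bookkeeping you wrote down is wrong, and taken literally it breaks your conclusion. You write the term as $K^{\tau-1}\EE\|\be^0\|^2/\alpha$ and say it is of order $K^{3\tau-1}$. At $\tau=\tfrac14$ that is $K^{-1/4}\gg K^{-1/2}$, which would give only $K=\calO(\varepsilon^{-8})$. Your balancing step $K^{2\tau-1}=K^{-2\tau}$ then ignores this term, so the claimed $\calO(K^{-1/2})$ criticality rate does not follow from your own template.

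The correct order, in your notation, is $K^{2\tau-1}$. Telescope the STORM recursion \eqref{Eq: the stochastic error of the variance estimator} in the form $\alpha\EE\|\be^k\|^2\le\EE\|\be^k\|^2-\EE\|\be^{k+1}\|^2+2L_f^2\EE\|\bw^{k+1}-\bw^k\|^2+2\alpha^2\sigma^2$. Absorbing the $\bw$-term via \eqref{eq:sequence-residual-bound} and $\alpha\ge 32\mu^2L_f^2$ from \eqref{Eq: the additional parameters assumptions in storm} gives \eqref{Eq: weighted cumulative error bound under constant parameters for storm}. There the initial error contributes only $2\EE\|\be^0\|^2/(\alpha K)$ to the averaged error $\frac1K\sum_k\EE\|\be^k\|^2$.

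That average enters the criticality residual with an $\calO(1)$ weight, not a $\mu^{-1}$ weight. In \Cref{Lemma: criticality-stationarity-measure}, the factor $(\mu\beta)^{-2}$ multiplies $\|\bw^{k+1}-\bw^k\|^2\le 8\mu(\cdots+\mu\|\be^k\|^2)$, so only the potential-difference term picks up $1/\mu$. Hence the initial-error contribution is $\EE\|\be^0\|^2/(\alpha K)=\calO(K^{2\tau-1})=\calO(K^{-1/2})$. In the paper's template the factor $K^{\tau-1}$ multiplying $\EE\|\be^0\|^2$ already equals $1/(\alpha K)$, so no further $1/\alpha$ should be added.

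This is also exactly why $b_0=\calO(1)$ suffices here, whereas \Cref{Theorem: the convergence rate of storm with initial feasibility}, with $\alpha\propto K^{-2/3}$, needs $b_0=\calO(K^{1/3})$. With this correction, the three criticality terms $K^{l+\tau-1}$, $K^{2\tau-1}$, $K^{-2\tau}$ all equal $K^{-1/2}$ at $l=\tau=\tfrac14$, and the rest of your argument goes through.

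One harmless slip: in the infeasibility bound the potential term is $K^{-2l}\cdot K^{l+\tau-1}=K^{-1}$, not $K^{-1/2}$. The $K^{-1/2}$ there comes from $18G^2/\rho^2$, which you do include.
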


\section{Numerical Experiments}
\label{Section: Numerical experiments}
\subsection{Experiment Setup}
We evaluate MoSSP-P (\Cref{alg:mossp-p}) and MoSSP-R (\Cref{alg:mossp-r}) on an equality-constrained binary classification problem with DC regularization \cite{hong2023constrained}:
\begin{align}
\label{eq.libsvm}
\min_{\bx \in \RR^n}&\quad
\frac{1}{N} \sum_{i=1}^N \log(1 + e^{-y_i (X_i^\top \bx)}) + \lambda \left( \|\bx\|_1 - \|\bx\|_2 \right) \notag \\
 \text{s.t.} &\quad \|\bx\|_2^2 = 1,
\end{align}
where \( X_i \in \RR^n \) denotes the feature vector and \( y_i \in \{-1, 1\} \) denotes the binary label for each \( i \in [N] \), with \(N\) denoting the total sample size. We compare our algorithms with two double-loop baselines: SPDC \cite{xu2019stochastic,nitanda2017stochastic} for simple convex-constrained DC(-regularized) optimization and SALM \cite{sun2023algorithms} for linearly constrained DC-regularized optimization. Each baseline is equipped with Polyak momentum (P) or recursive momentum (R), yielding four variants: SPDC-P, SPDC-R, SALM-P, SALM-R. We evaluate on three LIBSVM datasets \cite{chang2011libsvm}: \texttt{a9a} (32,561 samples, 123 features), \texttt{phishing} (11,055 samples, 68 features), and \texttt{australian} (690 samples, 14 features). The monitored metrics are objective value $F(\bx^k)$ and constraint violation $|\|\bx^k\|_2^2 - 1|$. \Cref{Fig: a9a_results} shows convergence trajectories on the \texttt{a9a} dataset (averaged over five runs), while \Cref{table:libsvm_result_polyak,table:libsvm_result_Recursive} report final values as mean $\pm$ std.

We also evaluate on Problem~\eqref{eq:multiple_quadeq} with $M$ quadratic equality constraints; the results are reported in Appendix~\ref{subsec:multiple_quadratic_equalities}. Complete results for the main experiments on all datasets, together with further details of the experimental setup, are provided in Appendices~\ref{app:implementation_details} and~\ref{app:additional_experimental_results}.
\begin{figure}[t]
    \centering
    \includegraphics[width=\columnwidth]{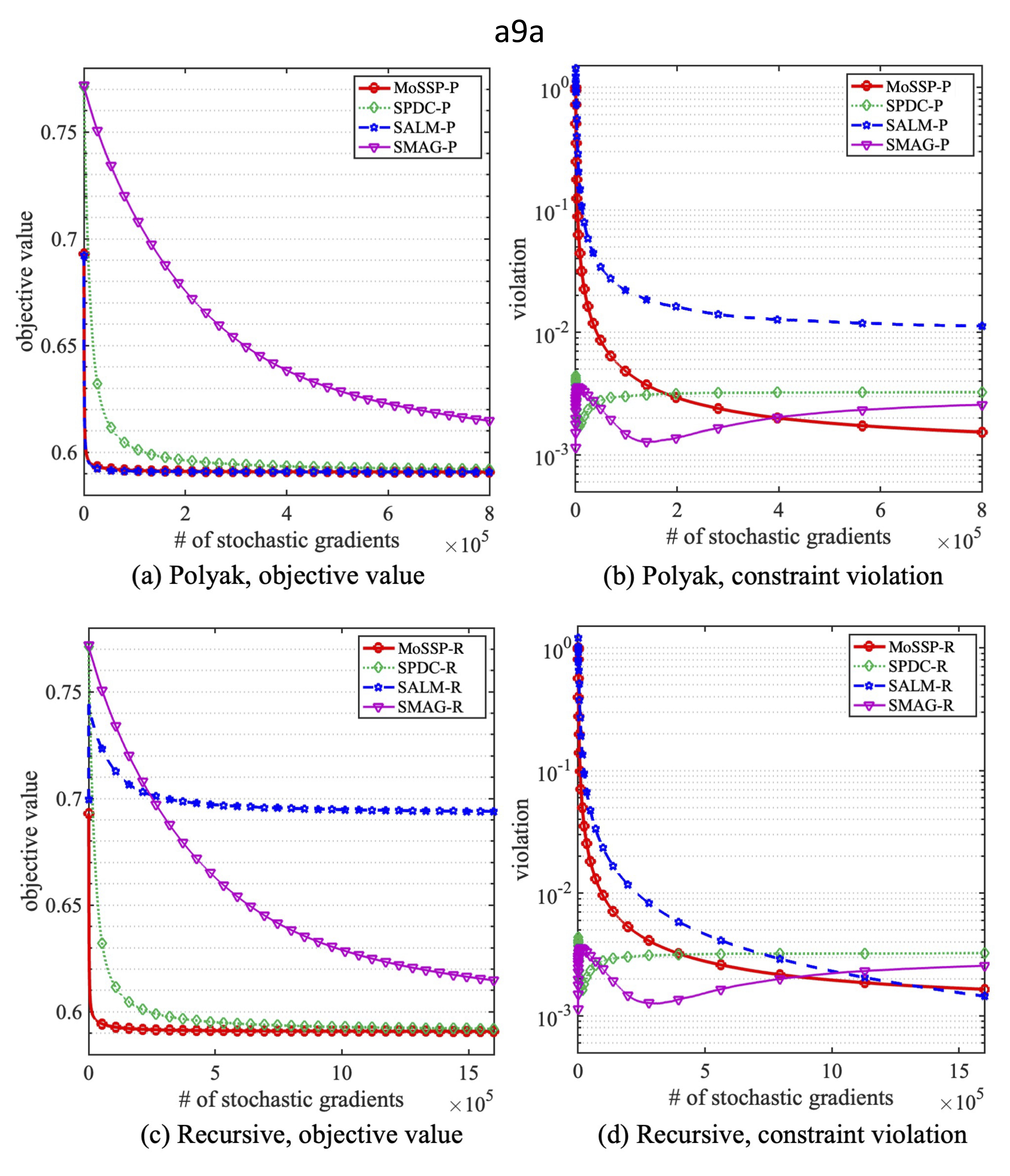}
    \caption{Comparison of MoSSP variants, SPDC, and SALM on \texttt{a9a}. (a), (b) Polyak momentum: objective value and constraint violation. (c), (d) Recursive momentum: objective value and constraint violation.}
    \label{Fig: a9a_results}
    \vskip -0.12in
\end{figure}
\begin{table*}[t]
\centering
\caption{Final performance comparison on three LIBSVM datasets. Results are reported over five independent runs. Bold font denotes the best result.}
\label{table:libsvm_results}
\begin{subtable}{\textwidth}
\centering
\caption{Mean \(\pm\) std of objective value (Obj. Value) and constraint violation (Const. Viol.) for MoSSP-P and two baseline methods with Polyak momentum.}
\label{table:libsvm_result_polyak}
\begin{small}
\begin{sc}
\resizebox{\textwidth}{!}{%
\begin{tabular}{lcccc}
\toprule
Dataset & Metric & MoSSP-P & SPDC-P & SALM-P \\
\midrule
\multirow{2}{*}{\texttt{a9a}}
 & Obj. Value & $\mathbf{0.5901 \pm 4.2\times 10^{-5}}$ & $0.5917 \pm 1.9\times 10^{-4}$ & $0.5917 \pm 1.17\times 10^{-4}$ \\
 & Const. Viol. & $\mathbf{1.53\times 10^{-3} \pm 1.24\times 10^{-5}}$ & $3.23\times 10^{-2} \pm 1.65\times 10^{-5}$ & $1.30\times 10^{-2} \pm 2.52\times 10^{-4}$ \\
\midrule
\multirow{2}{*}{\texttt{phishing}}
 & Obj. Value & $\mathbf{0.6045 \pm 1.6\times 10^{-5}}$ & $0.6051 \pm 4.8\times 10^{-5}$ & $0.6047 \pm 1.0\times 10^{-6}$ \\
 & Const. Viol. & $\mathbf{3.40\times 10^{-3} \pm 9.2\times 10^{-6}}$ & $3.66\times 10^{-2} \pm 5.73\times 10^{-5}$ & $1.21\times 10^{-2} \pm 1.52\times 10^{-4}$ \\
\midrule
\multirow{2}{*}{\texttt{australian}}
 & Obj. Value & $\mathbf{0.6239 \pm 3.6\times 10^{-5}}$ & $0.6265 \pm 6.74\times 10^{-4}$ & $0.6241 \pm 1.3\times 10^{-5}$ \\
 & Const. Viol. & $7.14\times 10^{-3} \pm 8.65\times 10^{-5}$ & $\mathbf{1.95\times 10^{-3} \pm 2.92\times 10^{-5}}$ & $2.19\times 10^{-2} \pm 5.11\times 10^{-4}$ \\
\bottomrule
\end{tabular}}%
\end{sc}
\end{small}
\end{subtable}

\vspace{0.6em}

\begin{subtable}{\textwidth}
\centering
\caption{Mean \(\pm\) std of objective value (Obj. Value) and constraint violation (Const. Viol.) for MoSSP-R and two baseline methods with recursive momentum.}
\label{table:libsvm_result_Recursive}
\begin{small}
\begin{sc}
\resizebox{\textwidth}{!}{%
\begin{tabular}{lcccc}
\toprule
Dataset & Metric & MoSSP-R & SPDC-R & SALM-R \\
\midrule
\multirow{2}{*}{\texttt{a9a}}
 & Obj. Value & $\mathbf{0.5809 \pm 3.8\times 10^{-4}}$ & $0.5921 \pm 2.14\times 10^{-4}$ & $0.6303 \pm 4.23\times 10^{-3}$ \\
 & Const. Viol. & $\mathbf{1.03\times 10^{-3} \pm 1.24\times 10^{-5}}$ & $3.23\times 10^{-2} \pm 7.35\times 10^{-6}$ & $1.30\times 10^{-2} \pm 1.17\times 10^{-4}$ \\
\midrule
\multirow{2}{*}{\texttt{phishing}}
 & Obj. Value & $\mathbf{0.6045 \pm 1.1\times 10^{-5}}$ & $0.6052 \pm 4.7\times 10^{-5}$ & $0.6054 \pm 9.7\times 10^{-5}$ \\
 & Const. Viol. & $\mathbf{4.63\times 10^{-3} \pm 7.12\times 10^{-6}}$ & $1.10\times 10^{-1} \pm 5.68\times 10^{-4}$ & $7.42\times 10^{-3} \pm 2.88\times 10^{-4}$ \\
\midrule
\multirow{2}{*}{\texttt{australian}}
 & Obj. Value & $\mathbf{0.6240 \pm 6.8\times 10^{-5}}$ & $0.6265 \pm 6.74\times 10^{-4}$ & $0.6243 \pm 1.5\times 10^{-5}$ \\
 & Const. Viol. & $6.88\times 10^{-3} \pm 1.03\times 10^{-4}$ & $\mathbf{1.94\times 10^{-3} \pm 2.13\times 10^{-5}}$ & $1.92\times 10^{-2} \pm 4.86\times 10^{-4}$ \\
\bottomrule
\end{tabular}}%
\end{sc}
\end{small}
\end{subtable}
\vskip -0.1in
\end{table*}
\subsection{Results Analysis}
\label{Sec: Results Analysis}
\Cref{Fig: a9a_results} shows that the MoSSP variants converge much faster than the baselines on the large-scale \texttt{a9a} dataset. Most notably, the MoSSP variants quickly reduce early-iterate constraint violations from \(10^{0}\) to \(10^{-3}\) with only a small number of gradient evaluations, demonstrating the efficiency of the single-loop framework. In contrast, SPDC and SALM are much slower at achieving feasibility, with violations remaining at \(10^{-2}\) or higher.
This speed advantage is consistently observed in both the Polyak momentum (\Cref{Fig: a9a_results}(a)--(b)) and recursive momentum (\Cref{Fig: a9a_results}(c)--(d)) variants, confirming that the gain comes from the single-loop design itself.

\Cref{table:libsvm_result_polyak,table:libsvm_result_Recursive} provide quantitative evidence. Both MoSSP-P and MoSSP-R achieve competitive objective values and final constraint violations across the tested datasets, showing that the proposed framework is robust to the choice of momentum technique. On the large-scale \texttt{a9a} and \texttt{phishing} datasets, the MoSSP variants rapidly reduce constraint violations and achieve competitive final feasibility. On the smaller \texttt{australian} dataset, SPDC attains a lower final constraint violation, as discussed in Appendix~\ref{app:additional_experimental_results}. Overall, the proposed single-loop momentum framework achieves fast 
objective decrease and competitive feasibility control for nonconvex 
constrained DC-regularized optimization.

\section{Conclusion and Discussion}
This paper proposes \textbf{MoSSP}, a simple single-loop stochastic penalty framework for general nonconvex constrained DC-regularized optimization. We develop two momentum-based variants, MoSSP-P and MoSSP-R, and establish their oracle complexity orders for finding two types of stochastic approximate solutions for Problem~\eqref{Prob: primal problem with DC in one block}. Our analysis shows that exploiting the DC structure does not worsen the complexity order, with or without the constraint qualification. To the best of our knowledge, this is the first systematic complexity study of single-loop stochastic methods for nonconvex, functionally constrained optimization involving DC structure.

Although we focus on equality constraints in Problem~\eqref{Prob: primal problem with DC in one block}, the framework extends directly to inequality constraints:
\begin{equation*}
\begin{aligned}
   \min_{\bx \in \RR^n} \quad & F(\bx) = f(\bx) + h(\bx) - g(\bx), \\
   \text{s.t.} \quad & \bc_{\calE}(\bx) = \zero, \ \ \bc_{\calI}(\bx) \le \zero,
\end{aligned}
\end{equation*}
where $\bc_{\calE}: \RR^n \to \RR^{m_1}$ and $\bc_{\calI}: \RR^n \to \RR^{m_2}$ are smooth mappings, and $f$, $h$, $g$ are as before. The quadratic penalty \eqref{Eq: the quadratic penalty problem} becomes
\begin{equation*}
Q_{\rho}(\bx) = f(\bx) + \tfrac{\rho}{2}\bigl(\|\bc_{\calE}(\bx)\|^2 + \|[\bc_{\calI}(\bx)]_+\|^2\bigr),
\end{equation*}
where $[\,\cdot\,]_+ \coloneqq \max\{\cdot,0\}$ is applied componentwise. Correspondingly, the gradient estimators in \eqref{Eq: enhanced gradient estimator Polyak} and \eqref{Eq: gradient estimator storm} are updated by replacing $\nabla \bc(\bx^k)\bc(\bx^k)$ with $\bm{J}(\bx^k) \coloneqq \nabla \bc_{\calE}(\bx^k)\bc_{\calE}(\bx^k) + \nabla \bc_{\calI}(\bx^k)[\bc_{\calI}(\bx^k)]_+$. This replacement preserves the smoothness of $Q_{\rho}$, and the error-bound condition in
\Cref{Ass: constraints conditions} can be replaced by
\[
\left\| \bm{J}(\bx^k)\right\|
\ge
\delta
\left(
\|\bc_{\calE}(\bx^k)\|^2
+
\|[\bc_{\calI}(\bx^k)]_+\|^2
\right)^{1/2}, \forall k\ge 0.
\]

A natural direction is to extend our analysis framework to general DC optimization under nonlinear constraints, where the concave component admits the form $-\EE_{\xi}\!\left[g(\bx;\xi)\right]$ and $\prox_{\mu g}(\cdot)$ is no longer available in closed form, as in problems such as PU learning and partial AUC maximization.
\section*{Acknowledgements and Disclosure of Funding}
Xiao Wang is supported by the National Natural Science Foundation of China (No. 12271278). Luxuan Li and Chunfeng Cui are supported by the National Natural Science Foundation of China (Nos.~12471282 and 12131004). The authors would like to thank Dr.\ Lei Yang for helpful discussions and insightful comments. The authors are grateful to the Area Chairs and the anonymous reviewers for their constructive comments.
\section*{Impact Statement}
This paper presents work whose goal is to advance the field of Machine
Learning. There are many potential societal consequences of our work, none of which we feel must be specifically highlighted here.

\bibliography{refers}
\bibliographystyle{icml2026}
\newpage
\appendix
\onecolumn

\section*{Contents of Appendices}
\parskip=0.5em

\noindent \textbf{Appendix A} \quad More Related Work \dotfill \pageref{appendix:related_work} \\
\noindent \hspace*{2em} \textbf{A.1} \quad Constrained DC and DC-Regularized Optimization \dotfill \pageref{app:related_work_constrained_dc} \\
\noindent \hspace*{2em} \textbf{A.2} \quad Stochastic Nonconvex Constrained Optimization \dotfill \pageref{app:related_work_stochastic_nonconvex_constrained} \\
\noindent \hspace*{2em} \textbf{A.3} \quad Unconstrained Stochastic DC Optimization \dotfill \pageref{app:related_work_stochastic_dc} \\

\noindent \textbf{Appendix B} \quad Preliminaries \dotfill \pageref{appendix:preliminaries} \\
\noindent \hspace*{2em} \textbf{B.1} \quad Properties of Moreau Envelope \dotfill \pageref{app:moreau_properties} \\
\noindent \hspace*{2em} \textbf{B.2} \quad Criticality in DC Optimization \dotfill \pageref{app:dc_criticality} \\
\noindent \hspace*{2em} \textbf{B.3} \quad Difference-of-Moreau-Envelopes (DME) Smoothing \dotfill \pageref{Subsection: Difference-of-Moreau-Envelopes} \\
\noindent \hspace*{2em} \textbf{B.4} \quad Solution Correspondence between Original Problem and its DME Surrogate \dotfill \pageref{App: Solution Correspondence Theory} \\
\noindent \hspace*{2em} \textbf{B.5} \quad Proof of \Cref{Prop: approximate correspondence} \dotfill \pageref{App: proof of prop2.2} \\
\noindent \hspace*{2em} \textbf{B.6} \quad Discussion on \Cref{Def: the definition of approximate point} \dotfill \pageref{Appendix: Discussion on Def 2.1} \\

\noindent \textbf{Appendix C} \quad Proofs of Complexity Analysis \dotfill \pageref{appendix:proofs of the main results} \\
\noindent \hspace*{2em} \textbf{C.1} \quad Proof Sketch \dotfill \pageref{sub:proof-sketch} \\
\noindent \hspace*{2em} \textbf{C.2} \quad Constructed Potential Function \dotfill \pageref{Sub: constructed potential function} \\
\noindent \hspace*{2em} \textbf{C.3} \quad Common Auxiliary Lemmas \dotfill \pageref{Subsection: auxiliary lemmas} \\
\noindent \hspace*{2em} \textbf{C.4} \quad Oracle Complexity Analysis of MoSSP-P \dotfill \pageref{Subsection: Lemmas for Oracle complexity analysis of Algo 1} \\
\noindent \hspace*{2em} \textbf{C.5} \quad Oracle Complexity Analysis of MoSSP-R \dotfill \pageref{Subsection: Lemmas for Oracle complexity analysis of Algo 2} \\

\noindent \textbf{Appendix D} \quad Experimental Results \dotfill \pageref{appendix:complete experiment results} \\
\noindent \hspace*{2em} \textbf{D.1} \quad Implementation Details \dotfill \pageref{app:implementation_details} \\
\noindent \hspace*{2em} \textbf{D.2} \quad Additional Experimental Results \dotfill \pageref{app:additional_experimental_results} \\
\noindent \hspace*{2em} \textbf{D.3} \quad Experimental Results on Multiple Quadratic Equality Constraints \dotfill \pageref{subsec:multiple_quadratic_equalities} \\

\hrulefill
\newpage

\section{More Related Work}
\label{appendix:related_work}
\subsection{Constrained DC and DC-Regularized Optimization}
\label{app:related_work_constrained_dc}

DC structures in learning problems commonly arise from two different sources. First, they may appear as an algebraic consequence of max-structured or minimax-type risk formulations, such as positive-unlabeled learning~\citep{kiryo2017positive}, partial AUC optimization~\citep{yao2022large}, and related adversarial formulations, where the objective can often be rewritten or approximated as a DC function. Second, DC structure is deliberately introduced through sparsity-promoting nonconvex regularizers, such as capped-\(\ell_1\), SCAD, MCP, or \(\ell_1-\ell_2\), which approximate \(\ell_0\)-type sparsity while retaining a tractable convex-concave decomposition; see, e.g., \citep[Table~1]{gong2013general}.

In the deterministic setting, several methods have been developed for constrained DC and DC-regularized optimization problems. \citet{zhou2024proximal} proposed a proximal ADMM for structured DC programs whose objective is the difference of two possibly nonsmooth convex functions and whose constraints are given by a linear mapping into a closed convex set. Under the Kurdyka--{\L}ojasiewicz property, they established convergence of the generated sequence to a critical point. \citet{lu2022penalty} developed penalty and augmented-Lagrangian methods for structured nonsmooth constrained DC programs with DC inequality constraints. Under a pointwise Slater condition, they showed that any feasible accumulation point generated by the penalty method is B-stationary for the original problem; the augmented-Lagrangian variant further yields KKT-type optimality conditions together with accumulation points of auxiliary multiplier sequences.

Several related deterministic schemes, including augmented-Lagrangian, sequential-convexification, and moving-balls-approximation (MBA) type methods, have also been studied for inequality-constrained DC or DC-regularized problems \citep{kanzow2025adaptive,liu2025convergence,yu2021convergence}. Specifically, \citet{kanzow2025adaptive} proposed a safeguarded augmented Lagrangian method for DC optimization with linear equality constraints and convex inequality constraints. Under a modified Slater constraint qualification, they established subsequential convergence to generalized KKT points. Nevertheless, their theoretical analysis relies on the convexity of the constraints, making its extension to nonconvex feasible sets such as Problem~\eqref{Prob: primal problem with DC in one block} challenging. \citet{yu2021convergence} studied a line-search variant of MBA-type algorithms for DC-regularized problems with smooth inequality constraints. Under the MFCQ, they established convergence to stationary points when the constructed potential function satisfies the KL property. More recently, \citet{liu2025convergence} developed an inexact MBA method for DC-regularized optimization coupled with differentiable inequality constraints whose gradients are locally Lipschitz continuous. Under the MSCQ, the partial bounded multiplier property, boundedness of the iterates, and the KL property of the constructed potential function, they achieved full sequence convergence to strong stationary points. They also established an iteration complexity of \(\calO(\varepsilon^{-2})\) for finding an \(\varepsilon\)-KKT point under the MFCQ and smoothness assumptions on the constraints. However, the MBA-type methods above are designed for inequality-constrained problems and rely on feasibility/interiority-type assumptions. Hence, they do not directly apply to equality-constrained feasible sets, such as Problem~\eqref{Prob: primal problem with DC in one block}.

\subsection{Stochastic Nonconvex Constrained Optimization}
\label{app:related_work_stochastic_nonconvex_constrained}
For stochastic optimization with nonconvex constraints, such as Problem \eqref{Prob: primal problem with DC in one block} with $g = 0$, stochastic penalty methods, including augmented Lagrangian (AL) approaches, have been extensively studied \citep{kushner1974penalty,wang2017penalty,xu2020primal}. Stochastic sequential quadratic programming (SQP) methods represent another prominent class of algorithms for addressing equality-constrained stochastic problems. Recent advances in stochastic SQP encompass convergence guarantees \citep{BeraCurtRobiZhou21, na2021inequality}, analysis in expectation \citep{NaAnitKola22}, almost-sure convergence \citep{curtis2023almost}, and worst-case complexity bounds \citep{BeraBollZhou22, NaMaho22,curtis2024worst}. {Notably, these methods differ in their computational structure:} penalty methods require updating the penalty parameter in the outer loop and (approximately) minimizing the penalized subproblem in the inner loop, while SQP methods compute search directions by solving a sequence of quadratic programming subproblems.

Alternatively, single-loop penalty methods have gained traction for such problems, and many studies have focused on different settings of Problem \eqref{Prob: primal problem with DC in one block}. For Problem \eqref{Prob: primal problem with DC in one block} with \(h - g = 0\), \citet{Alacaoglu2023ComplexityOS} proposed a linearized quadratic penalty approach incorporating recursive momentum \citep{cutkosky2019momentum}, achieving a sample complexity of $\tilde{\calO}(\varepsilon^{-4})$ via adaptive penalty parameters. To identify a stronger $\varepsilon$-stationary point with constraint violation at most $\varepsilon$, \citet{lu2024variancereducedfirstordermethodsdeterministically} introduced a linearized quadratic penalty method with truncated momentum. Under a regularity condition on constraint gradients, they achieved a nearly optimal complexity of $\tilde{\calO}(\varepsilon^{-3})$ using recursive momentum, whereas Polyak momentum yields $\tilde{\calO}(\varepsilon^{-4})$ with diminishing step sizes. {Recently, \citet{zuo2025adaptive} studied an adaptive single-loop stochastic penalty method for solving nonconvex optimization with equality and inequality constraints, proposing an adaptive penalty parameter update scheme and establishing an $\calO(\varepsilon^{-4})$ oracle complexity bound under a local CQ condition.} For Problem \eqref{Prob: primal problem with DC in one block} with $g = 0$, \citet{jin2022stochastic} proposed a single-loop stochastic primal-dual method based on the linearized AL function for nonconvex problems with numerous functional constraints. To find an $\varepsilon$-KKT point, their approach achieves an oracle complexity of $\calO(\varepsilon^{-6})$ starting from an arbitrary point, and $\calO(\varepsilon^{-5})$ with a feasible initial point. Subsequently, under certain constraint qualifications, \citet{shi2025momentum} developed a linearized AL method, establishing an oracle complexity of $\calO(\varepsilon^{-4})$ (or a nearly optimal $\calO(\varepsilon^{-3})$ with a nearly feasible initial point).

Within the exact penalty framework, \citet{yang2026singleloop} proposed a single-loop stochastic algorithm based on a hinge penalty method for Problem \eqref{Prob: primal problem with DC in one block} with weakly convex objective and constraint functions, achieving an $\calO(\varepsilon^{-6})$ oracle complexity for finding a near-$\varepsilon$-KKT point under a regularity condition. By combining the SPIDER-type variance reduction technique \citep{fang2018spider}, \citet{liu2025single} presented a single-loop stochastic subgradient method for nonconvex nonsmooth stochastic optimization with weak inequality constraints in expectation. Under a (uniform) Slater-type constraint qualification, the method achieves an $\calO(\varepsilon^{-4})$ sample complexity for evaluations of both the objective and constraint function subgradients, and $\calO(\varepsilon^{-6})$ for evaluations of the constraint function values to produce an $(\varepsilon, \varepsilon)$-KKT point. {For problems with deterministic equality constraints, \citet{cui2025exact} proposed a method that achieves an $\calO(\varepsilon^{-3})$ sample complexity to find an $\varepsilon$-KKT point.}

\subsection{Unconstrained Stochastic DC Optimization}
\label{app:related_work_stochastic_dc}
For the unconstrained case of Problem \eqref{Prob: primal problem with DC in one block}, while deterministic DC algorithms have been extensively studied \citep{le2018dc}, their stochastic counterparts (SDCA) with non-asymptotic convergence analysis remain relatively less explored, appearing only in a few works such as \citep{nitanda2017stochastic,xu2019stochastic}. However, these analyses require smoothness \citep{nitanda2017stochastic} or H\"{o}lder continuity assumptions on the gradients of DC components \citep{xu2019stochastic}, which may be restrictive for many nonsmooth functions, such as the \(\ell_{1-2}\) norm. The Moreau envelope provides a powerful smoothing technique for handling nonsmoothness via the proximal operator and has been widely used in stochastic weakly convex optimization \citep{davis2019stochastic}. Subsequently, \citet{sun2023algorithms} applied Moreau envelope smoothing to both DC components, enabling non-asymptotic convergence analysis for unconstrained DC problems under a relaxed criticality criterion. \citet{hu2024single} further proposed a single-loop stochastic method for nonsmooth difference-of-weakly-convex problems with an $\calO(\varepsilon^{-4})$ oracle complexity; however, their setting is unconstrained and their DWC analysis requires uniformly bounded second moments of stochastic subgradients for both components (see Assump. 4.6(iii) in \citet{hu2024single}). More recently, \citet{chayti2025stochastic} introduced momentum-based variance reduction for stochastic DC optimization, but their single-loop analysis assumes a smooth concave component and smooths only the convex component, which differs from our setting where the concave component in DC structure can be nonsmooth and the problem is further subject to nonconvex constraints.

\section{Preliminaries}
\label{appendix:preliminaries}
For the convergence analysis, we first introduce several definitions and preliminary results for nonsmooth DC optimization and Moreau-envelope smoothing. We use the following definitions of general subgradient and subdifferential \cite{davis2019stochastic,rockafellar2009variational}.
\begin{definition}
    \label{subgradient and subdifferential}
    Consider a function $f: \RR^n \to \RR \cup \{+\infty\}$ and a point $\bx \in \RR^n$, with $f(\bx)$ being finite. A vector $\bv \in \RR^n$ is a general subgradient of \(f\) at \(\bx\) if 
\begin{align}
f(\by) \geq f(\bx) + \langle \bv, \by - \bx \rangle + o(\|\by - \bx\|) \text{ as } \by \to \bx. \notag
\end{align}
\end{definition}
The subdifferential \(\partial f(\bx)\) is the set of subgradients of $f$ at $\bx$. For a continuously differentiable function $f$, $\partial f(\bx) = \{\nabla f(\bx)\}$; for convex functions, this coincides with the convex subdifferential. For notational simplicity, we abuse the notation $\partial f(\bx)$ to denote an arbitrary subgradient from the corresponding subdifferential when the context is clear.

A mapping $\calM : \mathcal{D} \to \RR^l$ is said to be $C$-Lipschitz continuous if $\|\calM(\bx) - \calM(\bx')\| \leq C\|\bx - \bx'\|$ for all $\bx, \bx' \in \mathcal{D}$. A function $f: \RR^n\rightarrow \RR$ is said to be $L_f$-smooth if it is continuously differentiable and has an $L_f$-Lipschitz continuous gradient, that is $\| \nabla f(\bx) - \nabla f(\bx') \| \leq L_f \| \bx - \bx'\|$ for all $\bx,\, \bx' \in \RR^n$. It then satisfies the following inequality:
\begin{align}
    f(\bx) \leq f(\bx') + \langle \nabla f(\bx'), \bx - \bx' \rangle + \frac{L_f}{2} \| \bx - \bx'\|^2,  \notag
    \quad \forall \bx,\bx' \in \RR^n.
\end{align}
A function $\phi: \RR^n \rightarrow \RR \cup \{ \infty \}$ is $m_\phi$-weakly convex if $\phi(\bx) + \frac{m_\phi}{2} \|\bx\|^2$ is convex.

\subsection{Properties of \textit{Moreau Envelope}}
\label{app:moreau_properties}
Regarding the definition of the \textit{Moreau envelope} of \(\phi\) in \eqref{Eq: the proximal operator}, it is straightforward to verify that
\begin{align*}
\calM_{\mu\phi}(\bz)= \phi(\prox_{\mu \phi}(\bz)) + \frac{1}{2\mu} \|\prox_{\mu \phi}(\bz)- \bz\|^2.
\end{align*}
For any \(\mu \in(0, \frac{1}{m_{\phi}})\), \(\prox_{\mu \phi}(\bz)\) is unique due to the strong convexity of the minimization in \eqref{Eq: the proximal operator} and is $\frac{1}{1 - \mu m_{\phi}}$-Lipschitz continuous. It is well known that if $\phi$ is convex, then its Moreau envelope $\calM_{\mu\phi}$ is also convex; see, e.g., \citep[Theorem~2.26]{rockafellar2009variational}. Moreover, $\calM_{\mu \phi}$ maintains the global minimization structure of $\phi$, i.e.,
\begin{align}
   \min_{\bx} \phi(\bx) \leq \phi(\prox_{\mu \phi}(\bz)) \leq \calM_{\mu \phi}(\bz) \leq \phi(\bz) \quad \text{for all } \bz.
   \label{Eq: global bound}
\end{align}
\subsection{Criticality in DC Optimization}
\label{app:dc_criticality}
Consider the unconstrained DC optimization problem:
\begin{equation}
    \min_{\bx \in \RR^n} \Psi (\bx) := \phi(\bx) - g(\bx).
    \label{Eq: unconstrained DC problem in appendix}
\end{equation}
A point $\bar{\bx}\in \RR^n$ is called a {\it critical point} \cite{Oliveira2019ProximalBM} of Problem \eqref{Eq: unconstrained DC problem in appendix} if
\begin{align}
    \label{Def: Critical point in DC}
    \zero \in \partial \phi (\bar{\bx}) - \partial g(\bar{\bx}), \text{ or equivalently, }\partial \phi(\bar{\bx}) \cap \partial g(\bar{\bx}) \neq \emptyset.
\end{align}
In particular, when $g$ is continuously differentiable or constant, \eqref{Def: Critical point in DC} simplifies to the general stationarity condition $\zero \in \partial \Psi(\bar{\bx})$ \cite{rockafellar2009variational}. Since it is generally hard to find an exact critical point in a finite number of iterations, iterative algorithms normally pursue an \textit{$\varepsilon$-critical point} $\bar{\bx}$ for a given \(\varepsilon > 0\), satisfying
\begin{align}
    \mathrm{dist}\left(\partial g(\bar{\bx}), \partial \phi (\bar{\bx}) \right) \leq \varepsilon, \notag
\end{align}
or equivalently, there exist $\bar{\bu}_{1} \in \partial \phi (\bar{\bx})$ and $\bar{\bu}_{2} \in \partial g(\bar{\bx})$ such that \(\|\bar{\bu}_{1} - \bar{\bu}_{2}\| \leq \varepsilon\). When $g$ is non-differentiable, however, finding such a point remains challenging \cite{xu2019stochastic}. Instead, \citet{xu2019stochastic} focuses on seeking a \textit{nearly $\varepsilon$-stationary point \( \bar{\bx}\)}; that is, there exists $\bx^*$ such that \( \| \bar{\bx} - \bx^*\| \leq \calO(\varepsilon)\) and \(\mathrm{dist}(\partial g(\bx^*), \partial \phi (\bx^*) ) \leq \varepsilon\).

In this paper, we introduce the notion of \textit{$\varepsilon$-critical points} \cite{moudafi2021complete,sun2023algorithms}, defined as follows.
\begin{definition}[$\varepsilon$-critical point]
    \label{Def: epsilon-critical point in DC} 
    Given $\varepsilon > 0$, a point $\bar{\bx}\in \RR^{n}$ is called an $\varepsilon$-critical point of Problem \eqref{Eq: unconstrained DC problem in appendix} if there exist $\bar{\bu}\in \partial \phi (\bar{\bx}) - \partial g(\bar{\by})$ and \(\bar{\by} \in \RR^n\) such that
    \begin{align}
    \label{Eq: the criticality in def-varepsilon-critical}
        \max \{\|\bar{\bu}\|, \|\bar{\bx}- \bar{\by}\|\} \leq \varepsilon.
    \end{align}
\end{definition}

A weaker notion of \Cref{Def: epsilon-critical point in DC} is used in \cite{hu2024single,yao2022large} when proximal mappings of $\phi$ and $g$ can only be solved inexactly.
\begin{definition}[Nearly $\varepsilon$-critical point \cite{yao2022large}]
    \label{Def: epsilon-critical point 3} 
    Given $\varepsilon > 0$, a point $\bar{\bx}\in \RR^{n}$ is called a nearly $\varepsilon$-critical point of Problem \eqref{Eq: unconstrained DC problem in appendix} if there exist $\bar{\bu}\in \partial \phi (\bar{\by}_1) - \partial g(\bar{\by}_2)$ and $\bar{\by}_1, \bar{\by}_2 \in \RR^n$ such that
    \begin{equation}
        \label{Eq: epsilon-critical point 3}
        \max \{ \|\bar{\bu}\|, \|\bar{\bx}- \bar{\by}_1\|, \|\bar{\bx}- \bar{\by}_2\| \} \leq \varepsilon.
    \end{equation}
\end{definition}
Notably, when $\bar{\by}_1 = \bar{\bx}$, Definition~\ref{Def: epsilon-critical point 3} reduces to Definition~\ref{Def: epsilon-critical point in DC}, and when $\varepsilon = 0$, this definition recovers \Cref{Def: Critical point in DC}.

\subsection{\textit{Difference-of-Moreau-Envelopes} (DME) Smoothing}
\label{Subsection: Difference-of-Moreau-Envelopes}
The DME smoothing approximation of \(\Psi\) in Problem \eqref{Eq: unconstrained DC problem in appendix} for any $\mu \in (0, 1/m_{\phi})$ is defined as
\begin{align}
\label{Eq: the DME smoothing function in appendix}
 \Psi_{\mu}(\bz) := \calM_{\mu \phi}(\bz) - \calM_{\mu g}(\bz).
\end{align}
The smoothness properties and approximation bounds of $\Psi_{\mu}$ are summarized below \cite{hiriart1991regularize}.
\begin{proposition}
\label{Prop: smoothness of DME}
Consider Problem \eqref{Eq: unconstrained DC problem in appendix} and \eqref{Eq: the DME smoothing function in appendix}. For any \(0 < \mu < \frac{1}{m_\phi}\), the following hold.
 \begin{enumerate}
 \item[(i)] \(\Psi_{\mu}\) is continuously differentiable with gradient \( \nabla \Psi_{\mu}(\bz) = \mu^{-1} (\prox_{\mu g}(\bz) - \prox_{\mu \phi}(\bz))\).
 
 \item[(ii)] \(\nabla \Psi_{\mu}\) is $(\frac{2- \mu m_{\phi}}{\mu - \mu^2 m_{\phi}})$-Lipschitz continuous.

 \item[(iii)] For any $\bz \in \RR^n $, it holds that
 \[
\Psi(\prox_{\mu \phi}(\bz)) \leq \Psi_{\mu}(\bz) \leq \Psi(\prox_{\mu g}(\bz)).\]
 \end{enumerate}
 \end{proposition}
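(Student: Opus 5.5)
The plan is to prove the three items separately. Items (i) and (ii) reduce to standard facts about Moreau envelopes of weakly convex and convex functions. Item (iii) is a direct comparison argument that evaluates each envelope at the other function's proximal point.

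For (i), I will invoke the standard result (e.g. \citep[Prop.~13.37]{rockafellar2009variational}, or \citep{Moreau1965} in the convex case) that for an $m_\phi$-weakly convex, proper, closed $\phi$ and $\mu\in(0,1/m_\phi)$, the envelope $\calM_{\mu\phi}$ is continuously differentiable. Its gradient is $\mu^{-1}(\bz-\prox_{\mu\phi}(\bz))$, which is \eqref{Eq: the gradient of Moreau envelope}. To reduce to the convex setting, I will write $\phi=\tilde\phi-\tfrac{m_\phi}{2}\|\cdot\|^2$ with $\tilde\phi$ convex. The proximal subproblem in \eqref{Eq: the proximal operator} is then $(\mu^{-1}-m_\phi)$-strongly convex, so $\prox_{\mu\phi}$ is single-valued. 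Applying the same fact to the convex $g$ (the case $m_g=0$) and subtracting the two gradient formulas yields
\begin{equation*}
\nabla\Psi_\mu(\bz)=\mu^{-1}\bigl(\prox_{\mu g}(\bz)-\prox_{\mu\phi}(\bz)\bigr).
\end{equation*}

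For (ii), I will use the Lipschitz constants of the two proximal maps. The optimality condition gives $(\bz_i-\bp_i)/\mu\in\partial\phi(\bp_i)$ with $\bp_i=\prox_{\mu\phi}(\bz_i)$. By weak convexity, $\partial\phi$ is hypomonotone, i.e. $\langle \bv_1-\bv_2,\bp_1-\bp_2\rangle\ge -m_\phi\|\bp_1-\bp_2\|^2$. Substituting the optimality conditions into this inequality gives
\begin{equation*}
\langle \bz_1-\bz_2,\bp_1-\bp_2\rangle\ge(1-\mu m_\phi)\|\bp_1-\bp_2\|^2,
\end{equation*}
and Cauchy--Schwarz then yields the Lipschitz constant $1/(1-\mu m_\phi)$. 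Since $g$ is convex, $\prox_{\mu g}$ is nonexpansive. By the triangle inequality, $\nabla\Psi_\mu$ is therefore Lipschitz with constant
\begin{equation*}
\mu^{-1}\Bigl(1+\tfrac{1}{1-\mu m_\phi}\Bigr)=\frac{2-\mu m_\phi}{\mu-\mu^2 m_\phi},
\end{equation*}
which is exactly the claimed constant.

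For (iii), let $\bp=\prox_{\mu\phi}(\bz)$ and $\bq=\prox_{\mu g}(\bz)$. For the lower bound, I use $\calM_{\mu\phi}(\bz)=\phi(\bp)+\tfrac1{2\mu}\|\bp-\bz\|^2$. Since $\bp$ is a feasible point in the minimization defining $\calM_{\mu g}(\bz)$, we have $\calM_{\mu g}(\bz)\le g(\bp)+\tfrac1{2\mu}\|\bp-\bz\|^2$. Subtracting, the quadratic terms cancel and $\Psi_\mu(\bz)\ge\phi(\bp)-g(\bp)=\Psi(\bp)$. For the upper bound, the roles are swapped: $\calM_{\mu g}(\bz)=g(\bq)+\tfrac1{2\mu}\|\bq-\bz\|^2$ and $\calM_{\mu\phi}(\bz)\le\phi(\bq)+\tfrac1{2\mu}\|\bq-\bz\|^2$, which give $\Psi_\mu(\bz)\le\Psi(\bq)$.

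The only delicate point, and the main obstacle, is extended-valuedness. The subtraction in (iii) requires $g(\bp)<\infty$, so that $\Psi(\bp)$ is not of the form $\infty-\infty$. I will handle this by using that $g$ is real-valued in the setting of \eqref{Prob: primal problem with DC in one block} (or, more generally, that $\mathrm{dom}\,\phi\subseteq\mathrm{dom}\,g$). If $\phi(\bq)=+\infty$, the upper bound holds trivially.
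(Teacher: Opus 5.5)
Your proposal is correct. The paper gives no proof of this proposition. It presents it as a summary of known properties of DME smoothing and attributes it to \citet{hiriart1991regularize}, so your argument is a self-contained replacement for that citation rather than an alternative route.

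What your route adds is an explanation of the constant in (ii). The quantity $\frac{2-\mu m_\phi}{\mu-\mu^2 m_\phi}$ is exactly $\mu^{-1}$ times the sum of two Lipschitz constants: $1$ for $\prox_{\mu g}$ and $(1-\mu m_\phi)^{-1}$ for $\prox_{\mu\phi}$. The second comes from hypomonotonicity of $\partial\phi$, which is where $m_\phi$ enters. Incidentally, you could bound $\nabla\calM_{\mu\phi}$ and $\nabla\calM_{\mu g}$ separately, with Lipschitz constants $\max\{\mu^{-1}, m_\phi/(1-\mu m_\phi)\}$ and $\mu^{-1}$. That gives the smaller constant $\max\{2/\mu,\,1/(\mu(1-\mu m_\phi))\}$, so the stated constant is valid but not tight when $m_\phi>0$; reproducing it exactly, as you do, is all the statement requires. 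Your cross-evaluation argument for (iii) is the standard one.

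One correction concerns what you call the main obstacle: there is no $\infty-\infty$ issue in (iii). Since $\bm{p}=\prox_{\mu\phi}(\bz)$ minimizes the proper function $\phi+\frac{1}{2\mu}\|\cdot-\bz\|^2$, you always have $\phi(\bm{p})<\infty$. So if $g(\bm{p})=+\infty$, then $\Psi(\bm{p})=-\infty$ and the lower bound holds trivially. This mirrors your handling of $\phi(\bm{q})=+\infty$ in the upper bound.

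Thus the inequalities need neither real-valuedness of $g$ nor $\mathrm{dom}\,\phi\subseteq\mathrm{dom}\,g$. The latter is only the natural standing assumption that keeps $\Psi$ from taking the value $-\infty$. Item (iii) therefore holds in the extended-valued setting of \eqref{Eq: unconstrained DC problem in appendix} as stated.
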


\subsection{{Solution Correspondence between Original Problem and its DME Surrogate}}
\label{App: Solution Correspondence Theory}
For completeness, we recall the theoretical relationship between the solutions of Problem~\eqref{Eq: unconstrained DC problem in appendix} and its DME surrogate~\eqref{Eq: the DME smoothing function in appendix}, as established in~\citet{hiriart1991regularize} and~\citet{sun2023algorithms}.

Let $\calX_{\Psi} := \argmin_{\bx} \Psi(\bx)$ and $\hat{\Psi} := \min_{\bx} \Psi(\bx)$ denote its solution set and optimal value, respectively. For the DME surrogate, we similarly define its optimal value as $\hat{\Psi}_{\mu} := \inf_{\bz} \Psi_{\mu}(\bz)$ and its solution set as $\calX_{\Psi_{\mu}} := \argmin_{\bz} \Psi_{\mu}(\bz)$.

\begin{proposition}[Stationary Point and Global Minimizer Correspondence \cite{hiriart1991regularize}]
\label{Prop: correspondence of stationary point and global minimizer}
Consider Problem~\eqref{Eq: unconstrained DC problem in appendix} and its DME surrogate~\eqref{Eq: the DME smoothing function in appendix}. Suppose that $\calX_{\Psi} \neq\emptyset$. For any $\mu \in (0, 1/m_{\phi})$, the following statements hold:

\textbf{(i) Stationary point correspondence.} If $\bar{\bz}$ is a stationary point of $\Psi_{\mu}$ (i.e., $\nabla \Psi_{\mu}(\bar{\bz}) = \zero$), then $\bar{\bx} := \prox_{\mu\phi}(\bar{\bz})$ is a critical point of $\Psi$, satisfying
\begin{equation}
\label{Eq: stationary identity}
\prox_{\mu\phi}(\bar{\bz}) = \prox_{\mu g}(\bar{\bz}), \quad \Psi(\bar{\bx}) = \Psi_{\mu}(\bar{\bz}).
\end{equation}
Conversely, if $\bar{\bx}$ is a critical point of $\Psi$ and $\bar{\bxi} \in \partial\phi(\bar{\bx}) \cap \partial g(\bar{\bx})$, then $\bar{\bz} := \bar{\bx} + \mu \bar{\bxi}$ is a stationary point of $\Psi_{\mu}$ with $\prox_{\mu \phi}(\bar{\bz}) = \prox_{\mu g}(\bar{\bz}) = \bar{\bx}$ and $\Psi(\bar{\bx}) = \Psi_{\mu}(\bar{\bz})$.

\textbf{(ii) Global minimizer correspondence.} The set $\calX_{\Psi_{\mu}}$ is nonempty and $\hat{\Psi} = \hat{\Psi}_{\mu}$. Furthermore, if $\bar{\bz} \in \calX_{\Psi_{\mu}}$, then $\prox_{\mu \phi}(\bar{\bz}) \in \calX_{\Psi}$. Conversely, if $\bar{\bx} \in \calX_\Psi$ and $\bar{\bxi} \in \partial\phi(\bar{\bx}) \cap \partial g(\bar{\bx})$, then $\bar{\bz} := \bar{\bx} + \mu \bar{\bxi} \in \calX_{\Psi_{\mu}}$.
\end{proposition}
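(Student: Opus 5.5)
The proof works directly from the prox characterization of $\nabla\Psi_{\mu}$ in \Cref{Prop: smoothness of DME} and the optimality conditions of the two proximal subproblems. Fix $\mu\in(0,1/m_{\phi})$. Then $\phi+\frac{1}{2\mu}\|\cdot-\bz\|^2$ is $(\mu^{-1}-m_\phi)$-strongly convex, and $g+\frac{1}{2\mu}\|\cdot-\bz\|^2$ is strongly convex. So $\bx=\prox_{\mu\phi}(\bz)$ if and only if $\mu^{-1}(\bz-\bx)\in\partial\phi(\bx)$, and $\by=\prox_{\mu g}(\bz)$ if and only if $\mu^{-1}(\bz-\by)\in\partial g(\by)$. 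The "if" direction uses two facts for a strongly convex function (here the sum of a weakly convex function and a quadratic): its general subdifferential is the convex subdifferential of the convexified function, shifted by the quadratic's gradient, and first-order stationarity is sufficient for global minimality. These equivalences are the main tool throughout.

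\textbf{Part (i).} Suppose $\nabla\Psi_{\mu}(\bar\bz)=\zero$. By \Cref{Prop: smoothness of DME}(i), $\prox_{\mu g}(\bar\bz)=\prox_{\mu\phi}(\bar\bz)=:\bar\bx$. The optimality conditions then give $\mu^{-1}(\bar\bz-\bar\bx)\in\partial\phi(\bar\bx)\cap\partial g(\bar\bx)$, so $\bar\bx$ is critical in the sense of \eqref{Def: Critical point in DC}. For the values, write $\calM_{\mu\phi}(\bar\bz)=\phi(\bar\bx)+\frac{1}{2\mu}\|\bar\bx-\bar\bz\|^2$ and the analogous expression for $g$ at the same point $\bar\bx$. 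The quadratic terms cancel, so $\Psi_{\mu}(\bar\bz)=\Psi(\bar\bx)$.

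For the converse, let $\bar\bxi\in\partial\phi(\bar\bx)\cap\partial g(\bar\bx)$ and set $\bar\bz=\bar\bx+\mu\bar\bxi$. Then $\mu^{-1}(\bar\bz-\bar\bx)=\bar\bxi$ lies in both subdifferentials. By the sufficiency direction of the optimality conditions, $\prox_{\mu\phi}(\bar\bz)=\prox_{\mu g}(\bar\bz)=\bar\bx$. Hence $\nabla\Psi_{\mu}(\bar\bz)=\zero$ by \Cref{Prop: smoothness of DME}(i), and the same cancellation as above gives $\Psi_{\mu}(\bar\bz)=\Psi(\bar\bx)$.

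\textbf{Part (ii).} First, \Cref{Prop: smoothness of DME}(iii) gives $\Psi_{\mu}(\bz)\ge\Psi(\prox_{\mu\phi}(\bz))\ge\hat\Psi$ for every $\bz$, so $\hat\Psi_{\mu}\ge\hat\Psi$. For the reverse inequality, take $\bar\bx\in\calX_{\Psi}$ and show that it admits some $\bar\bxi\in\partial\phi(\bar\bx)\cap\partial g(\bar\bx)$. Pick any $\bv\in\partial g(\bar\bx)$, which is nonempty because $g$ is finite-valued and convex near $\bar\bx$; this is the main paper's setting, and in general one needs $\bar\bx\in\mathrm{int}\,\mathrm{dom}\,g$. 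Global optimality of $\bar\bx$ together with convexity of $g$ yields, for all $\bx$,
\[
\phi(\bx)-\phi(\bar\bx)\ \ge\ g(\bx)-g(\bar\bx)\ \ge\ \langle\bv,\bx-\bar\bx\rangle .
\]
Hence $\bv\in\partial\phi(\bar\bx)$ by \Cref{subgradient and subdifferential}, and we take $\bar\bxi=\bv$. Applying the converse in (i) gives $\bar\bz=\bar\bx+\mu\bar\bxi$ with $\Psi_{\mu}(\bar\bz)=\Psi(\bar\bx)=\hat\Psi$. Therefore $\hat\Psi_{\mu}\le\hat\Psi$, so $\hat\Psi_{\mu}=\hat\Psi$, the infimum is attained, $\calX_{\Psi_{\mu}}\ne\emptyset$, and $\bar\bz\in\calX_{\Psi_{\mu}}$. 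The same argument applies to any $\bar\bxi$ in the intersection, which proves the stated converse. Finally, if $\bar\bz\in\calX_{\Psi_{\mu}}$, then $\Psi(\prox_{\mu\phi}(\bar\bz))\le\Psi_{\mu}(\bar\bz)=\hat\Psi_{\mu}=\hat\Psi$, so $\prox_{\mu\phi}(\bar\bz)\in\calX_{\Psi}$.

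The step I expect to need the most care is the sufficiency of the first-order condition for $\prox_{\mu\phi}$ when $\phi$ is only weakly convex and possibly extended-valued. This is handled through the convexity of $\phi+\frac{m_\phi}{2}\|\cdot\|^2$: its convex subdifferential equals $\partial\phi+m_\phi(\cdot)$, so the strongly convex prox objective is minimized exactly at its stationary points. The nonemptiness of $\partial g(\bar\bx)$ in (ii) is the other point that must be justified, as noted above.
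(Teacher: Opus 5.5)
The paper gives no proof of this proposition; it recalls it from \citet{hiriart1991regularize} and \citet{sun2023algorithms}, so there is no in-paper argument to compare against. Your proposal is a correct, self-contained verification. Reducing everything to the two prox optimality characterizations, and getting sufficiency for $\prox_{\mu\phi}$ from convexity of $\phi+\frac{m_\phi}{2}\|\cdot\|^2$, is what weak convexity of $\phi$ requires. Your caveat about $g$ is also apt. The appendix allows $g$ to be extended-valued, and then both $\partial g(\bar{\bx})\neq\emptyset$ and your inequality chain need $\bar{\bx}\in\mathrm{int}\,\mathrm{dom}\,g$. Only the local version of that inequality is needed for \Cref{subgradient and subdifferential}.

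One remark on economy in (ii): you do not need the step $\bv\in\partial\phi(\bar{\bx})$. The upper half of \Cref{Prop: smoothness of DME}(iii) states $\Psi_\mu(\bz)\le\Psi(\prox_{\mu g}(\bz))$. Combined with $\prox_{\mu g}(\bar{\bx}+\mu\bv)=\bar{\bx}$ for any $\bv\in\partial g(\bar{\bx})$, it gives $\Psi_\mu(\bar{\bx}+\mu\bv)\le\hat\Psi\le\hat\Psi_\mu$. This yields the value identity, attainment, and the stated converse at once. Since $\prox_{\mu g}$ is onto when $g$ is finite-valued, the same reasoning gives $\hat\Psi_\mu=\hat\Psi$ even without assuming $\calX_\Psi\neq\emptyset$. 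Toland--Singer duality, together with $(\calM_{\mu\phi})^*=\phi^*+\frac{\mu}{2}\|\cdot\|^2$, is the other classical route to that identity. Your detour through $\partial\phi$ is still informative: it shows that minimizers of $\Psi$ are critical and that the resulting $\bar{\bz}$ is a stationary point of $\Psi_\mu$.
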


\subsection{Proof of \Cref{Prop: approximate correspondence}}
\label{App: proof of prop2.2}
\begin{proof}
Let $\bar{\bz}$ be a point satisfying
$\|\nabla \Psi_{\mu}(\bar{\bz})\|\le \min\{1,\mu^{-1}\}\varepsilon$ and set
\(
  \bar{\bx} := \prox_{\mu \phi}(\bar{\bz}),\,
  \bar{\by} := \prox_{\mu g}(\bar{\bz}).
\)
Using \eqref{Eq: the gradient of Moreau envelope}, we have
\(
  \bar{\bxi}_{\phi} := \mu^{-1}(\bar{\bz}-\bar{\bx}) \in \partial \phi(\bar{\bx})
\) and \(
  \bar{\bxi}_{g} := \mu^{-1}(\bar{\bz}-\bar{\by}) \in \partial g(\bar{\by}).
\)
Combining these with \eqref{Eq: the gradient of Moreau envelope} yields
\(
  \bar{\bxi}_{\phi} - \bar{\bxi}_{g}
    = \mu^{-1}(\bar{\by}-\bar{\bx})
    = \nabla \Psi_{\mu}(\bar{\bz}).
\)
Setting $\bar{\bu}:=\bar{\bxi}_{\phi}-\bar{\bxi}_{g}\in\partial\phi(\bar{\bx})-\partial g(\bar{\by})$, it follows that
\begin{align*}
  \max\bigl\{\| \bar{\bu} \|,
             \|\bar{\bx}-\bar{\by}\|\bigr\}  = \max\bigl\{\|\nabla \Psi_{\mu}(\bar{\bz})\|,
                  \mu\|\nabla \Psi_{\mu}(\bar{\bz})\|\bigr\} = \max\{1,\mu\}\,\|\nabla \Psi_{\mu}(\bar{\bz})\| \leq \varepsilon,
\end{align*}
which implies that $\bar{\bx}$ is an $\varepsilon$-critical point of \(\Psi\) in the sense of Definition~\ref{Def: epsilon-critical point in DC}.
\end{proof}

\subsection{Discussion on \Cref{Def: the definition of approximate point}}
\label{Appendix: Discussion on Def 2.1}
There are connections between $\varepsilon$-KKT points and $\varepsilon$-stationary points, but the two notions are not equivalent in general.
For a given point $\bx$, if the constraint violation $\|\bc(\bx)\|$ is small, then the infeasible-stationarity measure
$\|\nabla \bc(\bx)\bc(\bx)\|$ is also small. However, the reverse implication does not generally hold. In practical computations, an iterate may get trapped at a stationary point of the constraint-violation minimization problem $\min_{\bx\in\RR^{n}} \tfrac12\|\bc(\bx)\|^{2}$, satisfying $\nabla \bc(\bx^*)\bc(\bx^*)=0$ while $\bc(\bx^*)\neq 0$, which is referred to as an
infeasible stationary point. Therefore, to guarantee that small infeasible stationarity implies small constraint violation and hence approximate feasibility,
a constraint qualification condition such as \Cref{Ass: constraints conditions} is required. Under such a condition, an $\varepsilon$-stationary point further implies an $\calO(\varepsilon)$-KKT point in expectation.

\section{Proofs of Complexity Analysis}
\label{appendix:proofs of the main results}
\subsection{Proof Sketch}
\label{sub:proof-sketch}

We provide a proof sketch of \Cref{Theorem: the convergence rate of polyak with initial feasibility,Theorem: the convergence rate of storm with initial feasibility}. We outline how the DME-based residual generated by MoSSP-P and MoSSP-R yields the stochastic approximate solutions in \Cref{Def: the definition of approximate point}. The proof proceeds in four steps.

\subsubsection*{Step 1: From the algorithmic residual to the stochastic KKT inclusion.}
Let us first set $\by^k := \prox_{\mu_k g}(\bz^k)$. The purpose of this step is to identify the random variables required in Definition~\ref{Def: the definition of approximate point}. For both MoSSP-P and MoSSP-R, the optimality condition of the \(\bx\)-update implies, as shown in Appendix~\ref{Subsection: auxiliary lemmas}, that
\[
\bu^{k+1} \in \nabla Q_{\rho_k}(\bx^{k+1}) +\partial h(\bx^{k+1}) -\partial g(\by^k).
\]
Equivalently, by setting $\blambda^{k+1}:=\rho_k\bc(\bx^{k+1})$, we obtain
\[
\bu^{k+1} \in \nabla f(\bx^{k+1}) +\nabla \bc(\bx^{k+1})\blambda^{k+1} +\partial h(\bx^{k+1}) -\partial g(\by^k).
\]
Therefore, for the randomly chosen index \(R\), the tuple
\[
    \bar{\bx}:=\bx^{R+1},\qquad \bar{\by}:=\by^R,\qquad \bar{\blambda}:=\rho_R\bc(\bx^{R+1}),\qquad \bar{\bu}:=\bu^{R+1},
\]
satisfies the KKT inclusion \eqref{Def: the definition of u} almost surely. Hence, it remains to bound
\[
    \|\bu^{R+1}\|,\qquad \|\bx^{R+1}-\by^R\|,\qquad \|\nabla\bc(\bx^{R+1})\bc(\bx^{R+1})\|,
\]
and, under \Cref{Ass: constraints conditions}, the feasibility violation \(\|\bc(\bx^{R+1})\|\).

\subsubsection*{Step 2: Establishing descent of the DC-structured potential function.}
The concave term \(-g\) prevents the standard penalty objective from admitting a sufficient descent property. We therefore introduce the potential function
\[
\calL_{\rho,\mu}(\bw) = Q_\rho(\bx)+h(\bx)+\frac{1}{2\mu}\|\bx-\bz\|^2-\calM_{\mu g}(\bz), \qquad \bw=(\bx,\bz),
\]
which encodes the two proximal updates of the proposed algorithms. Under the mild parameter condition \(\mu_k L_{\rho_k}\le \tfrac{1}{4}\), \Cref{Lemma: approximate-sufficient-descent} yields
\[
\calL_{\rho_{k+1},\mu_{k+1}}(\bw^{k+1}) \le \calL_{\rho_k,\mu_k}(\bw^k) -\Omega(\mu_k^{-1})\|\bw^{k+1}-\bw^k\|^2 +\Delta_{k+1} +\frac{\rho_{k+1}-\rho_k}{2}C^2 +\mu_k\|\be^k\|^2 .
\]
Here \(\be^k\) is the stochastic gradient estimation error, and \(\Delta_{k+1}\) accounts for the change of the smoothing parameter \(\mu_k\). Thus, after summing over \(k\), the iterate variation
\[
    \sum_{k=0}^{K-1} \mu_k^{-1}\|\bw^{k+1}-\bw^k\|^2
\]
is controlled by the initial potential difference, the parameter variation terms, and the cumulative stochastic error. In the constant-parameter setting used in the main theorems, the variation terms \(\Delta_{k+1}\) and \(\rho_{k+1}-\rho_k\) vanish, leaving only the initial potential difference and the stochastic error. This iterate variation bound is then used to control the criticality measure in the next steps.

\subsubsection*{Step 3: From iterate variation to residual bounds.}
The one-step residual estimate in \Cref{Lemma: criticality-stationarity-measure} shows that there exists a constant $\kappa_1>0$ independent of $K$ such that
\[
\max\{\|\bu^{k+1}\|^2,\|\bx^{k+1}-\by^k\|^2\} \leq \kappa_1\left(\|\be^k\|^2 + \bigl(L_{\rho_k}^2+\mu_k^{-2}\bigr) \|\bw^{k+1}-\bw^k\|^2\right).
\]
Thus the descent estimate from Step 2 reduces the averaged DME-induced criticality residual
$\max\{\|\bu^{k+1}\|^2,\|\bx^{k+1}-\by^k\|^2\}$
to the iterate-variation term and the cumulative stochastic error.

The infeasible stationarity measure is then controlled via \Cref{Lemma: the error bound of infeasible stationarity and kkt conditions}, which gives, for some constant $\kappa_2>0$ independent of $K$, with \(\underline{\rho}_K:=\min_{0\leq k\leq K-1}\rho_k\),
\[
\frac{1}{K}\sum_{k=0}^{K-1} \|\nabla \bc(\bx^{k+1})\bc(\bx^{k+1})\|^2 \leq \frac{\kappa_2}{\underline{\rho}_K^2} \left( \frac{1}{K}\sum_{k=0}^{K-1}\|\bu^{k+1}\|^2 +1 \right).
\]
This estimate makes explicit the role of the penalty parameter: a larger \(\rho_k\) improves feasibility control through the factor \(\underline{\rho}_K^{-2}\), but it simultaneously increases the smoothness constant \(L_{\rho_k}\) in the criticality bound above. Thus, the penalty, smoothing, and momentum parameters must be carefully balanced. Under \Cref{Ass: constraints conditions}, the infeasible stationarity bound further implies feasibility.

\subsubsection*{Step 4: Stochastic error control and parameter selection.}
The two algorithmic variants differ mainly in how the cumulative stochastic error identified in Step 3 is controlled.

(i) For MoSSP-P, the Polyak error recursion contains an \(\alpha_k^{-1}\)-weighted variation term ($\alpha_k^{-1}L_f^2\| \bw^{k+1} - \bw^k\|^2$ in \eqref{Eq: the recursive gradient relationship in Polyak momentum scheme}), which necessitates the parameter coupling \(\alpha=\Theta(\mu)\). Consequently, the dominant criticality bound takes the form
\[
    \mathcal R_P := \max\Bigl\{ \EE[\|\bu^{R+1}\|^2], \EE[\|\bx^{R+1}-\prox_{\mu g}(\bz^R)\|^2] \Bigr\} \lesssim \frac{1}{\mu K}+\mu .
\]
Balancing the two terms gives \(\mu=\Theta(K^{-1/2}),\ \alpha=\Theta(K^{-1/2})\), and hence \(\mathcal R_P=\calO(K^{-1/2})\). With \(\rho=\Theta(K^{1/4}),\ \|\bc(\bx^0)\|^2=\calO(K^{-1/4})\), the feasibility residual is also of order \(\calO(K^{-1/2})\) under \Cref{Ass: constraints conditions}. Thus \(K=\calO(\varepsilon^{-4})\) suffices, yielding the \(\calO(\varepsilon^{-4})\) oracle complexity of MoSSP-P.

(ii) For MoSSP-R, the recursive momentum estimator does not introduce the \(\alpha_k^{-1}\)-weighted variation term (see \eqref{Eq: the stochastic error of the variance estimator}). This allows for the weaker coupling \(\alpha=\Theta(\mu^2)\), and the dominant criticality bound becomes
\[
    \mathcal R_R := \max\Bigl\{ \EE[\|\bu^{R+1}\|^2], \EE[\|\bx^{R+1}-\prox_{\mu g}(\bz^R)\|^2] \Bigr\} \lesssim \frac{1}{\mu K}+\mu^2 .
\]
Balancing the two terms gives \(\mu=\Theta(K^{-1/3}),\ \alpha=\Theta(K^{-2/3})\). Together with the initial batch size \(b_0=\Theta(K^{1/3})\), which controls the initial estimation error, we obtain \(\mathcal R_R=\calO(K^{-2/3})\). With \(\rho=\Theta(K^{1/3}) \mbox{ and } \|\bc(\bx^0)\|^2=\calO(K^{-1/3})\), the feasibility residual is also of order \(\calO(K^{-2/3})\) under \Cref{Ass: constraints conditions}. Hence the setting \(K=\calO(\varepsilon^{-3})\) ensures an $\varepsilon$-approximate solution. Since \(b_0=\calO(K^{1/3})\) is dominated by the total number of per-iteration oracle calls, MoSSP-R achieves \(\calO(\varepsilon^{-3})\) oracle complexity.
\subsection{Constructed Potential Function}
\label{Sub: constructed potential function}
We define the potential function
\begin{equation}
    \label{Eq: the potential function}
    \calL_{\rho, \mu}(\bw) := Q_{\rho}(\bx) + h(\bx) + \frac{1}{2\mu}\|\bx - \bz\|^{2} - \calM_{\mu g}(\bz),
\end{equation}
where $\bw = (\bx,\bz)$. Recall that $F^*$ is the finite infimum introduced in Problem~\eqref{Prob: primal problem with DC in one block}. Since $Q_\rho(\bx) = f(\bx) + \frac{\rho}{2}\|\bc(\bx)\|^2 \ge f(\bx)$, we obtain the uniform lower bound:
\begin{equation}
    \label{Eq: the lower bound of penalty function}
    \inf_{\bx \in \RR^n} \left\{ Q_{\rho}(\bx) + h(\bx) - g(\bx) \right\}
    \ge \inf_{\bx \in \RR^n} F(\bx) = F^* > -\infty, \quad \forall\, \rho > 0.
\end{equation}

Unlike the convex case (where $g=0$), the concave component $-g$ prevents the potential sequence $\{\calL_{\rho_k, \mu_k}(\bw^k)\}_{k \in \NN}$ from being trivially lower-bounded. However, since $g$ is convex and $\sup_{\bv_g\in\partial g(\bx)}\|\bv_g\|\le G$ for all $\bx$ (\Cref{Ass: boundedness of c and f}), $g$ is $G$-Lipschitz continuous, and we can bound the potential function for any $\mu > 0$ and $\rho > 0$ as follows:
\begin{align}
    \calL_{\rho, \mu}(\bw)
    &\geq Q_{\rho}(\bx) + h(\bx) + \frac{1}{2\mu}\|\bx - \bz\|^2 - g(\bz) \notag \\
    &\geq Q_{\rho}(\bx) + h(\bx) - g(\bx) + \frac{1}{2\mu}\|\bx - \bz\|^2 - G\|\bx - \bz\| \notag \\
    &\geq Q_{\rho}(\bx) + h(\bx) - g(\bx) - \frac{G^2\mu}{2} \notag \\
    &\geq F^* - \frac{G^2\mu}{2}, \label{Eq: the lower bound of potential function}
\end{align}
where the first inequality follows from $\calM_{\mu g}(\bz) \leq g(\bz)$ in \eqref{Eq: global bound}, the second utilizes the Lipschitz continuity of $g$, and the third applies Young's inequality. Under \eqref{Eq: basic parameter conditions for both algorithms}, the sequence $\{\mu_k\}$ is non-increasing, hence there exists a constant $\bar{\mu}$ independent of $K$ satisfying $\mu_k \leq \bar{\mu}$ for all $k \in \NN$. Then \eqref{Eq: the lower bound of potential function} yields the uniform lower bound
\begin{equation}
    \label{Eq: uniform lower bound L star}
    \calL_{\rho_k, \mu_k}(\bw^k) \geq F^* - \frac{G^2 \bar{\mu}}{2} =: \calL^*, \quad \forall\, k \in \NN,
\end{equation}
which we use throughout the proofs.
\subsection{{Common Auxiliary Lemmas}}
\label{Subsection: auxiliary lemmas}
For notational convenience, define the stochastic gradient error
\begin{equation}
\label{Eq: the stochastic error}
\be^k :=
\begin{cases}
    \bS^k-\nabla Q_{\rho_k}(\bx^k), & \text{for MoSSP-P},\\
    \bD^k-\nabla Q_{\rho_k}(\bx^k), & \text{for MoSSP-R}.
\end{cases}
\end{equation}
Also set
\[
    \by^k:=\prox_{\mu_k g}(\bz^k).
\]
We first verify that the residual \(\bu^{k+1}\) defined in
\eqref{Eq: u-define-stochastic-polyak} or
\eqref{Eq: u-define-stochastic-storm} satisfies the residual inclusion
\begin{align}
    \bu^{k+1} \in \partial \psi_{\rho_k}(\bx^{k+1}) - \partial g(\prox_{\mu_k g}(\bz^k)).
    \label{Eq: the definition of u for analysis}
\end{align}
Indeed, both \eqref{eq:bx_update_Polyak} and
\eqref{eq:bx_update_storm} can be written in the unified form
\[
    \bx^{k+1}
    =
    \prox_{\mu_k h}
    \bigl(\bz^k-\mu_k \tilde{\nabla} Q_{\rho_k}(\bx^k) \bigr).
\]
The optimality condition of this proximal update gives
\[
    \zero
    \in
    \tilde{\nabla} Q_{\rho_k}(\bx^k)
    +\partial h(\bx^{k+1})
    +\mu_k^{-1}(\bx^{k+1}-\bz^k).
\]
Hence, there exists
\[
    \bv_h^{k+1}
    :=
    \mu_k^{-1}(\bz^k-\bx^{k+1})
    -
    \tilde{\nabla} Q_{\rho_k}(\bx^k)
    \in
    \partial h(\bx^{k+1}).
\]
Moreover, by the definition of \(\by^k=\prox_{\mu_k g}(\bz^k)\), we have
\[
    \bv_g^k
    :=
    \mu_k^{-1}(\bz^k-\by^k)
    \in
    \partial g(\by^k).
\]
Using the unified definition
\[
    \bu^{k+1}
    :=
    \nabla Q_{\rho_k}(\bx^{k+1})
    -
    \tilde{\nabla} Q_{\rho_k}(\bx^k)
    +
    \mu_k^{-1}(\by^k-\bx^{k+1}),
\]
which coincides with \eqref{Eq: u-define-stochastic-polyak} for MoSSP-P
and with \eqref{Eq: u-define-stochastic-storm} for MoSSP-R, we obtain
\[
\begin{aligned}
    \bu^{k+1}
    &=
    \nabla Q_{\rho_k}(\bx^{k+1})
    +
    \bv_h^{k+1}
    -
    \bv_g^k   \in
    \nabla Q_{\rho_k}(\bx^{k+1})
    +
    \partial h(\bx^{k+1})
    -
    \partial g(\by^k).
\end{aligned}
\]
Since \(Q_{\rho_k}\) is smooth and
\(\psi_{\rho_k}=Q_{\rho_k}+h\), we have
\[
    \nabla Q_{\rho_k}(\bx^{k+1})
    +
    \partial h(\bx^{k+1})
    =
    \partial\psi_{\rho_k}(\bx^{k+1}).
\]
This proves \eqref{Eq: the definition of u for analysis}. Equivalently,
because
\[
    \nabla Q_{\rho_k}(\bx^{k+1})
    =
    \nabla f(\bx^{k+1})
    +
    \rho_k\nabla\bc(\bx^{k+1})\bc(\bx^{k+1}),
\]
the choice
\[
    \blambda^{k+1}:=\rho_k\bc(\bx^{k+1})
\]
yields
\[
\bu^{k+1}
\in
\nabla f(\bx^{k+1})
+
\nabla\bc(\bx^{k+1})\blambda^{k+1}
+
\partial h(\bx^{k+1})
-
\partial g(\by^k).
\]
Thus the residual \(\bu^{k+1}\), together with
\(\by^k\) and \(\blambda^{k+1}\), provides the KKT-type inclusion used
throughout the proof. \eproof

Under Assumption \ref{Ass: boundedness of c and f}, we can guarantee the smoothness of \(Q_{\rho}(\bx)\), which is essential for our analysis.
\begin{lemma}
    \label{Lemma: smoothness-penalty-function} Suppose that Assumption \ref{Ass: boundedness of c and f} holds.
    Then, for any $k \geq 1$ and $\rho \ge \rho_0$,
    the function
    $Q_{\rho}(\bx)$ is $L_{\rho}$-smooth on $\RR^{n}$, where
    $L_{\rho}= \rho \tilde{L}$ with $\tilde{L}= \rho_0^{-1} L_f+ G^2 + CL_c$.
\end{lemma}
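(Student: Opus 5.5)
The plan is to write out $\nabla Q_\rho$ explicitly and bound the variation of each term by the triangle inequality. For any $\bx,\bx'\in\RR^n$,
\[
\nabla Q_\rho(\bx)=\nabla f(\bx)+\rho\,\nabla\bc(\bx)\bc(\bx).
\]
This gives
\[
\|\nabla Q_\rho(\bx)-\nabla Q_\rho(\bx')\|\le \|\nabla f(\bx)-\nabla f(\bx')\| + \rho\,\|\nabla\bc(\bx)\bc(\bx)-\nabla\bc(\bx')\bc(\bx')\|.
\]
By the $L_f$-smoothness in \Cref{Ass: boundedness of c and f}, the first term is at most $L_f\|\bx-\bx'\|$.

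For the product term I add and subtract $\nabla\bc(\bx)\bc(\bx')$:
\[
\|\nabla\bc(\bx)\bc(\bx)-\nabla\bc(\bx')\bc(\bx')\|\le \|\nabla\bc(\bx)\|\,\|\bc(\bx)-\bc(\bx')\| + \|\nabla\bc(\bx)-\nabla\bc(\bx')\|\,\|\bc(\bx')\|.
\]
The second piece is at most $L_c C\|\bx-\bx'\|$, since $\bc$ is $L_c$-smooth (its Jacobian is $L_c$-Lipschitz in operator norm) and $\|\bc\|\le C$.

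The first piece needs a Lipschitz constant for $\bc$ itself. By the mean value theorem in integral form,
\[
\bc(\bx)-\bc(\bx')=\int_0^1 \nabla\bc(\bx'+t(\bx-\bx'))^\top(\bx-\bx')\,dt,
\]
and with $\|\nabla\bc\|\le G$ this gives $\|\bc(\bx)-\bc(\bx')\|\le G\|\bx-\bx'\|$. The first piece is therefore at most $G^2\|\bx-\bx'\|$.

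Combining the bounds,
\[
\|\nabla Q_\rho(\bx)-\nabla Q_\rho(\bx')\|\le \bigl(L_f+\rho(G^2+CL_c)\bigr)\|\bx-\bx'\|.
\]
Finally, $\rho\ge\rho_0$ gives $L_f=\rho\,\rho_0^{-1}L_f\cdot(\rho_0/\rho)\le \rho\,\rho_0^{-1}L_f$. The constant is then at most $\rho(\rho_0^{-1}L_f+G^2+CL_c)=\rho\tilde L=L_\rho$. Continuous differentiability of $Q_\rho$ follows because $f$ and $\bc$ are $C^1$.

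None of the steps is a real obstacle. The points needing care are interpreting $\|\nabla\bc\|$ as the operator norm consistently in the Lipschitz bound for $\bc$, and using the hypothesis $\rho\ge\rho_0$ to absorb $L_f$ into the factor $\rho$.
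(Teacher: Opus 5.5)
Your proof is correct: writing $\nabla Q_\rho=\nabla f+\rho\,\nabla\bc\,\bc$, splitting the product term into $G^2+CL_c$ (using the Lipschitz bound $\|\bc(\bx)-\bc(\bx')\|\le G\|\bx-\bx'\|$), and absorbing $L_f\le\rho\,\rho_0^{-1}L_f$ via $\rho\ge\rho_0$ gives exactly $L_\rho=\rho\tilde{L}$. The paper states this lemma without a written proof, and your argument is the standard one that produces its constant $\tilde{L}=\rho_0^{-1}L_f+G^2+CL_c$.
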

The following lemma shows that the potential function value sequence $\{\calL_{\rho_k, \mu_k}(\bw^k)\}_{k \in \NN}$ in \eqref{Eq: the potential function} is non-increasing up to a stochastic gradient error term. To ensure this descent, we assume the parameters ${\mu_k}$, ${\rho_k}$ and $\beta$ satisfy
\begin{align}
    \label{Eq: basic parameter conditions for both algorithms}
    &\mu_{k}L_{\rho_k} \leq \frac{1}{4}, \quad
    \mu_{k+1} \leq \mu_{k}, \quad
    \rho_{k} \leq \rho_{k+1}, \quad
    0 < \beta \leq 1, \quad \forall\, k \geq 0.
\end{align}

\begin{lemma}
    \label{Lemma: approximate-sufficient-descent}
    Suppose that \Cref{Ass: boundedness of c and f}
    holds, and the parameters $\mu_{k}$, $\rho_{k}$ and $\beta$ satisfy \eqref{Eq: basic parameter conditions for both algorithms}.
    Then, for any $k \geq 0$, it holds that
    \begin{align}
        \label{eq:approximate-sufficient-descent}
        \calL_{\rho_{k+1}, \mu_{k+1}}(\bw^{k+1}) \leq&\, \calL_{\rho_k, \mu_k}(\bw^{k}) - \frac{(2\mu_{k})^{-1}- L_{\rho_k}}{2}\| \bw^{k+1}- \bw^{k}\|^{2} + \frac{\rho_{k+1}- \rho_{k}}{2}C^{2} + \Delta_{k+1} +  \mu_{k}\| \be^{k}\|^{2},
    \end{align}
where $\Delta_{k+1}
:= \frac{|\mu_{k}-\mu_{k+1}|}{2\mu_{k+1}^2}
(C^{2} + \|\bx^{k+1}-\bz^{k+1}\|^{2})$. Moreover, it holds that
\begin{align}
\label{eq:sequence-residual-bound}
\| \bw^{k+1}- \bw^{k}\|^{2}
\leq&\, \frac{4 \mu_{k}}{1 - 2 \mu_{k}L_{\rho_k}}
   \bigl(
      \calL_{\rho_k, \mu_k}(\bw^{k})
      - \calL_{\rho_{k+1}, \mu_{k+1}}(\bw^{k+1})
      + \Delta_{k+1} + \frac{\rho_{k+1}- \rho_{k}}{2}C^{2}
      + \mu_{k}\| \be^{k}\|^{2}
   \bigr).
\end{align}
\end{lemma}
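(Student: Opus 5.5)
The plan is to split the passage from $\calL_{\rho_k,\mu_k}(\bw^k)$ to $\calL_{\rho_{k+1},\mu_{k+1}}(\bw^{k+1})$ into three moves and bound each one. First the $\bx$-step with $(\bz^k,\rho_k,\mu_k)$ frozen, then the $\bz$-step with $(\bx^{k+1},\rho_k,\mu_k)$ frozen, and finally the change of parameters $(\rho_k,\mu_k)\to(\rho_{k+1},\mu_{k+1})$ at the fixed point $\bw^{k+1}$. Throughout write $\tilde\nabla Q_{\rho_k}(\bx^k)=\nabla Q_{\rho_k}(\bx^k)+\be^k$, $\mu=\mu_k$ and $L=L_{\rho_k}$.

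\textbf{Step 1 ($\bx$-step).} The subproblem in \eqref{Eq: The basic update of x} has objective $\langle \tilde\nabla Q_{\rho_k}(\bx^k),\bx\rangle+h(\bx)+\frac{1}{2\mu}\|\bx-\bz^k\|^2$. This objective is $\mu^{-1}$-strongly convex and is minimized at $\bx^{k+1}$. Comparing its value at $\bx^{k+1}$ with its value at $\bx^k$ therefore gives a decrease of at least $\frac{1}{2\mu}\|\bx^{k+1}-\bx^k\|^2$.

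Next I would add the descent inequality for the $L$-smooth function $Q_{\rho_k}$ (\Cref{Lemma: smoothness-penalty-function}) and replace $\nabla Q_{\rho_k}(\bx^k)$ by $\tilde\nabla Q_{\rho_k}(\bx^k)-\be^k$. The resulting cross term is handled by Young's inequality:
\[
-\langle \be^k,\bx^{k+1}-\bx^k\rangle\le \mu\|\be^k\|^2+\tfrac{1}{4\mu}\|\bx^{k+1}-\bx^k\|^2 .
\]
This should yield
\[
\calL_{\rho_k,\mu}(\bx^{k+1},\bz^k)\le \calL_{\rho_k,\mu}(\bw^k)-\Bigl(\tfrac{1}{4\mu}-\tfrac{L}{2}\Bigr)\|\bx^{k+1}-\bx^k\|^2+\mu\|\be^k\|^2 .
\]

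\textbf{Step 2 ($\bz$-step).} Define $\varphi(\bz):=\frac{1}{2\mu}\|\bx^{k+1}-\bz\|^2-\calM_{\mu g}(\bz)$. Up to a linear term and a constant, this equals $\frac{1}{2\mu}\|\bz\|^2-\calM_{\mu g}(\bz)$.

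This is the step I expect to be the main obstacle. The key claim is that $\frac{1}{2\mu}\|\cdot\|^2-\calM_{\mu g}$ is convex and $\mu^{-1}$-smooth. To see this, note that its gradient is $\mu^{-1}\prox_{\mu g}(\bz)$, and $\prox_{\mu g}$ is firmly nonexpansive; alternatively, the function equals $\frac{1}{\mu}\bigl(\mu g\bigr)^*$-type conjugate of a convex function. Using \eqref{Eq: gradient of DME}, $\nabla\varphi(\bz^k)=\mu^{-1}(\by^k-\bx^{k+1})$, so \eqref{eq:bz_update} is exactly a gradient step $\bz^{k+1}=\bz^k-\beta\mu\nabla\varphi(\bz^k)$ with stepsize $\beta\mu\le\mu$. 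The standard descent lemma then gives
\[
\varphi(\bz^{k+1})\le\varphi(\bz^k)-\frac{1-\beta/2}{\beta\mu}\|\bz^{k+1}-\bz^k\|^2\le \varphi(\bz^k)-\tfrac{1}{2\mu}\|\bz^{k+1}-\bz^k\|^2,
\]
where the last inequality uses $\beta\le1$. Adding this to Step 1, the combined coefficient is $\min\{\frac{1}{4\mu}-\frac L2,\frac{1}{2\mu}\}=\frac{(2\mu)^{-1}-L}{2}$, which matches \eqref{eq:approximate-sufficient-descent}.

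\textbf{Step 3 (parameter change).} Increasing $\rho$ raises $Q_\rho(\bx^{k+1})$ by $\frac{\rho_{k+1}-\rho_k}{2}\|\bc(\bx^{k+1})\|^2\le\frac{\rho_{k+1}-\rho_k}{2}C^2$.

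Decreasing $\mu$ has two effects. The envelope $\calM_{\mu g}(\bz)$ is nonincreasing in $\mu$, so the term $-\calM_{\mu g}(\bz^{k+1})$ can only decrease. The quadratic term grows by
\[
\Bigl(\tfrac{1}{2\mu_{k+1}}-\tfrac{1}{2\mu_k}\Bigr)\|\bx^{k+1}-\bz^{k+1}\|^2\le\frac{|\mu_k-\mu_{k+1}|}{2\mu_{k+1}^2}\|\bx^{k+1}-\bz^{k+1}\|^2\le\Delta_{k+1}.
\]
The slack $C^2$ in $\Delta_{k+1}$ is harmless. Summing the three steps gives \eqref{eq:approximate-sufficient-descent}.

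Finally, \eqref{eq:sequence-residual-bound} follows by rearranging. The condition $\mu_kL_{\rho_k}\le\frac14$ makes $\frac{(2\mu_k)^{-1}-L_{\rho_k}}{2}=\frac{1-2\mu_kL_{\rho_k}}{4\mu_k}$ positive, so one can divide by it.
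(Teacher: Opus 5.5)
Your proposal is correct and follows the paper's proof essentially step for step. It uses the same telescoping through $(\bx^{k+1},\bz^k)$ and $\bw^{k+1}$, and the same strong-convexity, descent-lemma and Young argument for the $\bx$-step. It also gives the same parameter-change bound, which you justify in more detail via monotonicity of $\mu\mapsto\calM_{\mu g}$ and $\mu_{k+1}\le\mu_k$. Your $\bz$-step applies the descent lemma to the $\mu^{-1}$-smooth function $\tfrac{1}{2\mu}\|\bx^{k+1}-\cdot\|^2-\calM_{\mu g}$. This is equivalent to the paper's argument, which combines convexity of $\calM_{\mu g}$ with the exact quadratic identity and $\prox_{\mu_k g}(\bz^k)-\bx^{k+1}=\beta^{-1}(\bz^k-\bz^{k+1})$, and it yields the identical coefficient $\mu^{-1}(\beta^{-1}-\tfrac12)$. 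Only the upper quadratic bound (i.e., nonexpansiveness of $\prox_{\mu g}$) is needed there, not convexity of $\tfrac{1}{2\mu}\|\cdot\|^2-\calM_{\mu g}$.
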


\begin{proof}
    Since $\mu_{k}L_{\rho_k}\leq \frac{1}{4}$, it follows that
    $\frac{(2\mu_{k})^{-1}- L_{\rho_k}}{2}= \frac{1}{4 \mu_{k}}- \frac{L_{\rho_k}}{2}
    = \frac{1 - 2\mu_k L_{\rho_k}}{4\mu_k}
    \geq \frac{1}{8 \mu_{k}}> 0$.
    To prove \eqref{eq:approximate-sufficient-descent},
    we bound $\calL_{\rho_{k+1}, \mu_{k+1}}(\bw^{k+1}) - \calL_{\rho_k, \mu_k}(\bw^{k})$ by decomposing
    it as
    \begin{flalign}
        &\calL_{\rho_{k+1}, \mu_{k+1}}(\bw^{k+1}) - \calL_{\rho_k,\mu_k}(\bw^{k}) = [ \calL_{\rho_{k+1}, \mu_{k+1}}(\bw^{k+1}) - \calL_{\rho_k, \mu_k}(\bw^{k+1}) ]  + [ \calL_{\rho_k, \mu_{k}}(\bw^{k+1}) - \calL_{\rho_k, \mu_k}(\bw^{k}) ]. \label{eq:function-value-decomposition}
    \end{flalign}

For the first term in \eqref{eq:function-value-decomposition}, we have
\begin{align}
\calL_{\rho_{k+1}, \mu_{k+1}}(\bw^{k+1}) - \calL_{\rho_k, \mu_k}(\bw^{k+1})  \leq \frac{\rho_{k+1}- \rho_{k}}{2}C^{2} + \Delta_{k+1}, \label{eq:penalty-term-bound}
\end{align}
where we use $\|\bc(\bx^{k+1})\|\le C$ from \Cref{Ass: boundedness of c and f}.

    For the second term of \eqref{eq:function-value-decomposition}, we decompose it as
    \begin{flalign}
        &\calL_{\rho_k, \mu_k}(\bw^{k+1}) - \calL_{\rho_k, \mu_k}(\bw^{k}) \notag \\
        =&\, [ \calL_{\rho_k, \mu_k}(\bx^{k+1}, \bz^{k+1}) - \calL_{\rho_k, \mu_k}(\bx^{k+1}, \bz^{k}) ] + [ \calL_{\rho_k, \mu_k}(\bx^{k+1}, \bz^{k}) - \calL_{\rho_k, \mu_k}(\bx^{k}, \bz^{k}) ]. \label{eq:objective-function-decomposition}
    \end{flalign}
    The first part of \eqref{eq:objective-function-decomposition} is
\begin{align}
&\calL_{\rho_k,\mu_k}(\bx^{k+1},\bz^{k+1})
 - \calL_{\rho_k,\mu_k}(\bx^{k+1},\bz^{k}) \notag \\
=&\, \frac{1}{2\mu_k}(\|\bx^{k+1}-\bz^{k+1}\|^{2}
    - \|\bx^{k+1}-\bz^{k}\|^{2}) +  \calM_{\mu_k g}(\bz^{k}) - \calM_{\mu_k g}(\bz^{k+1}) \notag \\
\leq&\,\frac{1}{2\mu_k}(\|\bx^{k+1}-\bz^{k+1}\|^{2}
    - \|\bx^{k+1}-\bz^{k}\|^{2})  - \frac{1}{\mu_k}\langle \bz^{k}-\prox_{\mu_k g}(\bz^{k}),
     \bz^{k+1}-\bz^{k}\rangle. \label{eq:z-first-part}
\end{align}
Using the identity
\(
\|\ba\|^{2}-\|\bb\|^{2}
= 2\langle \ba-\bb,\ba\rangle - \|\ba-\bb\|^{2},
\)
with $\ba=\bx^{k+1}-\bz^{k+1}$ and $\bb=\bx^{k+1}-\bz^{k}$, we obtain
\begin{align*}
\|\bx^{k+1}-\bz^{k+1}\|^{2}-\|\bx^{k+1}-\bz^{k}\|^{2}
= 2\langle \bz^{k}-\bz^{k+1},\bx^{k+1}-\bz^{k+1}\rangle
  - \|\bz^{k+1}-\bz^{k}\|^{2}.
\end{align*}
Substituting this into \eqref{eq:z-first-part} and combining the optimality condition of the subproblem \eqref{eq:bz_update},
\begin{align}
\label{Eq: the optimality of z}
    \prox_{\mu_k g}(\bz^k) -  \bx^{k+1} = \beta^{-1}(\bz^{k} - \bz^{k+1}),
\end{align}
we obtain
\begin{align}
\calL_{\rho_k,\mu_k}(\bx^{k+1},\bz^{k+1})
 - \calL_{\rho_k,\mu_k}(\bx^{k+1},\bz^{k})
\le -\frac{1}{\mu_k}(\beta^{-1}-\frac{1}{2})
   \|\bz^{k+1}-\bz^{k}\|^{2}. \label{eq:z-update-bound}
\end{align}

For the second part of \eqref{eq:objective-function-decomposition}, by the optimality condition of the subproblem \eqref{eq:bx_update_Polyak} or \eqref{eq:bx_update_storm}, together with its strong convexity, one has
\begin{align}
h(\bx^{k+1})
+ \langle \be^{k} + \nabla Q_{\rho_k}(\bx^{k}),\, \bx^{k+1}-\bx^{k}\rangle
   + \frac{1}{2\mu_k}\|\bx^{k+1}-\bz^{k}\|^{2} \le h(\bx^{k}) + \frac{1}{2\mu_k}\|\bx^{k}-\bz^{k}\|^{2}
   - \frac{1}{2\mu_k}\|\bx^{k+1}-\bx^{k}\|^{2}. \notag
\end{align}
Using the $L_{\rho_k}$-smoothness of $Q_{\rho_k}$ and the fact
$\langle \ba,\bb\rangle \le \tfrac{1}{4\mu_k}\|\ba\|^{2}+ \mu_k\|\bb\|^{2}$, one has
\begin{align}
\calL_{\rho_k,\mu_k}(\bx^{k+1},\bz^{k})
 - \calL_{\rho_k,\mu_k}(\bx^{k},\bz^{k})
\le -\frac{(2\mu_k)^{-1}- L_{\rho_k}}{2}\|\bx^{k+1}-\bx^{k}\|^{2}
+ \mu_k\|\be^{k}\|^{2}.
\label{eq:x-update-bound}
\end{align}
Then, combining \eqref{eq:z-update-bound} and \eqref{eq:x-update-bound} in
\eqref{eq:objective-function-decomposition}, we obtain
\begin{align}
\calL_{\rho_k,\mu_k}(\bw^{k+1}) - \calL_{\rho_k,\mu_k}(\bw^{k})
\le  - \min\{\frac{(2\mu_k)^{-1}-L_{\rho_k}}{2},\,
\frac{1}{\mu_k}(\beta^{-1}-\tfrac{1}{2})\}
\|\bw^{k+1}-\bw^{k}\|^{2}  + \mu_k\|\be^{k}\|^{2}.
\label{eq:combined-update-bound}
\end{align}
Since \eqref{Eq: basic parameter conditions for both algorithms} ensures $\frac{(2\mu_k)^{-1} - L_{\rho_k}}{2} \le \frac{1}{\mu_k}(\beta^{-1}-\tfrac{1}{2})$, one has \( \min\{\frac{(2\mu_k)^{-1}-L_{\rho_k}}{2},\,
\frac{1}{\mu_k}(\beta^{-1}-\tfrac{1}{2})\} = \frac{(2\mu_k)^{-1}-L_{\rho_k}}{2}\). Then, substituting \eqref{eq:penalty-term-bound} and \eqref{eq:combined-update-bound} into
\eqref{eq:function-value-decomposition} yields \eqref{eq:approximate-sufficient-descent}. Since the coefficient of $\| \bw^{k+1}- \bw^{k}\|^{2}$ in \eqref{eq:approximate-sufficient-descent} is positive, rearranging this inequality gives \eqref{eq:sequence-residual-bound}, completing the proof.
\end{proof}
We next derive the one-step bound on \(\bu\) and the associated criticality measure during iteration.
\begin{lemma}
    \label{Lemma: criticality-stationarity-measure} Suppose that \Cref{Ass: boundedness of c and f} holds.
    Then, for any
    $k \geq 0$, with $\bu^{k+1}$ defined by \eqref{Eq: u-define-stochastic-polyak} or \eqref{Eq: u-define-stochastic-storm}, it holds
    that
    \begin{align}
        \label{Eq: the boundedness of subgradient}\| \bu^{k+1}\|^{2}\leq 3\| \be^{k}\|^{2}+ 3\bigl( L_{\rho_k}^{2}+ (\mu_{k}\beta)^{-2}\bigr) \| \bw^{k+1}- \bw^{k}\|^{2},
    \end{align}
and
    \begin{align}
    \label{Eq: the criticality measure}\max\{\| \bu^{k+1}\|^{2}, \| \bx^{k+1}- \prox_{\mu_k g}(\bz^{k})\|^{2}\}
        \le  3\| \be^{k}\|^{2}+ \bigl(3 \left(L_{\rho_k}^{2}+ (\mu_{k}\beta)^{-2}\right) + \beta^{-2}\bigr) \| \bw^{k+1}- \bw^{k}\|^{2}.
    \end{align}
\end{lemma}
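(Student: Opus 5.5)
We need to bound $\bu^{k+1} = \nabla Q_{\rho_k}(\bx^{k+1}) - \tilde{\nabla} Q_{\rho_k}(\bx^k) + \mu_k^{-1}(\by^k - \bx^{k+1})$, where $\by^k=\prox_{\mu_k g}(\bz^k)$ and $\tilde\nabla Q_{\rho_k}(\bx^k)=\nabla Q_{\rho_k}(\bx^k)+\be^k$ by the definition \eqref{Eq: the stochastic error}. The plan is to split $\bu^{k+1}$ into three pieces,
\[
\bu^{k+1} = \bigl(\nabla Q_{\rho_k}(\bx^{k+1}) - \nabla Q_{\rho_k}(\bx^{k})\bigr) - \be^k + \mu_k^{-1}(\by^k-\bx^{k+1}),
\]
and apply the elementary inequality $\|\ba+\bb+\bc\|^2\le 3(\|\ba\|^2+\|\bb\|^2+\|\bc\|^2)$. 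This produces exactly the factor $3$ appearing in \eqref{Eq: the boundedness of subgradient}.

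We then bound each piece in turn. For the first, \Cref{Lemma: smoothness-penalty-function} gives $\|\nabla Q_{\rho_k}(\bx^{k+1})-\nabla Q_{\rho_k}(\bx^k)\|^2\le L_{\rho_k}^2\|\bx^{k+1}-\bx^k\|^2\le L_{\rho_k}^2\|\bw^{k+1}-\bw^k\|^2$. (This needs $\rho_k\ge\rho_0$, which holds by monotonicity of $\rho_k$.) The second piece is simply $\|\be^k\|^2$. For the third, the key identity is the $\bz$-update rewritten as in \eqref{Eq: the optimality of z}, namely $\by^k-\bx^{k+1}=\beta^{-1}(\bz^k-\bz^{k+1})$. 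Hence $\mu_k^{-2}\|\by^k-\bx^{k+1}\|^2=(\mu_k\beta)^{-2}\|\bz^{k+1}-\bz^k\|^2\le(\mu_k\beta)^{-2}\|\bw^{k+1}-\bw^k\|^2$. Summing the three bounds yields \eqref{Eq: the boundedness of subgradient}.

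For \eqref{Eq: the criticality measure}, the same identity gives $\|\bx^{k+1}-\prox_{\mu_k g}(\bz^k)\|^2=\beta^{-2}\|\bz^{k+1}-\bz^k\|^2\le\beta^{-2}\|\bw^{k+1}-\bw^k\|^2$. Both quantities inside the max are nonnegative, so the max is at most their sum, and adding this bound to \eqref{Eq: the boundedness of subgradient} gives the stated right-hand side.

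There is no real obstacle here. The only point needing care is recognizing that the $\mu_k^{-1}(\by^k-\bx^{k+1})$ term is controlled through the $\bz$-step identity rather than through any Lipschitz property of the proximal map. No expectation or parameter condition is needed, since the bound is deterministic and pathwise.
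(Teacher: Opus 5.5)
Your proposal is correct and follows essentially the same route as the paper: substitute the $\bz$-step identity \eqref{Eq: the optimality of z} into the definition of $\bu^{k+1}$ to get $\bu^{k+1}=-\be^k+\nabla Q_{\rho_k}(\bx^{k+1})-\nabla Q_{\rho_k}(\bx^k)+(\mu_k\beta)^{-1}(\bz^k-\bz^{k+1})$, apply the three-term inequality together with $L_{\rho_k}$-smoothness, and then bound the max by the sum using $\|\bx^{k+1}-\prox_{\mu_k g}(\bz^k)\|=\beta^{-1}\|\bz^{k+1}-\bz^k\|$. Your remark that $L_{\rho_k}$-smoothness requires $\rho_k\ge\rho_0$ is a fair observation; the paper also uses this without stating it in the lemma's hypotheses.
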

\begin{proof}
Recalling \eqref{Eq: the optimality of z} and substituting it into \eqref{Eq: u-define-stochastic-polyak} or \eqref{Eq: u-define-stochastic-storm}, one obtains
   \begin{align}
           \bu^{k+1}= - \be^{k}+ \nabla Q_{\rho_{k}}(\bx^{k+1}) -\nabla Q_{\rho_{k}}(\bx^{k}) + \frac{1}{\mu_{k}\beta}(\bz^{k}- \bz^{k+1}). \notag
    \end{align}
Consequently, it holds that
    \begin{align}
        \| \bu^{k+1}\|^{2} & \leq 3\| \be^{k}\|^{2}+ 3 \| \nabla Q_{\rho_{k}}(\bx^{k+1}) -\nabla Q_{\rho_{k}}(\bx^{k})\|^{2}+ 3(\mu_{k}\beta)^{-2}\| \bz^{k+1}- \bz^{k}\|^{2}\notag    \\
                           & \leq 3\| \be^{k}\|^{2}+ 3L_{\rho_k}^{2}\| \bx^{k+1}- \bx^{k}\|^{2}+ 3(\mu_{k}\beta)^{-2}\| \bz^{k+1}- \bz^{k}\|^{2}\notag                                 \\
                           & \leq 3\| \be^{k}\|^{2}+ 3\left(L_{\rho_k}^{2}+ (\mu_{k}\beta)^{-2}\right)\| \bw^{k+1}- \bw^{k}\|^{2},
                           \label{Eq: the boundedness of subgradient in proof}
    \end{align}
where the first inequality follows from
    $\| \ba + \bb + \bc \|^{2}\leq 3(\| \ba \|^{2}+ \| \bb \|^{2}+ \| \bc \|^{2})$ and the second inequality uses the $L_{\rho_k}$-smoothness of $Q_{\rho_k}$, establishing \eqref{Eq: the boundedness of subgradient}.

    For the left-hand side of \eqref{Eq: the criticality measure}, one has, by \eqref{Eq: the optimality of z},
    \begin{align}
        \max\{\|\bu^{k+1}\|^{2}, \|\bx^{k+1}- \prox_{\mu_k g}(\bz^{k})\|^{2}\} \leq \|\bu^{k+1}\|^{2}+ \beta^{-2}\|\bz^{k+1}- \bz^{k}\|^{2}. \notag
    \end{align}
Combining this with \eqref{Eq: the boundedness of subgradient in proof} gives \eqref{Eq: the criticality measure}.
\end{proof}
In general, finding a feasible solution for nonconvex constrained optimization problems is challenging. It is thus necessary to characterize infeasible stationarity measures \(\| \nabla \bc(\bx)\bc(\bx)\|^2\). Under a constraint qualification such as \Cref{Ass: constraints conditions}, we can further bound constraint violations \(\| \bc(\bx) \|^{2}\). The following lemma establishes key bounds relating \(\bu\) to these two measures.
\begin{lemma}
    \label{Lemma: the error bound of infeasible stationarity and kkt conditions}
    Suppose
    that the conditions of Lemma \ref{Lemma: approximate-sufficient-descent}
    hold. Let $\underline{\rho}_K:=\min_{0\leq k\leq K-1}\rho_k$. Then, for any $K \geq 1$, it holds that
    \begin{align}
        \label{Eq: the bound of infeasible stationarity-averaging}\frac{1}{K}\sum_{k=0}^{K-1}\| \nabla \bc(\bx^{k+1}) \bc(\bx^{k+1}) \|^{2}\leq \frac{2}{\underline{\rho}_K^{2}K}\sum_{k=0}^{K-1}\| \bu^{k+1}\|^{2}+ \frac{18 G^{2}}{\underline{\rho}_K^{2}}.
    \end{align}

    Moreover, under \Cref{Ass: constraints conditions}, it holds that
    \begin{align}
        \label{Eq: the constraint violation-averaging}
        \frac{1}{K}\sumK \| \bc(\bx^{k+1}) \|^{2}\leq \frac{2}{\underline{\rho}_K^{2}\delta^{2}K}\sumK \| \bu^{k+1}\|^{2}+ \frac{18 G^{2}}{\delta^{2}\underline{\rho}_K^{2}}.
    \end{align}
\end{lemma}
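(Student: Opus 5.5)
We argue pointwise for each $k$ using the KKT-type inclusion for $\bu^{k+1}$ from \eqref{Eq: the definition of u for analysis}, and then average over $k$. From the unified representation established above we have
\[
\bu^{k+1} = \nabla f(\bx^{k+1}) + \rho_k \nabla\bc(\bx^{k+1})\bc(\bx^{k+1}) + \bv_h^{k+1} - \bv_g^k ,
\]
with $\bv_h^{k+1}\in\partial h(\bx^{k+1})$ and $\bv_g^k\in\partial g(\by^k)$. Solving for the penalty term gives
\[
\rho_k\nabla\bc(\bx^{k+1})\bc(\bx^{k+1}) = \bu^{k+1} - \nabla f(\bx^{k+1}) - \bv_h^{k+1} + \bv_g^k .
\]

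Next we bound the norm of this identity. By \Cref{Ass: boundedness of c and f}, each of $\nabla f(\bx^{k+1})$, $\bv_h^{k+1}$ and $\bv_g^k$ has norm at most $G$. Hence $\|\nabla f(\bx^{k+1}) + \bv_h^{k+1} - \bv_g^k\|\le 3G$. Applying $\|\ba+\bb\|^2\le 2\|\ba\|^2+2\|\bb\|^2$ yields
\[
\rho_k^2\|\nabla\bc(\bx^{k+1})\bc(\bx^{k+1})\|^2\le 2\|\bu^{k+1}\|^2 + 18G^2 .
\]
We then divide by $\rho_k^2\ge \underline{\rho}_K^2$. Averaging over $k=0,\dots,K-1$ gives \eqref{Eq: the bound of infeasible stationarity-averaging} with exactly the stated constants.

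For \eqref{Eq: the constraint violation-averaging}, \Cref{Ass: constraints conditions} applies at every iterate, including $\bx^{k+1}$ for $k+1\le K$. It gives $\|\bc(\bx^{k+1})\|^2\le \delta^{-2}\|\nabla\bc(\bx^{k+1})\bc(\bx^{k+1})\|^2$. Substituting this termwise into the average just obtained yields the second bound.

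The only point needing care is that the subgradient bounds are taken at the correct points. The bound on $\bv_g^k$ is used at $\by^k$ rather than at $\bx^{k+1}$, which is fine because the bound in \Cref{Ass: boundedness of c and f} holds uniformly over $\RR^n$. Beyond this there is no real obstacle.
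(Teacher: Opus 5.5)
Your proposal is correct and follows the paper's proof essentially step for step. You isolate $\rho_k\nabla\bc(\bx^{k+1})\bc(\bx^{k+1})$ via the residual inclusion and bound the three subgradient terms by $3G$. You then square using the factor-2 inequality, divide by $\rho_k^2\ge\underline{\rho}_K^2$, average, and finally apply \Cref{Ass: constraints conditions} termwise at $\bx^{k+1}$.
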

\begin{proof}
By \eqref{Eq: the definition of u for analysis}, there exist $\bv_h^{k+1} \in \partial h(\bx^{k+1})$ and $\nabla \calM_{\mu_k g}(\bz^{k}) \in \partial g(\prox_{\mu_k g}(\bz^k))$ such that
\(
\bu^{k+1} = \nabla Q_{\rho_k}(\bx^{k+1}) + \bv_{h}^{k+1} - \nabla \calM_{\mu_k g}(\bz^{k}).
\)
This yields
\begin{align}
    &\frac{1}{\rho_{k}}\| \rho_{k}\nabla \bc(\bx^{k+1}) \bc(\bx^{k+1}) \| \notag \\
    =&\, \frac{1}{\rho_{k}}\| \nabla f(\bx^{k+1}) + \rho_{k}\nabla \bc(\bx^{k+1}) \bc(\bx^{k+1}) - \nabla f(\bx^{k+1}) \| \notag  \\
    =&\, \frac{1}{\rho_{k}}\| \nabla Q_{\rho_k}(\bx^{k+1}) + \bv_{h}^{k+1}- \nabla \calM_{\mu_k g}(\bz^{k}) - \nabla f(\bx^{k+1}) - \bv_{h}^{k+1}+ \nabla \calM_{\mu_k g}(\bz^{k}) \| \notag \\
    \leq&\, \frac{1}{\rho_{k}}\bigl( \| \bu^{k+1}\| + \| \nabla f(\bx^{k+1}) \| + \| \bv_{h}^{k+1}\| + \| \nabla \calM_{\mu_k g}(\bz^{k}) \| \bigr) \notag \\
    \leq&\, \frac{\| \bu^{k+1}\|}{\rho_{k}}+ \frac{3 G}{\rho_{k}}, \label{eq:infeasible-stationarity-measure}
\end{align}
where the last inequality uses $\| \nabla f(\bx^{k+1}) \|$, $\| \bv_{h}^{k+1}\|$, $\| \nabla \calM_{\mu_k g}(\bz^{k}) \| \leq G$ from \Cref{Ass: boundedness of c and f}.
Squaring both sides of \eqref{eq:infeasible-stationarity-measure}
gives
    \begin{align}
        \| \nabla \bc(\bx^{k+1}) \bc(\bx^{k+1}) \|^{2}\leq \frac{2 \| \bu^{k+1}\|^{2}}{\rho_{k}^{2}}+ \frac{18 G^{2}}{\rho_{k}^{2}}.\label{eq:infeasible-stationarity-squared}
    \end{align}
Summing \eqref{eq:infeasible-stationarity-squared} over $k = 0, \dots, K-1$ and
    dividing by $K$, we obtain
    \begin{align*}
        \frac{1}{K}\sumK \| \nabla \bc(\bx^{k+1}) \bc(\bx^{k+1}) \|^{2}\leq \frac{2}{K}\sumK \frac{\| \bu^{k+1}\|^{2}}{\rho_{k}^{2}}+ \frac{18 G^{2}}{K}\sumK \frac{1}{\rho_{k}^{2}},
    \end{align*}
where using $\rho_{k}\geq \underline{\rho}_K$ for $0\leq k\leq K-1$
    establishes \eqref{Eq: the bound of infeasible stationarity-averaging}.

    Under \Cref{Ass: constraints conditions}, it is straightforward to obtain that
    \begin{align*}
        \frac{1}{K}\sumK \| \bc(\bx^{k+1}) \|^{2}\leq \frac{1}{ \delta^{2}K }\sumK \| \nabla \bc(\bx^{k+1}) \bc(\bx^{k+1}) \|^{2},
    \end{align*}
where substituting \eqref{Eq: the bound of infeasible stationarity-averaging}
    into the above inequality yields \eqref{Eq: the constraint violation-averaging},
    completing the proof.
\end{proof}

The following lemma establishes an expected averaged bound on $\EE[\| \bu^{k+1}\|^{2}]$.
\begin{lemma}
    \label{Lemma: key u measure averaged in expectation} Suppose that
    the conditions of Lemma \ref{Lemma: approximate-sufficient-descent}
    hold. Then, for $K \geq 1$, it holds that
\begin{align}
        \label{Eq: the boundedness of subgradient-averaging}
        \frac{1}{K}\sumK \EE[\| \bu^{k+1}\|^{2}]
        \leq{} & \, \frac{24 ( \tfrac{1}{16} + \beta^{-2})}{\mu_{K}K}
        ( \calL_{\rho_0, \mu_0}(\bw^{0}) - \calL^{*}) \notag \\
        &+ \frac{3 + 24 ( \tfrac{1}{16} + \beta^{-2})}{K}
        \sumK \EE[\| \be^{k}\|^{2}]\notag \\
        &+ \frac{24 ( \tfrac{1}{16} + \beta^{-2})}{\mu_{K}K}
        \sumK \left( \frac{\rho_{k+1}- \rho_{k}}{2}C^{2}
        + \EE[\Delta_{k+1}] \right),
    \end{align}
    where $\calL^{*}$  is as defined in \eqref{Eq: uniform lower bound L star}.
\end{lemma}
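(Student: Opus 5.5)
The bound follows by combining the one-step residual estimate of \Cref{Lemma: criticality-stationarity-measure} with the telescoped descent inequality \eqref{eq:sequence-residual-bound} of \Cref{Lemma: approximate-sufficient-descent}. We start from \eqref{Eq: the boundedness of subgradient}, i.e.\ $\|\bu^{k+1}\|^2\le 3\|\be^k\|^2+3(L_{\rho_k}^2+(\mu_k\beta)^{-2})\|\bw^{k+1}-\bw^k\|^2$. Using $\mu_kL_{\rho_k}\le \tfrac14$ from \eqref{Eq: basic parameter conditions for both algorithms}, we get $L_{\rho_k}^2\le \tfrac{1}{16}\mu_k^{-2}$. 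Hence
\[
3\bigl(L_{\rho_k}^2+(\mu_k\beta)^{-2}\bigr)\le 3\bigl(\tfrac1{16}+\beta^{-2}\bigr)\mu_k^{-2}.
\]

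Next we insert \eqref{eq:sequence-residual-bound}. Since $1-2\mu_kL_{\rho_k}\ge \tfrac12$, it gives $\|\bw^{k+1}-\bw^k\|^2\le 8\mu_k\bigl(\calL_k-\calL_{k+1}+\Delta_{k+1}+\tfrac{\rho_{k+1}-\rho_k}{2}C^2+\mu_k\|\be^k\|^2\bigr)$, where $\calL_k:=\calL_{\rho_k,\mu_k}(\bw^k)$. Multiplying by $3(\tfrac1{16}+\beta^{-2})\mu_k^{-2}$ produces the factor $24(\tfrac1{16}+\beta^{-2})\mu_k^{-1}$ on the bracket. Inside the bracket, the term $\mu_k\|\be^k\|^2$ becomes $24(\tfrac1{16}+\beta^{-2})\|\be^k\|^2$. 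Adding the standalone $3\|\be^k\|^2$ yields exactly the coefficient $3+24(\tfrac1{16}+\beta^{-2})$ in the statement.

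Then we replace $\mu_k^{-1}$ by $\mu_K^{-1}$, using that $\{\mu_k\}$ is non-increasing so $\mu_k^{-1}\le\mu_K^{-1}$ for $k\le K-1$. This step needs care: the differences $\calL_k-\calL_{k+1}$ may be negative, so we cannot enlarge their weights termwise. The order of operations fixes this.
\begin{itemize}
\item[1.] First take total expectation of the descent inequality \eqref{eq:approximate-sufficient-descent}.
\item[2.] Then use the rearranged, nonnegative form $\mu_k^{-1}\|\bw^{k+1}-\bw^k\|^2\cdot\tfrac18\le \calL_k-\calL_{k+1}+(\text{nonnegative terms})$. The left side is nonnegative for every $k$.
\item[3.] Multiply this inequality by $\mu_k^{-1}\le\mu_K^{-1}$.
\item[4.] Sum over $k=0,\dots,K-1$ with the uniform weight $\mu_K^{-1}$.
\end{itemize}
After the uniform weight is in place, the differences telescope to $\calL_0-\calL_K\le \calL_{\rho_0,\mu_0}(\bw^0)-\calL^*$, by the uniform lower bound \eqref{Eq: uniform lower bound L star}.

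Justifying step 3 is the main point where the argument could fail. Writing $a_k:=\mu_k^{-1}\|\bw^{k+1}-\bw^k\|^2\ge0$, we have $a_k/\mu_k\le a_k/\mu_K$. The bound for $\sum_k a_k$ is then a telescoping sum plus nonnegative terms. So we apply the monotonicity of $\mu_k$ to the nonnegative quantity $a_k$ before telescoping, never to the signed differences. If a uniform weight were not available, the fallback would be summation by parts: under monotone $\mu_k$ it produces $\mu_K^{-1}\calL_0$-type boundary terms plus terms controlled by $\calL^*$.

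Finally, dividing by $K$ and keeping the $\Delta_{k+1}$ and $\rho$-increment terms with weight $24(\tfrac1{16}+\beta^{-2})/(\mu_KK)$ gives \eqref{Eq: the boundedness of subgradient-averaging}.
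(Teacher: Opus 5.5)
Your route is the paper's: bound $3(L_{\rho_k}^2+(\mu_k\beta)^{-2})$ using $\mu_kL_{\rho_k}\le\frac14$, insert \eqref{eq:sequence-residual-bound}, pass to the uniform weight $\mu_K^{-1}$, and telescope against $\calL^*$. You are right that the uniform weight may only be applied to a nonnegative quantity; the paper uses this silently. However, when $\mu_k$ varies, your claimed error coefficient cannot be reconciled with that very observation.

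Put $A:=\frac{1}{16}+\beta^{-2}$ and $B_k:=\calL_{\rho_k,\mu_k}(\bw^k)-\calL_{\rho_{k+1},\mu_{k+1}}(\bw^{k+1})+\Delta_{k+1}+\frac{\rho_{k+1}-\rho_k}{2}C^2+\mu_k\|\be^k\|^2\ge 0$. Then $\|\bu^{k+1}\|^2\le 3\|\be^k\|^2+24A\mu_k^{-1}B_k$. There are two ways to continue, and you use one for the coefficient and the other for the telescoping:
\begin{itemize}
\item To get the coefficient $3+24A$, you split $\mu_k\|\be^k\|^2$ off $B_k$ while the weight is still $\mu_k^{-1}$. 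The remainder is then signed, so its weight cannot be raised to $\mu_K^{-1}$.
\item Your steps 2--4 instead raise the weight on all of $B_k$. That is legitimate, but then the error contribution is $24A\frac{\mu_k}{\mu_K}\|\be^k\|^2\ge 24A\|\be^k\|^2$.
\end{itemize}
These two halves are compatible only if $\mu_k\equiv\mu_K$. Summation by parts does not rescue this. It yields terms $(\mu_k^{-1}-\mu_{k-1}^{-1})\calL_{\rho_k,\mu_k}(\bw^k)$ with nonnegative weights, which need upper bounds on the potential, and $\calL^*$ is only a lower bound. The upper bounds available from \eqref{eq:approximate-sufficient-descent} bring back $\sum_j\mu_j\|\be^j\|^2$ with weights up to $\mu_K^{-1}$.

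For a general nonincreasing schedule, what your argument actually proves is the stated inequality with the middle term replaced by $\frac{3}{K}\sumK\EE[\|\be^k\|^2]+\frac{24A}{\mu_K K}\sumK\mu_k\EE[\|\be^k\|^2]$. This form pairs naturally with \eqref{Eq: weighted cumulative error bound for Polyak}, which controls exactly $\frac1K\sumK\mu_k\EE[\|\be^k\|^2]$.

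The paper's proof contains the identical slip. It bounds the coefficient uniformly by $\frac{2A}{\mu_K}$, multiplies the whole bracket, and then writes $24A$ where its own computation gives $24A\mu_k/\mu_K$. With $\mu_k\equiv\mu$ (so $\Delta_{k+1}=0$), both arguments are correct and the discrepancy disappears. That constant case is the only one used in the subsequent constant-parameter lemmas and theorems.
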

\begin{proof}
Taking expectations in \eqref{Eq: the boundedness of subgradient} and averaging over $k = 0,\dots, K-1$, one obtains
    \begin{align}
        \frac{1}{K}\sum_{k=0}^{K-1}\EE[\| \bu^{k+1}\|^{2}] \leq \frac{3}{K}\sum_{k=0}^{K-1}( L_{\rho_k}^{2}+ (\mu_{k}\beta )^{-2}) \EE[\| \bw^{k+1}- \bw^{k}\|^{2}] + \frac{3}{K}\sumK \EE[\| \be^{k}\|^{2}]. \label{eq:subgradient-averaging-first}
    \end{align}
Next, substituting \eqref{eq:sequence-residual-bound} into the first term of \eqref{eq:subgradient-averaging-first} gives
\begin{align}
  &\frac{3}{K}\sum_{k=0}^{K-1}
  ( L_{\rho_k}^{2}+ (\mu_{k}\beta)^{-2})
  \EE[\| \bw^{k+1}- \bw^{k}\|^{2}] \notag \\
   \leq{} & \,\frac{12}{K}\sum_{k=0}^{K-1}
   \frac{\mu_{k}^{2}L_{\rho_k}^{2}+ \beta^{-2}}
   {\mu_{k}(1 - 2 \mu_{k}L_{\rho_k})}\biggl(
   \EE\bigl[
   \calL_{\rho_k, \mu_k}(\bw^{k})
   - \calL_{\rho_{k+1}, \mu_{k+1}}(\bw^{k+1})
   \bigr] \notag \\
   &\qquad\qquad\qquad
   + \frac{\rho_{k+1}- \rho_{k}}{2}C^{2}
   + \EE[\Delta_{k+1}]
   +\mu_{k}\EE[\| \be^{k}\|^{2}]
   \biggr).
   \label{eq:subgradient-averaging-second}
    \end{align}
    Given $\mu_{k}\geq\mu_{K}$ and $\mu_{k}L_{\rho_k}\leq \tfrac{1}{4}$ from \eqref{Eq: basic parameter conditions for both algorithms}, it holds that
\begin{align}
    \frac{\mu_{k}^{2}L_{\rho_k}^{2}+ \beta^{-2}}{\mu_{k}( 1 - 2 \mu_{k}L_{\rho_k})}\leq \frac{2}{\mu_{K}}( \frac{1}{16}+ \beta^{-2}),
\end{align}
where we use the inequality $\frac{v^2 + \beta^{-2}}{1 - 2v}\leq 2 (\tfrac{1}{16} + \beta^{-2})$ with $v = \mu_{k}L_{\rho_k}$.
Then, substituting this into \eqref{eq:subgradient-averaging-second} yields
\begin{align}
    & \frac{3}{K}\sum_{k=0}^{K-1}( L_{\rho_k}^{2}+ (\beta \mu_{k})^{-2})
    \EE[\| \bw^{k+1}- \bw^{k}\|^{2}] \notag \\
    \leq &\,\frac{24 ( \tfrac{1}{16} + \beta^{-2})}{\mu_{K}K}
    \bigl( \calL_{\rho_0, \mu_0}(\bw^{0}) - \calL^{*}\bigr) + \frac{24 ( \tfrac{1}{16} + \beta^{-2})}{\mu_{K}K} \sumK \bigl( \frac{\rho_{k+1}- \rho_{k}}{2} C^{2} + \EE[\Delta_{k+1}]\bigr) \notag \\
    & + \frac{24 ( \tfrac{1}{16} + \beta^{-2})}{K}\sumK \EE[\| \be^{k}\|^{2}]. \notag
\end{align}
Combining this with the second term of \eqref{eq:subgradient-averaging-first}, we obtain the desired inequality, completing the proof.
\end{proof}
We now proceed to estimate the quantities in \eqref{Eq: approximate KKT point} and \eqref{Eq: approximate stationarity point} for the output point $\bx^{R+1}$. The result follows directly from \Cref{Lemma: the error bound of infeasible stationarity and kkt conditions}
and \Cref{Lemma: key u measure averaged in expectation}, and the proof is omitted for brevity.
\begin{lemma}
    \label{Lemma: the error bound of the infeasible stationarity in expectation}
    Suppose that
    the conditions of Lemma \ref{Lemma: approximate-sufficient-descent}
    hold. Let $\underline{\rho}_K:=\min_{0\leq k\leq K-1}\rho_k$. Then, it holds that
\begin{align}
        & \max\{ \EE[\| \bu^{R+1}\|^{2}],
        \EE [\| \bx^{R+1}- \prox_{\mu_R g}(\bz^{R})\|^{2}]\} \notag \\
        \leq{} & \frac{24( \tfrac{1}{16} + \beta^{-2}+ \tfrac{1}{3}(\mu_{0}/ \beta)^{2})}{\mu_{K}K}
        (\calL_{\rho_0, \mu_0}(\bw^{0}) - \calL^{*})
        + \frac{24( \tfrac{1}{16} + \beta^{-2}+ \tfrac{1}{3}(\mu_{0}/ \beta)^{2})}{\mu_{K}K}
        \sumK \left( \frac{\rho_{k+1}- \rho_{k}}{2} C^{2} + \EE[\Delta_{k+1}] \right)
        \notag\\
        +{} & \frac{24( \tfrac{1}{16} + \beta^{-2}+ \tfrac{1}{3}(\mu_{0}/ \beta)^{2}) + 3}{K}
        \sumK \EE[\| \be^{k}\|^{2}],
        \label{Eq: the criticality measure error bound in expectation}
    \end{align}
    and
\begin{align}
        & \EE [\|\nabla \bc(\bx^{R+1}) \bc(\bx^{R+1}) \|^{2}] \notag\\
        \leq{} & \frac{48(\tfrac{1}{16}+ \beta^{-2})}{\underline{\rho}_K^{2}\mu_{K}K}
        \biggl(\calL_{\rho_0, \mu_0}(\bw^{0}) - \calL^{*}
        + \sumK \left( \frac{\rho_{k+1}- \rho_{k}}{2}C^{2}
        + \EE [\Delta_{k+1}] \right) \biggr) \notag \\
        +{} & \frac{18 G^{2}}{\underline{\rho}_K^{2}}
        + \frac{6 + 48(\tfrac{1}{16}+ \beta^{-2})}{\underline{\rho}_K^{2}K }
        \sumK \EE[\| \be^{k}\|^{2}].
        \label{Eq: the infeasible stationarity error bound in expectation}
    \end{align}
    Moreover, under \Cref{Ass: constraints conditions}, it holds that
\begin{align}
        & \EE[\| \bc(\bx^{R+1}) \|^{2}] \notag\\
        \leq{} & \frac{48(\tfrac{1}{16}+ \beta^{-2})}{\delta^{2}\underline{\rho}_K^{2}\mu_{K}K}
        \biggl(\calL_{\rho_0, \mu_0}(\bw^{0}) - \calL^{*}
        + \sumK \left( \frac{\rho_{k+1}- \rho_{k}}{2}C^{2}
        + \EE[ \Delta_{k+1}] \right) \biggr) \notag\\
       +{} & \frac{18 G^{2}}{\delta^{2}\underline{\rho}_K^{2}}
       +\frac{6 + 48(\tfrac{1}{16}+ \beta^{-2})} {\delta^{2}\underline{\rho}_K^{2}K }
       \sumK \EE[\| \be^{k}\|^{2}].
       \label{Eq: the constraints violations error bound in expectation}
    \end{align}
    
\end{lemma}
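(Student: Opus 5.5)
The statement to prove is \Cref{Lemma: the error bound of the infeasible stationarity in expectation}, the last lemma above. I would derive it from \Cref{Lemma: key u measure averaged in expectation}, \Cref{Lemma: criticality-stationarity-measure} and \Cref{Lemma: the error bound of infeasible stationarity and kkt conditions}. The first observation is that $R$ is uniform on $\{0,\dots,K-1\}$ and independent of the iterates. Hence for any nonnegative sequence of random variables $a^{k+1}$ we have $\EE[a^{R+1}]=\frac1K\sumK\EE[a^{k+1}]$. This reduces all three claims to averaged bounds over $k$.

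\textbf{Criticality bound.} I would first bound each summand pointwise. Since $\bx^{k+1}-\prox_{\mu_k g}(\bz^k)=\beta^{-1}(\bz^{k+1}-\bz^k)$ by \eqref{Eq: the optimality of z}, we get
\[
\max\{\|\bu^{k+1}\|^2,\|\bx^{k+1}-\prox_{\mu_k g}(\bz^k)\|^2\}\le \|\bu^{k+1}\|^2+\beta^{-2}\|\bw^{k+1}-\bw^k\|^2 .
\]
Averaging in expectation, the $\|\bu^{k+1}\|^2$ part is handled directly by \eqref{Eq: the boundedness of subgradient-averaging}.

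For the extra term $\beta^{-2}\|\bw^{k+1}-\bw^k\|^2$, I would insert \eqref{eq:sequence-residual-bound}. With $\mu_k L_{\rho_k}\le \tfrac14$ this gives $\|\bw^{k+1}-\bw^k\|^2\le 8\mu_k(\cdots)$. The goal is to write the resulting coefficient in the form $24\cdot\tfrac13(\mu_0/\beta)^2/\mu_K$, using $\mu_k\le\mu_0$ and $\mu_k\ge\mu_K$. The natural bound gives $8\beta^{-2}\mu_k\le 8\beta^{-2}\mu_0$, and then $8\mu_0\le 8\mu_0^2/\mu_K$ because $\mu_0/\mu_K\ge1$.

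The bracket in \eqref{eq:sequence-residual-bound} also contains a stochastic error term $\mu_k\|\be^k\|^2$. Its coefficient becomes $8\beta^{-2}\mu_k^2\le 8\mu_0^2\beta^{-2}$, which I would again dominate by the corresponding $\mu_K^{-1}$-free constant in the stated $\be$-coefficient. This is possible because $\mu_k\le \mu_0$ and the stated factor $24\cdot\tfrac13(\mu_0/\beta)^2$ absorbs it.

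The telescoping sum $\sum_k\EE[\calL_{\rho_k,\mu_k}(\bw^k)-\calL_{\rho_{k+1},\mu_{k+1}}(\bw^{k+1})]$ is bounded by $\calL_{\rho_0,\mu_0}(\bw^0)-\calL^*$ via \eqref{Eq: uniform lower bound L star}. Adding the two pieces gives \eqref{Eq: the criticality measure error bound in expectation}.

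\textbf{Infeasible stationarity and constraint violation.} I would take expectations in \eqref{Eq: the bound of infeasible stationarity-averaging} and use the uniform-$R$ identity, so that $\EE\|\nabla\bc(\bx^{R+1})\bc(\bx^{R+1})\|^2\le \frac{2}{\underline\rho_K^2K}\sumK\EE\|\bu^{k+1}\|^2+\frac{18G^2}{\underline\rho_K^2}$. Substituting \eqref{Eq: the boundedness of subgradient-averaging} multiplies its constants by $2$. This produces exactly the factors $48(\tfrac1{16}+\beta^{-2})$ and $6+48(\tfrac1{16}+\beta^{-2})$ in \eqref{Eq: the infeasible stationarity error bound in expectation}. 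Under \Cref{Ass: constraints conditions}, \eqref{Eq: the constraint violation-averaging} gives the same computation with an extra factor $\delta^{-2}$, which yields \eqref{Eq: the constraints violations error bound in expectation}.

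\textbf{Main obstacle.} The only delicate step is the constant bookkeeping for the additional $\beta^{-2}\|\bw^{k+1}-\bw^k\|^2$ term. One has to check that the factors $\mu_k\le\mu_0$ and $1/\mu_k\le1/\mu_K$ combine into the advertised $\tfrac13(\mu_0/\beta)^2/\mu_K$ form, since dimensionally this requires $\mu_0\cdot\mu_0/\mu_K\ge\mu_0$. If the constants do not match exactly, I would fall back on a slightly larger absolute constant; this does not affect any of the subsequent rate arguments.
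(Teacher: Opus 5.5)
Your proposal is correct and follows essentially the route the paper intends, which it states but omits. You reduce to averages via the uniform index $R$, bound the criticality maximum by $\|\bu^{k+1}\|^2+\beta^{-2}\|\bw^{k+1}-\bw^k\|^2$ and combine \Cref{Lemma: key u measure averaged in expectation} with \eqref{eq:sequence-residual-bound}, and feed the averaged $\bu$-bound into \Cref{Lemma: the error bound of infeasible stationarity and kkt conditions}. Your bookkeeping reproduces the stated constants exactly: the extra term contributes $8\mu_0^2\beta^{-2}/\mu_K = 24\cdot\tfrac13(\mu_0/\beta)^2/\mu_K$ and adds $8\mu_0^2\beta^{-2}$ to the error coefficient, so the fallback to a larger constant is unnecessary.
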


\subsection{\bf{Oracle Complexity Analysis of MoSSP-P}}
\label{Subsection: Lemmas for Oracle complexity analysis of Algo 1}
We begin by establishing a recursive relationship between $\EE[\|\be^{k+1}\|^{2}]$ and $\EE[\|\be^{k}\|^{2}]$.
The following lemma is adapted from \citep[Lemma 5.2,][]{gao2024non}. For completeness, we provide the proof.
\begin{lemma}
    \label{Lemma: Recursive gradient error with Polyak momentum}
    Suppose that Assumptions \ref{Ass: boundedness of c and f}-\ref{Ass: unbiased gradient oracle} hold. Then, for any $k \geq 0$ and $0 < \alpha_k < 1$, it holds that
    \begin{align}
        \label{Eq: the recursive gradient relationship in Polyak momentum scheme}
        \EE[\|\be^{k+1}\|^{2}] \leq (1 - \alpha_{k})\EE[\|\be^{k}\|^{2}] + \frac{1}{\alpha_{k}}L^2_{f}\EE[\|\bw^{k+1}- \bw^{k}\|^{2}] + \alpha_{k}^{2}\sigma^{2}.
\end{align}

\end{lemma}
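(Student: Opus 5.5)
The plan is to write the error recursion explicitly and then split it into a martingale-difference part and a deterministic drift part. First observe that the penalty part of $\bS^k$ is computed exactly. Since $\nabla Q_{\rho_k}(\bx^k)=\nabla f(\bx^k)+\rho_k\nabla\bc(\bx^k)\bc(\bx^k)$, definition \eqref{Eq: the stochastic error} gives $\be^k=\bs^k-\nabla f(\bx^k)$ for every $k$, independently of $\rho_k$. Hence $\be^{k+1}=\bs^{k+1}-\nabla f(\bx^{k+1})$, and the changing penalty parameter plays no role.

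Next, substitute the momentum update $\bs^{k+1}=(1-\alpha_k)\bs^k+\alpha_k\nabla\tf(\bx^{k+1},\xi^{k+1})$ and add and subtract $(1-\alpha_k)\nabla f(\bx^k)$. This yields the decomposition
\[
\be^{k+1}=(1-\alpha_k)\bigl(\be^k+\nabla f(\bx^k)-\nabla f(\bx^{k+1})\bigr)+\alpha_k\bigl(\nabla\tf(\bx^{k+1},\xi^{k+1})-\nabla f(\bx^{k+1})\bigr).
\]
The first bracket is measurable with respect to $\xi^{[k]}$, because $\bx^{k+1}$ is computed from $\bS^k$ and $\bz^k$. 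The second term has zero conditional mean given $\xi^{[k]}$, by \Cref{Ass: unbiased gradient oracle} and the independence of $\xi^{k+1}$ from the past. Expanding the square and conditioning on $\xi^{[k]}$ therefore kills the cross term. Using the variance bound, we obtain
\[
\EE\|\be^{k+1}\|^2\le(1-\alpha_k)^2\,\EE\|\be^k+\nabla f(\bx^k)-\nabla f(\bx^{k+1})\|^2+\alpha_k^2\sigma^2 .
\]

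Finally, apply Young's inequality $\|\ba+\bb\|^2\le(1+\alpha_k)\|\ba\|^2+(1+\alpha_k^{-1})\|\bb\|^2$ with $\ba=\be^k$ and $\bb=\nabla f(\bx^k)-\nabla f(\bx^{k+1})$. For $\alpha_k\in(0,1)$ the coefficients satisfy two elementary estimates:
\[
(1-\alpha_k)^2(1+\alpha_k)=(1-\alpha_k)(1-\alpha_k^2)\le 1-\alpha_k,\qquad (1-\alpha_k)^2(1+\alpha_k^{-1})=\frac{(1-\alpha_k)^2(1+\alpha_k)}{\alpha_k}\le\frac{1}{\alpha_k}.
\]
Then the $L_f$-smoothness of $f$ (\Cref{Ass: boundedness of c and f}) gives $\|\bb\|^2\le L_f^2\|\bx^{k+1}-\bx^k\|^2\le L_f^2\|\bw^{k+1}-\bw^k\|^2$, which produces exactly \eqref{Eq: the recursive gradient relationship in Polyak momentum scheme}.

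The only delicate step is the conditional-expectation argument: it must be checked that $\bx^{k+1}$, and hence $\bb$, depends only on $\xi^{[k]}$ and not on $\xi^{k+1}$. With that filtration bookkeeping the cross term vanishes, and the rest is routine. The indexing of $\alpha_k$ also has to match $\bs^{k+1}$, which uses $\alpha_k$ by the update rule.
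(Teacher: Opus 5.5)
Your proof is correct and follows the paper's argument essentially step for step. Both use the reduction $\be^k=\bs^k-\nabla f(\bx^k)$, split the error into a $\xi^{[k]}$-measurable drift term plus a zero-mean noise term so the cross term vanishes under conditioning, and finish with Young's inequality and $L_f$-smoothness; the only difference is your Young parameter $\gamma=\alpha_k$ in place of the paper's $\gamma=\alpha_k/(1-\alpha_k)$, and your two coefficient estimates correctly recover the same bound.
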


\begin{proof}
Let $\xi^{[k]} := \{\xi^0,\ldots,\xi^k\}$ denote the collection of i.i.d. samples drawn up to iteration $k$ in MoSSP-P, and let $\EE[\cdot \mid \xi^{[k]}]$ denote the corresponding conditional expectation.
Recalling \eqref{Eq: the stochastic error} and \eqref{Eq: enhanced gradient estimator Polyak}, one has
\begin{align}
&\EE[\|\be^{k+1}\|^2|\xi^{[k]}] \notag \\
=&\, \EE[\|\bS^{k+1}- \nabla Q_{\rho_{k+1}}(\bx^{k+1})\|^2|\xi^{[k]}] \notag\\
=&\, \EE[\|\bs^{k+1} - \nabla f(\bx^{k+1})\|^2|\xi^{[k]}] \notag \\
=&\, \EE[\|(1-\alpha_k)(\bs^k - \nabla f(\bx^{k+1})) + \alpha_k(\nabla \tf(\bx^{k+1},\xi^{k+1}) - \nabla f(\bx^{k+1}))\|^2 |\xi^{[k]}] \notag \\
=&\, (1-\alpha_k)^2 \EE[\|\bs^k - \nabla f(\bx^{k+1})\|^2|\xi^{[k]}]  + \alpha_k^2 \EE[\|\nabla \tf(\bx^{k+1},\xi^{k+1})-\nabla f(\bx^{k+1})\|^2|\xi^{[k]}] \notag \\
&\quad + 2(1-\alpha_k)\alpha_k \EE[\langle \bs^k - \nabla f(\bx^{k+1}),\nabla \tf(\bx^{k+1},\xi^{k+1})-\nabla f(\bx^{k+1})\rangle|\xi^{[k]}].  \label{g-bnd}
\end{align}
Notice that it follows from \Cref{Ass: unbiased gradient oracle} that
\begin{align}
&\EE[\langle\bs^k - \nabla f(\bx^{k+1}), \nabla \tf(\bx^{k+1},\xi^{k+1})-\nabla f(\bx^{k+1})\rangle|\xi^{[k]}]\notag \\
=&\,\langle\bs^k - \nabla f(\bx^{k+1}),  \EE[\nabla \tf(\bx^{k+1},\xi^{k+1})-\nabla f(\bx^{k+1})|\xi^{[k]}] \rangle
=0.
\end{align}
Then, substituting this into \eqref{g-bnd} yields
\begin{align}
&\EE[\|\be^{k+1}\|^2|\xi^{[k]}] \notag \\
=&\,  (1-\alpha_k)^2\EE[\|\bs^k - \nabla f(\bx^{k}) +  \nabla f(\bx^{k}) - \nabla f(\bx^{k+1})\|^2|\xi^{[k]}] \notag \\
&\quad + \alpha_k^2 \EE[\|\nabla \tf(\bx^{k+1},\xi^{k+1}) - \nabla f(\bx^{k+1})\|^2 |\xi^{[k]}] \notag \\
\leq&\, (1-\alpha_k)^2 ( \frac{1}{1-\alpha_k}\EE[\|\be^k\|^2 |\xi^{[k]}] +  \frac{1}{\alpha_k}\|\nabla f(\bx^{k}) - \nabla f(\bx^{k+1})\|^2) + \alpha_k^2 \sigma^2 \notag \\
=&\, (1-\alpha_k)\EE[\|\be^k\|^2|\xi^{[k]}] + \frac{(1-\alpha_k)^2}{\alpha_k}\|\nabla f(\bx^{k}) - \nabla f(\bx^{k+1})\|^2 + \alpha_k^2 \sigma^2 \notag \\
\leq&\, (1-\alpha_k)\EE[\|\be^k\|^2|\xi^{[k]}] + \frac{1}{\alpha_k}L_f^2\|\bw^{k+1}- \bw^{k}\|^{2} + \alpha_k^2 \sigma^2,
\end{align}
where the second inequality uses Young's inequality $\|a+b\|^2 \leq (1+\gamma)\|a\|^2 + (1+\gamma^{-1})\|b\|^2$ with $\gamma = \frac{\alpha_k}{1-\alpha_k} > 0$ and the bounded variance condition in \Cref{Ass: unbiased gradient oracle}, and the third inequality uses the $L_f$-smoothness in \Cref{Ass: boundedness of c and f} along with $(1-\alpha_k)^2 \leq 1$ for $0 < \alpha_k < 1$. Taking the full expectation on both sides of this inequality yields \eqref{Eq: the recursive gradient relationship in Polyak momentum scheme}, completing the proof.
\end{proof}
As can be seen from \Cref{Lemma: the error bound of the infeasible stationarity in expectation}, the convergence rate is affected by the accumulated error $\frac{1}{K}\sum_{k=0}^{K-1} \EE [\|\be^{k}\|^{2}]$. To establish an upper bound for this accumulated error, we impose the following parameter setting
\begin{align}
    \label{Eq: the additional parameters assumptions in Polyak momentum}
   \frac{1}{\alpha_{k}}\leq \frac{1}{2\mu_{k}}, \, L_{\rho_k}+ \frac{2}{\alpha_{k}}L^2_{f}\leq \frac{3}{4}\frac{1}{2\mu_{k}}.
\end{align}
\begin{lemma}
    \label{Lemma: Weighted error accumulation of Polyak} Suppose that Assumptions \ref{Ass: boundedness of c and f}-\ref{Ass: unbiased gradient oracle} hold, and the
    parameters satisfy \eqref{Eq: basic parameter conditions for both algorithms}
    and \eqref{Eq: the additional parameters assumptions in Polyak momentum}. Then,
    it holds that  for $K\geq 1$,
        \begin{align}
        \label{Eq: weighted cumulative error bound for Polyak}
        &\frac{1}{K}\sumK \mu_{k}\EE [\| \be^{k}\|^{2}]  \notag \\
        \leq&\,\frac{ 3( \calL_{\rho_0, \mu_0}(\bw^{0}) - \calL^{*})}{K}+ \frac{2 \EE [\| \be^{0}\|^{2}]}{K}+ \frac{3}{K}\sumK ( \tfrac{C^2}{2} (\rho_{k+1}- \rho_{k}) + \EE[\Delta_{k+1}] ) + \frac{2}{K}\sumK \alpha_{k}^{2}\sigma^{2}.
    \end{align}
Furthermore, if we set
    $\mu_{k}\equiv \mu , \, \alpha_{k}\equiv \alpha ,\, \rho_{k}\equiv \rho$
    for $k \geq 1$, it holds that for any $K\ge1,$
    \begin{align}
        \label{Eq: weighted cumulative error bound under constant parameters for Polyak}\frac{1}{K}\sumK \EE [ \| \be^{k}\|^{2}] \leq \frac{ 3 ( \calL_{\rho, \mu}(\bw^{0}) - \calL^{*}) }{\mu K}+ \frac{2\EE [\| \be^{0}\|^{2}]}{\mu K}+ \frac{2\alpha^{2}\sigma^{2}}{\mu}.
    \end{align}
\end{lemma}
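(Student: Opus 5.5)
The plan is a Lyapunov argument: combine the descent inequality of \Cref{Lemma: approximate-sufficient-descent} with the Polyak error recursion of \Cref{Lemma: Recursive gradient error with Polyak momentum}. Define the merged potential
\[
\Phi_k := \calL_{\rho_k,\mu_k}(\bw^k) + \tfrac{2}{3}\,\EE$-free term$\ \mu_k\|\be^k\|^2 \cdot c
\]
more concretely $\Phi_k := \EE[\calL_{\rho_k,\mu_k}(\bw^k)] + \eta_k \EE[\|\be^k\|^2]$ with weight $\eta_k$ proportional to $\mu_k/\alpha_k$ (I will try $\eta_k = 2\mu_k/\alpha_k\cdot \alpha_k = 2\mu_k$ first; see below). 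The goal is a per-step inequality of the form
\[
\Phi_{k+1}\le \Phi_k - c\,\mu_k\EE[\|\be^k\|^2] + \tfrac{\rho_{k+1}-\rho_k}{2}C^2 + \EE[\Delta_{k+1}] + 2\alpha_k^2\sigma^2 ,
\]
whose telescoping sum over $k=0,\dots,K-1$, divided by $K$, together with the lower bound $\calL_{\rho_K,\mu_K}(\bw^K)\ge \calL^*$ from \eqref{Eq: uniform lower bound L star}, gives \eqref{Eq: weighted cumulative error bound for Polyak}.

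Concretely, I would multiply the recursion \eqref{Eq: the recursive gradient relationship in Polyak momentum scheme} by a weight $\gamma_k$ and rewrite it as $\alpha_k\gamma_k\EE\|\be^k\|^2 \le \gamma_k(\EE\|\be^k\|^2-\EE\|\be^{k+1}\|^2) + \gamma_k\alpha_k^{-1}L_f^2\EE\|\bw^{k+1}-\bw^k\|^2+\gamma_k\alpha_k^2\sigma^2$. Choosing $\gamma_k = 2\mu_k/\alpha_k$ makes the left side equal to $2\mu_k\EE\|\be^k\|^2$. The condition $1/\alpha_k\le 1/(2\mu_k)$ gives $\gamma_k\le 1$... but I actually need $\gamma_k \ge$ something and $\gamma_k\alpha_k^2\sigma^2=2\mu_k\alpha_k\sigma^2\le \alpha_k^2\sigma^2$ since $2\mu_k\le\alpha_k$, matching the $2\alpha_k^2\sigma^2$ term up to constants. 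The variation term becomes $2\mu_k\alpha_k^{-2}L_f^2\|\bw^{k+1}-\bw^k\|^2\le \alpha_k^{-1}L_f^2\|\bw^{k+1}-\bw^k\|^2$. Adding this to the expected descent inequality \eqref{eq:approximate-sufficient-descent} yields a coefficient of $\|\bw^{k+1}-\bw^k\|^2$ of $-\tfrac12\bigl((2\mu_k)^{-1}-L_{\rho_k}-\tfrac{2}{\alpha_k}L_f^2\bigr)$, which is nonpositive precisely by the second condition in \eqref{Eq: the additional parameters assumptions in Polyak momentum} (it gives $L_{\rho_k}+2L_f^2/\alpha_k\le \tfrac34(2\mu_k)^{-1}$). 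The $+\mu_k\|\be^k\|^2$ in the descent lemma is absorbed by the $2\mu_k\|\be^k\|^2$ on the left, leaving $\mu_k\EE\|\be^k\|^2$.

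The main obstacle is the telescoping of $\gamma_k(\EE\|\be^k\|^2-\EE\|\be^{k+1}\|^2)$ when $\gamma_k$ varies. Since $\mu_k$ is nonincreasing but $\alpha_k$ is not assumed monotone, the cleanest fix is to scale the inequalities so that the error weight is constant: use $\gamma\equiv 1$ (allowed since $2\mu_k\le\alpha_k$, so $\alpha_k\EE\|\be^k\|^2\ge 2\mu_k\EE\|\be^k\|^2$). Then the error terms telescope exactly to at most $\EE\|\be^0\|^2$. The variation term is $\alpha_k^{-1}L_f^2$, still dominated by the condition above. The constants 3 and 2 then arise from dividing by the residual coefficient and bounding $\alpha_k^2\sigma^2$ weights. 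Finally, for constant $\mu,\alpha,\rho$, we have $\rho_{k+1}-\rho_k=0$ and $\Delta_{k+1}=0$, because $\mu_k=\mu_{k+1}$. Dividing by $\mu$ gives \eqref{Eq: weighted cumulative error bound under constant parameters for Polyak}.
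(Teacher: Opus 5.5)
Your final route is correct and essentially the paper's proof: put unit weight ($\gamma\equiv 1$) on the Polyak recursion, use $2\mu_k\le\alpha_k$ to absorb the $+\mu_k\|\be^k\|^2$ from the descent lemma, use the second condition in \eqref{Eq: the additional parameters assumptions in Polyak momentum} to make the coefficient of $\|\bw^{k+1}-\bw^k\|^2$ at most $-1/(16\mu_k)$, and telescope against $\calL^*$. This argument gives the bound with every constant equal to $1$, which implies the stated one because all right-hand terms are nonnegative; your explanation of where the $3$ and $2$ come from is therefore not needed (in the paper they arise only from an extra, avoidable substitution of \eqref{eq:sequence-residual-bound}).
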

\begin{proof}
Recalling \eqref{eq:approximate-sufficient-descent}, we take full expectation on both
    sides of the inequality and substitute it into \eqref{Eq: the recursive gradient relationship in Polyak momentum scheme}
    to yield
    \begin{equation}
        \label{Eq: the descent property of potential function with recursive gradient}
\begin{aligned}
&\EE [ \calL_{\rho_{k+1}, \mu_{k+1}}(\bw^{k+1}) + \| \be^{k+1}\|^{2}] \\
\leq{}&
\EE [ \calL_{\rho_k, \mu_k}(\bw^{k}) + \| \be^{k}\|^{2}] + \frac{1}{2}(- \frac{1}{2\mu_{k}}+ L_{\rho_k}+ \frac{2}{\alpha_{k}}L_f^{2})
\EE [ \| \bw^{k+1}- \bw^{k}\|^{2}] \\
&+ (\mu_{k} - \alpha_k)\EE [\| \be^{k}\|^{2}]
+ \frac{\rho_{k+1}- \rho_{k}}{2}C^{2}
+ \EE[\Delta_{k+1}] + \alpha_{k}^{2}\sigma^{2} \\
\leq{}&
\EE [ \calL_{\rho_k, \mu_k}(\bw^{k})
+ (1 - \alpha_{k}+ 2 \mu_{k}) \| \be^{k}\|^{2}] - \mu_{k}\EE [\| \be^{k}\|^{2}]
+ \frac{\rho_{k+1}- \rho_{k}}{2}C^{2} \\
&+ \frac{1}{2}(- \frac{1}{2\mu_{k}}+ L_{\rho_k}+ \frac{2}{\alpha_{k}}L_f^{2})
\EE [ \| \bw^{k+1}- \bw^{k}\|^{2}] + \EE[\Delta_{k+1}] + \alpha_{k}^{2}\sigma^{2} \\
\leq{}&
\EE [ \calL_{\rho_k, \mu_k}(\bw^{k}) + \| \be^{k}\|^{2}]
- \frac{\EE [ \| \bw^{k+1}- \bw^{k}\|^{2}]}{16\mu_k}- \mu_{k}\EE [\| \be^{k}\|^{2}]
+\frac{\rho_{k+1}- \rho_{k}}{2}C^{2}
+ \EE[\Delta_{k+1}]
+ \alpha_{k}^{2}\sigma^{2},
\end{aligned}
    \end{equation}
where the last inequality follows from $1 - \alpha_{k}+ 2\mu_{k}\leq 1$ and
    that $-\tfrac{1}{2}(2\mu_k)^{-1}+ \tfrac{1}{2}L_{\rho_k}+ \alpha_k^{-1}L_f^2\leq -\tfrac{1}{16\mu_k}$ under \eqref{Eq: the additional parameters assumptions in Polyak momentum}.

    Taking expectation in \eqref{eq:sequence-residual-bound} and using
    \(\mu_k L_{\rho_k}\leq \tfrac{1}{4}\), we obtain
    \begin{align}
        &\frac{1}{8}\frac{1}{2\mu_{k}}\EE[\| \bw^{k+1}- \bw^{k}\|^{2}] \notag \\
        \leq&\, \frac{1}{2}( \EE[\calL_{\rho_k, \mu_k}(\bw^{k})] - \EE[\calL_{\rho_{k+1}, \mu_{k+1}}(\bw^{k+1})] + \frac{\rho_{k+1}- \rho_{k}}{2}C^{2}+ \EE[\Delta_{k+1}]+ \mu_{k}\EE[\| \be^{k}\|^{2}] ). \notag
    \end{align}
Substituting this into \eqref{Eq: the descent property of potential function with recursive gradient}
    and rearranging the inequality yields
    \begin{align}
        \mu_{k}\EE [\| \be^{k}\|^{2}] \le &\, 3 \bigl( \EE[\calL_{\rho_k, \mu_k}(\bw^{k})] - \EE[\calL_{\rho_{k+1}, \mu_{k+1}}(\bw^{k+1})] + \frac{\rho_{k+1}- \rho_{k}}{2}C^{2} + \EE[\Delta_{k+1}] \bigr) \notag \\
        &+ 2( \EE [\| \be^{k}\|^{2}] - \EE [\| \be^{k+1}\|^{2}] ) + 2\alpha_{k}^{2}\sigma^{2}.
    \end{align}
Summing over $k = 0, \ldots, K-1$ and dividing by $K$ yields \eqref{Eq: weighted cumulative error bound for Polyak}.
    When $\mu_{k}\equiv \mu , \, \alpha_{k}\equiv \alpha ,\, \rho_{k}\equiv \rho$, one has that \(\rho_{k+1} - \rho_k = 0\) and \(\Delta_{k+1} = 0\), implying \eqref{Eq: weighted cumulative error bound under constant parameters for Polyak} and completing the proof.
\end{proof}

We now characterize the measurements in \eqref{Eq: approximate stationarity point} for MoSSP-P with constant parameters $\rho_{k}\equiv \rho$, $\mu_{k}\equiv \mu$, $\alpha_{k}\equiv \alpha$.
\begin{lemma}
    \label{Lemma: the error bound of the infeasible stationarity in expectation for Polyak}
    Suppose that the conditions of Lemma~\ref{Lemma: Weighted
    error accumulation of Polyak} hold. Then, it holds that
\begin{align}
\label{Eq: the criticality measure error bound in expectation for Polyak}
& \max\{ \EE[\| \bu^{R+1}\|^{2}] ,
\EE [\| \bx^{R+1}- \prox_{\mu g}(\bz^{R})\|^{2}]\} \notag\\
\leq{}&
\frac{1}{K}
\frac{96(\tfrac{1}{16} + \beta^{-2} + \tfrac{1}{3}(\mu/\beta)^{2}) + 9}{\mu}
( \calL_{\rho, \mu}(\bw^{0}) - \calL^{*}) + \frac{(48(\tfrac{1}{16} + \beta^{-2} + \tfrac{1}{3}(\mu/\beta)^{2}) + 6)
\EE [\| \be^{0}\|^{2}]}{\mu K } \notag\\
&+ \frac{(48(\tfrac{1}{16} + \beta^{-2} + \tfrac{1}{3}(\mu/\beta)^{2}) + 6)
\alpha^{2}\sigma^{2}}{\mu},
\end{align}
    and
\begin{align}
\label{Eq: the infeasible stationarity error bound in expectation for Polyak}
&\EE [\|\nabla \bc(\bx^{R+1}) \bc(\bx^{R+1}) \|^{2}]  \notag \\
\leq{}&
\frac{192(\tfrac{1}{16}+ \beta^{-2}) + 18}{\mu\rho^{2}K}
(\calL_{\rho, \mu}(\bw^{0}) - \calL^{*}) + \frac{(12 + 96(\tfrac{1}{16} + \beta^{-2}))\EE [\| \be^{0}\|^{2}]}
{\mu\rho^{2}K } \notag\\
&+ \frac{(12 + 96(\tfrac{1}{16} + \beta^{-2})) \alpha^{2}\sigma^{2}}{\mu\rho^{2}}
+\frac{18G^{2}}{\rho^{2}}.
\end{align}
\end{lemma}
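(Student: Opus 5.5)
This is a substitution argument: specialize \Cref{Lemma: the error bound of the infeasible stationarity in expectation} to constant parameters $\rho_k\equiv\rho$, $\mu_k\equiv\mu$, $\alpha_k\equiv\alpha$, and then remove the accumulated error $\frac1K\sumK\EE[\|\be^k\|^2]$ using \eqref{Eq: weighted cumulative error bound under constant parameters for Polyak} from \Cref{Lemma: Weighted error accumulation of Polyak}.

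\textbf{Step 1 (constant parameters).} With constant parameters we have $\rho_{k+1}-\rho_k=0$ and $\Delta_{k+1}=0$, so the middle sums in \eqref{Eq: the criticality measure error bound in expectation} and \eqref{Eq: the infeasible stationarity error bound in expectation} vanish. We also have $\mu_K=\mu$, $\underline{\rho}_K=\rho$, and $\calL_{\rho_0,\mu_0}(\bw^0)=\calL_{\rho,\mu}(\bw^0)$. Let $A:=24(\tfrac1{16}+\beta^{-2}+\tfrac13(\mu/\beta)^2)$. The criticality bound then reads
\[
\frac{A}{\mu K}\bigl(\calL_{\rho,\mu}(\bw^0)-\calL^*\bigr)+\frac{A+3}{K}\sumK\EE[\|\be^k\|^2].
\]

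\textbf{Step 2 (criticality bound).} Substitute \eqref{Eq: weighted cumulative error bound under constant parameters for Polyak}. The coefficient of $(\calL_{\rho,\mu}(\bw^0)-\calL^*)/(\mu K)$ becomes $A+3(A+3)=4A+9=96(\cdots)+9$, which matches the first term. The $\EE[\|\be^0\|^2]/(\mu K)$ and $\alpha^2\sigma^2/\mu$ terms each pick up the factor $2(A+3)=48(\cdots)+6$, which matches the second and third terms. This gives \eqref{Eq: the criticality measure error bound in expectation for Polyak}.

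\textbf{Step 3 (infeasible stationarity bound).} Set $B:=48(\tfrac1{16}+\beta^{-2})$ and start from \eqref{Eq: the infeasible stationarity error bound in expectation}. The error sum carries coefficient $(6+B)/\rho^2$. Substituting the same bound makes the coefficient of $(\calL_{\rho,\mu}(\bw^0)-\calL^*)/(\mu\rho^2K)$ equal to $B+3(6+B)=4B+18$. Since $4B=192(\tfrac1{16}+\beta^{-2})$, this is the stated constant. The $\EE[\|\be^0\|^2]$ and $\alpha^2\sigma^2$ terms get the factor $2(6+B)=12+96(\tfrac1{16}+\beta^{-2})$. The term $18G^2/\rho^2$ passes through unchanged. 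This gives \eqref{Eq: the infeasible stationarity error bound in expectation for Polyak}.

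\textbf{Checks.} There is no real obstacle; the only care needed is confirming hypotheses and tracking constants. For hypotheses, \Cref{Lemma: Weighted error accumulation of Polyak} requires \eqref{Eq: basic parameter conditions for both algorithms} and \eqref{Eq: the additional parameters assumptions in Polyak momentum}, and these are exactly what the present statement assumes. For constants, I would confirm that the previous lemma's $\mu_0$ coefficient becomes $\mu$, and that the nonnegative term $\calL_{\rho,\mu}(\bw^0)-\calL^*\ge0$, guaranteed by \eqref{Eq: uniform lower bound L star}, allows upper bounds to be added freely.
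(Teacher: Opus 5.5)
Your proposal is correct and follows the paper's proof: with constant parameters the $\rho_{k+1}-\rho_k$ and $\Delta_{k+1}$ terms vanish, and substituting the constant-parameter cumulative error bound produces exactly the constants $4A+9$, $2(A+3)$, $4B+18$ and $2(6+B)$ that you computed. The only cosmetic difference is in the infeasible-stationarity bound. The paper substitutes into \eqref{Eq: the boundedness of subgradient-averaging} first and then applies \eqref{Eq: the bound of infeasible stationarity-averaging}, whereas you substitute into their already-combined form \eqref{Eq: the infeasible stationarity error bound in expectation}; both orders give identical constants.
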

\begin{proof}
    Substituting \eqref{Eq: weighted cumulative error bound under constant parameters for Polyak}
    into \eqref{Eq: the criticality measure error bound in expectation} and noticing that all parameters are constant,
    we derive the desired bound in \eqref{Eq: the criticality measure error bound in expectation for Polyak}.

    Then, substituting \eqref{Eq: weighted cumulative error bound under constant parameters for Polyak}
    into \eqref{Eq: the boundedness of subgradient-averaging} yields
    \begin{align}
             & \frac{1}{K}\sumK \EE[\| \bu^{k+1}\|^{2}] \notag   \\
        \leq&\, \frac{9 + 96(\tfrac{1}{16} + \beta^{-2})}{\mu K}( \calL_{\rho, \mu}(\bw^{0}) - \calL^{*}) + \frac{6 + 48(\tfrac{1}{16} + \beta^{-2})}{\mu K}\EE[\| \be^{0}\|^{2}] +\frac{6 + 48(\tfrac{1}{16} + \beta^{-2})}{\mu}\alpha^{2}\sigma^{2}. \notag
    \end{align}
Then, combining this bound with \eqref{Eq: the bound of infeasible stationarity-averaging}
    from Lemma \ref{Lemma: the error bound of infeasible stationarity and kkt conditions}
    directly gives \eqref{Eq: the infeasible stationarity error bound in expectation for Polyak},
    completing the proof.
\end{proof}

\subsubsection{Proof of \Cref{Lemma: the estimation of the convergence rate of Polyak for infeasible stationary point}}
\label{subsubsec: proof Lemma Polyak}

Selecting appropriate parameters is crucial as the final oracle complexity depends on these choices. To satisfy \eqref{Eq: basic parameter conditions for both algorithms} and \eqref{Eq: the additional parameters assumptions in Polyak momentum}, we adopt the following parameter setting:
\begin{align}
\label{Eq: the parameter conditions in Polyak momentum for analysis}
\rho_{k}\equiv \rho = \rho_0 K^{l}, \,  \mu_{k}\equiv \mu = \frac{\mu_0}{K^{\tau}\max\{L_f, \tilde{L}\}}, \,  \alpha_{k}\equiv \alpha = \frac{\alpha_0 \mu_0}{K^{\tau}}, \, 0 < \beta \leq 1,
\end{align}
where $0<l\leq \tau \leq 2l<1$, $0 < \mu_0 \leq \min\{\frac{1}{4\rho_0}, \frac{1}{4L_f}\}$, $\alpha_0 = \frac{2\gamma}{\max\{L_f, \tilde{L}\}}$, and $\gamma \geq \max\{1, 8L_f^{2}\}$ are constants independent of $K$. Then, substituting \eqref{Eq: the parameter conditions in Polyak momentum for analysis} into \Cref{Lemma: the error bound of the infeasible stationarity in expectation for Polyak} directly yields \eqref{Eq: convergence rate of polyak}. To reach the rate of $\calO(K^{-1/2})$ under approximate feasible initialization, we set $\tau = 2l = \tfrac{1}{2}$. We are now ready to prove \Cref{Theorem: the convergence rate of polyak with initial feasibility}.

\subsubsection{Proof of \Cref{Theorem: the convergence rate of polyak with initial feasibility}}
\label{proof: polyak with feasibility}
\begin{proof}
First, under \Cref{Ass: constraints conditions}, combining \eqref{Eq: the constraint violation-averaging} with \eqref{Eq: the infeasible stationarity error bound in expectation for Polyak} yields that the upper bound on $\EE[\|\bc(\bx^{R+1})\|^{2}]$ is in the same order as $\EE[\|\nabla\bc(\bx^{R+1})\bc(\bx^{R+1})\|^{2}]$.

Second, by the approximate feasibility condition $\|\bc(\bx^{0})\|^{2} = \calO(K^{-l})$, we have $\calL_{\rho,\mu}(\bw^{0}) + 1 = \calO(1)$ and the two upper bounds in \eqref{Eq: convergence rate of polyak} are simplified to
\[
\calO\!\left(\max\{K^{\tau-1},\,K^{-\tau}\}\right) \quad\text{and}\quad \calO\!\left(\max\{K^{-2l+\tau-1},\,K^{-2l}\}\right).
\]
If we choose $l = \tfrac{1}{4}$ and $\tau = \tfrac{1}{2}$, both reduce to $\calO(K^{-1/2})$. Hence, to obtain a stochastic $\varepsilon$-stationary point, $K$ is of order $\calO(\varepsilon^{-4})$, and the associated oracle complexity is $\calO(\varepsilon^{-4})$. Under \Cref{Ass: constraints conditions}, the complexity to reach a stochastic $\varepsilon$-KKT point is $\calO(\varepsilon^{-4})$.
\end{proof}

\subsubsection{Proof of \Cref{Theorem: the basic convergence rate of Polyak without initial feasibility}}
\label{app:polyak_without_initial_feasibility}
Without assuming initial approximate feasibility, we have $\calL_{\rho,\mu}(\bw^0)+1=\calO(K^l)$ under $\|\bc(\bx^0)\|^2=\calO(1)$ and $\rho=\calO(K^l)$. Moreover, the one-sample Polyak initialization and \Cref{Ass: unbiased gradient oracle} give $\EE[\|\be^0\|^2]=\calO(1)$. Combining these bounds with \eqref{Eq: convergence rate of polyak}, we obtain
\[
\max\bigl\{\EE[\|\bu^{R+1}\|^2],
\EE[\|\bx^{R+1}-\prox_{\mu g}(\bz^R)\|^2]\bigr\}
=
\calO\!\left(\max\{K^{l+\tau-1},\,K^{\tau-1},\,K^{-\tau}\}\right),
\]
and
\[
\EE[\|\nabla\bc(\bx^{R+1})\bc(\bx^{R+1})\|^2]
=
\calO\!\left(\max\{K^{-l+\tau-1},\,K^{-2l+\tau-1},\,K^{-2l}\}\right).
\]
Choosing $l=\tfrac{1}{5}$ and $\tau=\tfrac{2}{5}$ gives $\rho=\calO(K^{1/5})$, $\mu=\calO(K^{-2/5})$, and $\alpha=\calO(K^{-2/5})$. The two bounds reduce to
\[
\max\bigl\{\EE[\|\bu^{R+1}\|^2],
\EE[\|\bx^{R+1}-\prox_{\mu g}(\bz^R)\|^2]\bigr\}
=\calO(K^{-2/5}),
\]
and
\[
\EE[\|\nabla\bc(\bx^{R+1})\bc(\bx^{R+1})\|^2]
=\calO(K^{-2/5}).
\]
Hence, to make the squared residuals no larger than $\varepsilon^2$, it suffices to take $K=\calO(\varepsilon^{-5})$. Since MoSSP-P uses one stochastic first-order oracle call per iteration, the total oracle complexity for finding a stochastic $\varepsilon$-stationary point is $\calO(\varepsilon^{-5})$. Under \Cref{Ass: constraints conditions}, the same bound on $\EE[\|\nabla\bc(\bx^{R+1})\bc(\bx^{R+1})\|^2]$ yields $\EE[\|\bc(\bx^{R+1})\|^2]=\calO(K^{-2/5})$, and the same oracle complexity holds for finding a stochastic $\varepsilon$-KKT point. The corresponding certificate is $\bar{\by}=\prox_{\mu g}(\bz^R)$, $\bar{\blambda}=\rho\bc(\bx^{R+1})$, and $\bar{\bu}=\bu^{R+1}$; the inclusion in \eqref{Def: the definition of u} follows almost surely from \eqref{Eq: the definition of u for analysis}.

\subsection{\bf{Oracle Complexity Analysis of MoSSP-R}}
\label{Subsection: Lemmas for Oracle complexity analysis of Algo 2}
We now establish a recursive relationship between $\EE[\| \be^{k+1}\|^{2}]$ and $\EE[\| \be^{k}\|^{2}]$ in the following lemma. The proof follows from \citet{xu2023momentum}.
For completeness, we provide the proof.
\begin{lemma}
    \label{Lemma: Recursive bound on gradient error for storm}
    Suppose that Assumptions \ref{Ass: boundedness of c and f}, \ref{Ass: unbiased gradient oracle} and \ref{Ass: the expected smoothness of f} hold.
    Then, for any $k \geq 0$, it holds that
\begin{align}
        \label{Eq: the stochastic error of the variance estimator}
        \EE\left[\| \be^{k+1}\|^{2}\right]
        \leq{} & (1 - \alpha_{k}) \EE\left[\| \be^{k}\|^{2}\right] + 2L^2_{f}\EE\left[ \| \bw^{k+1}- \bw^{k}\|^{2}\right]
        + 2\alpha_{k}^{2}\sigma^{2}.
    \end{align}
\end{lemma}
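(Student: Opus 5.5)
The plan is the standard STORM error-recursion argument, run conditionally on the history $\xi^{[k]}$. First I reduce $\be^{k+1}$ to the error of $\bd^{k+1}$ alone. The penalty part $\rho_{k+1}\nabla\bc(\bx^{k+1})\bc(\bx^{k+1})$ is computed exactly, so it cancels in $\bD^{k+1}-\nabla Q_{\rho_{k+1}}(\bx^{k+1})$, and
\[
\be^{k+1}=\bd^{k+1}-\nabla f(\bx^{k+1}).
\]
Likewise $\be^k=\bd^k-\nabla f(\bx^k)$. Then I use the recursion \eqref{Eq: the storm estimator} to write
\begin{align*}
\be^{k+1} ={}& (1-\alpha_k)\be^k + \bigl[\nabla\tf(\bx^{k+1},\xi^{k+1})-\nabla\tf(\bx^k,\xi^{k+1}) - (\nabla f(\bx^{k+1})-\nabla f(\bx^k))\bigr] \\
&+ \alpha_k\bigl[\nabla\tf(\bx^k,\xi^{k+1})-\nabla f(\bx^k)\bigr].
\end{align*}
This is the usual regrouping: $(1-\alpha_k)\bigl(\bd^k-\nabla f(\bx^k)\bigr)$, plus a centered difference term, plus $\alpha_k$ times a centered noise term at $\bx^k$.

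Next I take $\EE[\cdot\mid\xi^{[k]}]$. Both $\bx^{k+1}$ and $\bx^k$ are $\xi^{[k]}$-measurable, since $\bx^{k+1}$ uses $\bD^k$, which depends only on $\xi^{[k]}$. The sample $\xi^{k+1}$ is independent of $\xi^{[k]}$, so by \Cref{Ass: unbiased gradient oracle} both bracketed terms have zero conditional mean. Hence the cross term with $(1-\alpha_k)\be^k$ vanishes, and
\[
\EE[\|\be^{k+1}\|^2\mid\xi^{[k]}]\le(1-\alpha_k)^2\|\be^k\|^2+\EE\bigl[\|A+\alpha_k B\|^2\mid\xi^{[k]}\bigr],
\]
where $A$ and $B$ denote the two bracketed terms. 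I bound the last expectation by $2\EE\|A\|^2+2\alpha_k^2\EE\|B\|^2$. Since $A$ is centered, its second moment is at most $\EE\|\nabla\tf(\bx^{k+1},\xi)-\nabla\tf(\bx^k,\xi)\|^2$, and by \Cref{Ass: the expected smoothness of f} this is at most $L_f^2\|\bx^{k+1}-\bx^k\|^2\le L_f^2\|\bw^{k+1}-\bw^k\|^2$. By \Cref{Ass: unbiased gradient oracle}, $\EE\|B\|^2\le\sigma^2$.

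To finish, I use $(1-\alpha_k)^2\le 1-\alpha_k$ for $\alpha_k\in(0,1)$ and take full expectations, which gives \eqref{Eq: the stochastic error of the variance estimator}. The only step needing care is the measurability bookkeeping: I must check that the sample $\xi^{k+1}$ used in $\bd^{k+1}$ is fresh relative to $\bx^{k+1}$. This holds because $\bx^{k+1}$ is formed from $\bD^k$ before $\xi^{k+1}$ is drawn, and the mini-batch at $k=0$ plays no role for $k\ge0$. Nothing else is delicate.
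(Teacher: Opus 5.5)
Your proof is correct and is essentially the paper's argument. You reduce $\be^{k+1}$ to $\bd^{k+1}-\nabla f(\bx^{k+1})$, remove the cross term with $(1-\alpha_k)\be^k$ by conditional unbiasedness, split the remainder via $\|a+b\|^2\le 2\|a\|^2+2\|b\|^2$, bound the centered gradient difference by mean-squared smoothness, and finish with $(1-\alpha_k)^2\le 1-\alpha_k$. The only cosmetic difference is that you put $\alpha_k$ on the noise at $\bx^k$, while the paper puts $\alpha_k$ on the noise at $\bx^{k+1}$ and $(1-\alpha_k)$ on the centered difference, then uses $(1-\alpha_k)^2\le 1$; both yield the same bound.
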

\begin{proof}
Let $\xi^{[k]} := \{\xi^0,\ldots,\xi^k\}$ denote the collection of i.i.d. samples drawn up to iteration $k$ in MoSSP-R, and let $\EE[\cdot \mid \xi^{[k]}]$ denote the corresponding conditional expectation. Recalling \eqref{Eq: the stochastic error} and \eqref{Eq: gradient estimator storm}, it holds that
\begin{align}
    &\EE[\|\be^{k+1}\|^2|\xi^{[k]}] \notag \\
    =&\, \EE[\|\bD^{k+1}- \nabla Q_{\rho_{k+1}}(\bx^{k+1})\|^2|\xi^{[k]}] \notag\\
    =&\, \EE[\|\bd^{k+1} - \nabla f(\bx^{k+1})\|^2|\xi^{[k]}] \notag \\
    =&\, \EE[\| \nabla \tf(\bx^{k+1}, \xi^{k+1}) - \nabla f(\bx^{k+1}) + (1 - \alpha_{k})(\bd^{k}- \nabla \tf(\bx^{k}, \xi^{k+1})) \|^2 |\xi^{[k]}] \notag \\
    =&\, \EE[\| (1- \alpha_{k})\be^{k}+ \nabla \tf(\bx^{k+1}, \xi^{k+1}) - \nabla f(\bx^{k+1}) + (1 - \alpha_{k})(\nabla f(\bx^{k}) - \nabla \tf(\bx^{k}, \xi^{k+1})) \|^2 |\xi^{[k]}] \notag \\
     =&\,(1 - \alpha_{k})^{2}\EE[\|\be^{k}\|^{2}|\xi^{[k]}] + \EE\bigl[\| \nabla \tf(\bx^{k+1}, \xi^{k+1}) - \nabla f(\bx^{k+1}) + (1 - \alpha_{k})(\nabla f(\bx^{k}) - \nabla \tf(\bx^{k}, \xi^{k+1}))\|^{2} |\xi^{[k]}\bigr],
    \label{Eq: the recursive error of stochastic estimator}
\end{align}
where \eqref{Eq: the recursive error of stochastic estimator} follows from \Cref{Ass: unbiased gradient oracle}, which ensures that
\begin{align}
    \EE[\langle \nabla \tf(\bx^{k+1}, \xi^{k+1}) - \nabla f(\bx^{k+1}), \be^{k}\rangle \mid \xi^{[k]}] &= 0, \notag \\
    \EE[\langle \nabla \tf(\bx^{k}, \xi^{k+1}) - \nabla f(\bx^{k}), \be^{k}\rangle \mid \xi^{[k]}] &= 0. \notag
\end{align}
    Let us focus on the second term \eqref{Eq: the recursive error of stochastic estimator} by rewriting this term as
   \begin{align}
     &\EE[\| \nabla \tf(\bx^{k+1}, \xi^{k+1}) - \nabla f(\bx^{k+1}) + (1 - \alpha_{k})(\nabla f(\bx^{k}) - \nabla \tf(\bx^{k}, \xi^{k+1}))\|^{2}|\xi^{[k]}] \notag \\
=&\, \EE[\| \alpha_{k}(\nabla \tf(\bx^{k+1}, \xi^{k+1}) - \nabla f(\bx^{k+1})) + (1 - \alpha_{k})(\nabla f(\bx^{k}) - \nabla f(\bx^{k+1}) \notag \\
     &\qquad\qquad + \nabla \tf(\bx^{k+1}, \xi^{k+1}) - \nabla \tf(\bx^{k}, \xi^{k+1}))\|^{2}|\xi^{[k]}] \notag \\
\leq&\, 2\alpha_{k}^{2}\EE[\| \nabla \tf(\bx^{k+1}, \xi^{k+1}) - \nabla f(\bx^{k+1})\|^{2}|\xi^{[k]}]\notag \\
     &+ 2(1 - \alpha_{k})^{2}\EE[\| \nabla f(\bx^{k}) - \nabla f(\bx^{k+1})+ \nabla \tf(\bx^{k+1}, \xi^{k+1}) - \nabla \tf(\bx^{k}, \xi^{k+1})\|^{2}|\xi^{[k]}] \notag \\
\leq&\, 2\alpha_{k}^{2}\sigma^{2} + 2(1 - \alpha_{k})^{2}\EE[\| \nabla \tf(\bx^{k}, \xi^{k+1}) - \nabla \tf(\bx^{k+1}, \xi^{k+1})\|^{2}|\xi^{[k]}] \label{Eq: eq1 in lm recursive} \\
\leq&\, 2\alpha_{k}^{2}\sigma^{2}+ 2L^2_{f} \EE [\| \bw^{k+1}- \bw^{k}\|^{2}| \xi^{[k]}], \label{Eq: the second term of the recursive stochastic error}
\end{align}
where \eqref{Eq: eq1 in lm recursive} uses
    $\EE\left[\nabla \tf(\bx^{k+1},\xi^{k+1}) - \nabla \tf(\bx^{k},\xi^{k+1}
    ) | \xi^{[k]} \right] = \nabla f(\bx^{k+1}) - \nabla f(\bx^{k})$
    and \eqref{Eq: the second term of the recursive stochastic error} uses \Cref{Ass:
    the expected smoothness of f} and $0 < \alpha_{k}\leq 1$. Then, substituting
    \eqref{Eq: the second term of the recursive stochastic error} into \eqref{Eq: the recursive error of stochastic estimator}
    and taking full expectation on both sides of this inequality, we obtain
    \eqref{Eq: the stochastic error of the variance estimator} and complete the proof.
\end{proof}
For MoSSP-R, we similarly use the recursive structure of $\EE [ \|\be^{k}\|^{2}]$ from \Cref{Lemma: Recursive bound on gradient error for storm} to establish a weighted cumulative error bound. We assume the following additional parameter conditions on $\alpha_k$ and \(\mu_k\):
\begin{align}
    \label{Eq: the additional parameters assumptions in storm}
    0 < 32 \mu_{k}^{2}L_f^{2}\leq \alpha_{k}\leq 1, \quad \forall\, k \geq 0.
\end{align}
\begin{lemma}
    \label{Lemma: Weighted error accumulation of storm} Suppose the assumptions of
    Lemma \ref{Lemma: Recursive bound on gradient error for storm} hold and the parameters satisfy
    both \eqref{Eq: basic parameter conditions for both algorithms} and \eqref{Eq: the additional parameters assumptions in storm}.
    Then, for any $K \geq 1$, it holds that
    \begin{align}
        \label{Eq: weighted cumulative error bound for storm}
        &\frac{1}{K}\sumK \alpha_{k}\EE[\| \be^{k}\|^{2}] \notag \\
        \leq&\, \frac{32\mu_{0}L_f^{2}( \calL_{\rho_0, \mu_0}(\bw^{0}) - \calL^{*})}{K}+ \frac{2\EE [\| \be^{0}\|^{2}]}{K} + \frac{32\mu_0L_f^2}{K}\sumK \left( \frac{\rho_{k+1}- \rho_{k}}{2}C^2 + \EE[\Delta_{k+1}]\right)+ \frac{1}{K}\sum\limits_{k=0}^{K-1}4 \alpha_{k}^{2}\sigma^{2}.
    \end{align}
Furthermore, if we set $\mu_{k}\equiv \mu$, $\alpha_{k}\equiv \alpha$,
and $\rho_{k}\equiv \rho$ for $k \geq 0$, it holds that
    \begin{align}
        \label{Eq: weighted cumulative error bound under constant parameters for storm}\frac{1}{K}\sumK \EE[\| \be^{k}\|^{2}]\leq \frac{32\mu L_f^{2}( \calL_{\rho, \mu}(\bw^{0}) - \calL^{*})}{\alpha K}+ \frac{2\EE [\| \be^{0}\|^{2}]}{\alpha K}+ 4 \alpha \sigma^{2}.
    \end{align}
\end{lemma}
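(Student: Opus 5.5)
The plan mirrors the proof of \Cref{Lemma: Weighted error accumulation of Polyak}: merge the STORM error recursion with the sufficient-descent inequality into a single Lyapunov function, then telescope. The weighting changes because the variation term in \eqref{Eq: the stochastic error of the variance estimator} carries the constant coefficient $2L_f^2$ rather than $\alpha_k^{-1}L_f^2$. Concretely, consider
\[
\Phi_k := c\,\EE[\calL_{\rho_k,\mu_k}(\bw^k)] + \EE[\|\be^k\|^2],
\]
with a weight $c>0$ to be fixed; the target constants suggest $c = 32\mu_0 L_f^2$ or $c=16\mu_0L_f^2$ times a small factor.

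\textbf{Step 1 (one-step Lyapunov inequality).} Take full expectations in \eqref{eq:approximate-sufficient-descent}, multiply by $c$, and add \eqref{Eq: the stochastic error of the variance estimator}. Using $\frac{(2\mu_k)^{-1}-L_{\rho_k}}{2}\ge \frac{1}{8\mu_k}$, this gives
\[
\Phi_{k+1} \le \Phi_k - \Bigl(\tfrac{c}{8\mu_k} - 2L_f^2\Bigr)\EE\|\bw^{k+1}-\bw^k\|^2 - (\alpha_k - c\mu_k)\EE\|\be^k\|^2 + c\Bigl(\tfrac{\rho_{k+1}-\rho_k}{2}C^2+\EE[\Delta_{k+1}]\Bigr) + 2\alpha_k^2\sigma^2 .
\]

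\textbf{Step 2 (choice of $c$).} We need $c \ge 16\mu_k L_f^2$ to kill the variation term, and $c\mu_k \le \alpha_k/2$ so that at least $\frac{\alpha_k}{2}\EE\|\be^k\|^2$ is retained. A constant $c$ breaks down when $\mu_k$ varies, since $\mu_k$ is nonincreasing but $\alpha_k$ is only bounded below by $32\mu_k^2L_f^2$. The natural fix is the iteration-dependent weight $c_k = 16\mu_k L_f^2$, for which $c_k\mu_k = 16\mu_k^2L_f^2 \le \alpha_k/2$ by \eqref{Eq: the additional parameters assumptions in storm}. Because $\mu_{k+1}\le\mu_k$, the weights $c_k$ are nonincreasing, so the potential terms telescope as an Abel sum. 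Each increment $(c_k - c_{k+1})\EE[\calL_{\rho_{k+1},\mu_{k+1}}(\bw^{k+1})]$ is bounded by using the uniform lower bound $\calL^*$ from \eqref{Eq: uniform lower bound L star} after subtracting $\calL^*$. Equivalently, one works with $c_k(\calL_{\rho_k,\mu_k}(\bw^k)-\calL^*)\ge 0$ and uses monotonicity of $c_k$ to drop the nonnegative leftover. The total is then at most $c_0(\calL_{\rho_0,\mu_0}(\bw^0)-\calL^*) \le 32\mu_0L_f^2(\cdots)$ after doubling when we rescale by $2$. The parameter-variation terms are bounded by $c_k\le 16\mu_0L_f^2$.

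\textbf{Step 3 (summation).} Rearrange to
\[
\frac{\alpha_k}{2}\EE\|\be^k\|^2 \le \Phi_k-\Phi_{k+1} + c_k\Bigl(\tfrac{\rho_{k+1}-\rho_k}{2}C^2 + \EE[\Delta_{k+1}]\Bigr) + 2\alpha_k^2\sigma^2 .
\]
Multiply by $2$, sum over $k=0,\dots,K-1$, drop $\EE\|\be^K\|^2\ge0$, and divide by $K$; this yields \eqref{Eq: weighted cumulative error bound for storm} with exactly the stated constants $32\mu_0L_f^2$, $2$, and $4\alpha_k^2\sigma^2$. In the constant-parameter case, $\Delta_{k+1}=0$ and $\rho_{k+1}=\rho_k$, so dividing by $\alpha$ gives \eqref{Eq: weighted cumulative error bound under constant parameters for storm}.

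\textbf{Main obstacle.} The delicate point is Step 2, because the potential $\calL$ is not nonnegative and the weights $c_k$ vary with $k$. The Abel-summation argument with $\calL-\calL^*\ge0$ and nonincreasing $c_k$ is the first thing to try. If it fails, the fallback is to use the constant $c=16\mu_0L_f^2$ and check whether $c\mu_k\le\alpha_k/2$ can still be arranged; this only needs $\alpha_k\ge 32\mu_0\mu_kL_f^2$, which holds in the constant-parameter regime used downstream.
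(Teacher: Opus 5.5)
Your proposal is correct and is essentially the paper's argument. Weighting the descent inequality by $c_k=16\mu_kL_f^2$ is the same as the paper's step of multiplying \eqref{eq:sequence-residual-bound} (with $1-2\mu_kL_{\rho_k}\ge\tfrac12$) by $2L_f^2$; both then absorb $16\mu_k^2L_f^2\,\EE[\|\be^k\|^2]$ into $\alpha_k/2$, telescope, and double. The only difference is your Abel-summation step, which is in fact a more careful justification than the paper's, provided $\Phi_k$ is read as $c_k(\EE[\calL_{\rho_k,\mu_k}(\bw^k)]-\calL^*)+\EE[\|\be^k\|^2]$ with nonincreasing $c_k$ and \eqref{Eq: uniform lower bound L star}. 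The paper instead directly replaces $\mu_k$ by $\mu_0$ in front of $\calL_{\rho_k,\mu_k}(\bw^k)-\calL_{\rho_{k+1},\mu_{k+1}}(\bw^{k+1})+\cdots$, and that bracket need not be nonnegative when $\mu_k$ varies.
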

\begin{proof}
    From \eqref{Eq: the stochastic error of the variance estimator}, we obtain
    \begin{align}
        \label{Eq: the decay relation of decay gradient}\alpha_{k}\EE[\| \be^{k}\|^{2}] \leq \EE[\| \be^{k}\|^{2}] - \EE [ \| \be^{k+1}\|^{2}] + 2 \alpha_{k}^{2}\sigma^{2}+ 2L_f^{2}\EE[ \| \bw^{k+1}- \bw^{k}\|^{2}].
    \end{align}
On the other hand, taking expectation on both sides of \eqref{eq:sequence-residual-bound}
    yields
    \begin{align}
        &\label{Eq: the upper bound of w}\EE [ \| \bw^{k+1}- \bw^{k}\|^{2}]\notag \\
        \leq & \, 8\mu_{k}\EE [ \calL_{\rho_k, \mu_k}(\bw^{k}) - \calL_{\rho_{k+1}, \mu_{k+1}}(\bw^{k+1}) + \frac{\rho_{k+1}- \rho_{k}}{2}C^{2}+ \Delta_{k+1}+ \mu_{k}\|\be^{k}\|^{2}],
    \end{align}
where the inequality follows from
    $1 - 2\mu_{k}L_{\rho}\geq \frac{1}{2}$, which is implied by
    $\mu_{k}L_{\rho}\leq \frac{1}{4}$.

    Then, substituting \eqref{Eq: the upper bound of w} into \eqref{Eq: the decay relation of decay gradient},
    using \(\mu_k\leq \mu_0\) for the non-error terms,
    summing over $k = 0,\ldots, K-1$, and dividing by $K$, we obtain
    \begin{align}
             & \frac{1}{K}\sumK ( \alpha_{k}- 16\mu_{k}^{2}L_f^{2})\EE[\|\be^{k}\|^{2}] \notag\\          
             \leq&\, \frac{1}{K}\sumK ( \EE [\|\be^{k}\|^{2}] - \EE[ \|\be^{k+1}\|^{2}] ) + \frac{1}{K}\sumK 2\alpha^{2}_{k}\sigma^{2} \notag \\
             &+ \frac{16\mu_{0}L_f^{2}}{K}\sumK \bigl( \calL_{\rho_k, \mu_k}(\bw^{k}) - \calL_{\rho_{k+1}, \mu_{k+1}}(\bw^{k+1}) + \frac{\rho_{k+1}- \rho_{k}}{2}C^{2} + \EE[\Delta_{k+1}]\bigl).
    \end{align}
Since $\alpha_{k}\geq 32 \mu_{k}^{2}L_f^{2}$, we have $\alpha_{k}
    - 16\mu_{k}^{2}L_f^{2}\geq \frac{\alpha_{k}}{2}$, yielding
    \begin{align}
            &\frac{1}{K}\sumK \frac{\alpha_{k}}{2}\EE[\|\be^{k}\|^{2}] \notag \\
            \leq&\, \frac{\EE
        [\|\be^{0}\|^{2}]}{K}+ \frac{16\mu_{0}L_f^{2}( \calL_{\rho_0, \mu_0}
        (\bw^{0}) - \calL^{*}) }{K}+ \frac{16\mu_0L_f^2}{K}\sumK ( \frac{\rho_{k+1}-
        \rho_{k}}{2}C^{2} + \EE[\Delta_{k+1}] )+ \frac{1}{K}\sumK 2\alpha^{2}_{k}\sigma^{2}, \notag
    \end{align}
where rearranging the inequality yields \eqref{Eq: weighted cumulative error bound for storm} and completes the proof.
\end{proof}
We now characterize the measurements in \eqref{Eq: approximate stationarity point} for MoSSP-R with constant parameters $\rho_{k}\equiv \rho $, $\mu_{k}\equiv \mu$, $\alpha_{k}\equiv \alpha$ for any \(k \geq 0\).
\begin{lemma}
    \label{Lemma: the error bound of the infeasible stationarity in expectation for VR}
    Suppose that the conditions of Lemma~\ref{Lemma: Weighted
    error accumulation of storm} hold. Then, it holds that
   \begin{align}
\label{Eq: the criticality measure error bound in expectation for VR} & \max\{ \EE[\| \bu^{R+1}\|^{2}] , \EE [\| \bx^{R+1}- \prox_{\mu g}(\bz^{R})\|^{2}]\} \notag  \\
\leq & \, \frac{1}{K}\left( \frac{(96+768(\tfrac{1}{16}+\beta^{-2}+\tfrac{1}{3}(\mu/\beta)^2)) \mu L_f^{2}}{\alpha}+ \frac{24(\tfrac{1}{16} + \beta^{-2}+\tfrac{1}{3}(\mu/\beta)^2)}{\mu}\right) ( \calL_{\rho, \mu}(\bw^{0}) - \calL^{*}) \notag\\
& + \frac{( 48( \tfrac{1}{16} + \beta^{-2}+\tfrac{1}{3}(\mu/\beta)^2) + 6 )\EE [\| \be^{0}\|^{2}]}{\alpha K }+ ( 96 ( \tfrac{1}{16} + \beta^{-2}+\tfrac{1}{3}(\mu/\beta)^2) + 12)\alpha\sigma^{2},
    \end{align}
and
\begin{align}
\label{Eq: the infeasible stationarity error bound in expectation for VR}
&\EE [\|\nabla \bc(\bx^{R+1}) \bc(\bx^{R+1}) \|^{2}] \notag \\
\leq & \, \frac{2}{\rho^{2}K}( \frac{(144 +768 \beta^{-2}) \mu L^2_{f}}{\alpha}+ \frac{24(\tfrac{1}{16}+ \beta^{-2})}{\mu}) (\calL_{\rho, \mu}(\bw^{0}) - \calL^{*}) + \frac{( 12 + 96( \tfrac{1}{16} + \beta^{-2}))\EE [\| \be^{0}\|^{2}]}{\alpha\rho^{2}K }\notag \\
&+ \frac{( 24 + 192( \tfrac{1}{16} + \beta^{-2})) \alpha\sigma^{2}}{\rho^{2}}+\frac{18G^{2}}{\rho^{2}}.
\end{align}
\end{lemma}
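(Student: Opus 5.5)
The plan mirrors the proof of \Cref{Lemma: the error bound of the infeasible stationarity in expectation for Polyak}, with the Polyak error bound replaced by the recursive-momentum one. Under constant parameters $\rho_k\equiv\rho$, $\mu_k\equiv\mu$, $\alpha_k\equiv\alpha$, we have $\rho_{k+1}-\rho_k=0$ and $\Delta_{k+1}=0$. Hence every parameter-variation sum in \Cref{Lemma: the error bound of the infeasible stationarity in expectation} and \Cref{Lemma: key u measure averaged in expectation} vanishes, and $\mu_K=\mu$.

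\textbf{Criticality measure.} The only nontrivial input is the averaged error $\frac{1}{K}\sumK\EE[\|\be^k\|^2]$. I would bound it by \eqref{Eq: weighted cumulative error bound under constant parameters for storm}:
\[
\frac{32\mu L_f^2(\calL_{\rho,\mu}(\bw^0)-\calL^*)}{\alpha K}+\frac{2\EE[\|\be^0\|^2]}{\alpha K}+4\alpha\sigma^2 .
\]
Write $A:=24(\tfrac{1}{16}+\beta^{-2}+\tfrac13(\mu/\beta)^2)$. Substituting the bound into \eqref{Eq: the criticality measure error bound in expectation} gives the following:
\begin{itemize}
\item a direct term $\frac{A}{\mu K}(\calL_{\rho,\mu}(\bw^0)-\calL^*)$;
\item a term $(A+3)\cdot\frac{32\mu L_f^2}{\alpha K}(\calL_{\rho,\mu}(\bw^0)-\calL^*)$, whose coefficient $32(A+3)=768(\cdots)+96$ matches the stated one;
\item $\frac{2(A+3)}{\alpha K}\EE[\|\be^0\|^2]$, i.e.\ coefficient $48(\cdots)+6$;
\item $4(A+3)\alpha\sigma^2$, i.e.\ $96(\cdots)+12$.
\end{itemize}
Collecting these terms reproduces \eqref{Eq: the criticality measure error bound in expectation for VR}.

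\textbf{Infeasible stationarity.} First I would plug the same error bound into \eqref{Eq: the boundedness of subgradient-averaging}, whose coefficient is $B:=24(\tfrac1{16}+\beta^{-2})$ (no $\mu/\beta$ term). This gives
\[
\frac{1}{K}\sumK\EE[\|\bu^{k+1}\|^2]\le \Bigl(\frac{B}{\mu}+\frac{32(B+3)\mu L_f^2}{\alpha}\Bigr)\frac{\calL_{\rho,\mu}(\bw^0)-\calL^*}{K}+\frac{2(B+3)\EE[\|\be^0\|^2]}{\alpha K}+4(B+3)\alpha\sigma^2 .
\]
Since $R$ is uniform, $\EE[\|\nabla\bc(\bx^{R+1})\bc(\bx^{R+1})\|^2]$ equals the average over $k$. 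Applying \eqref{Eq: the bound of infeasible stationarity-averaging} with $\underline\rho_K=\rho$ multiplies this by $2/\rho^2$ and adds $18G^2/\rho^2$.

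\textbf{Main obstacle.} The only real difficulty is constant bookkeeping. Checking: $32(B+3)=768(\tfrac1{16}+\beta^{-2})+96=144+768\beta^{-2}$, which agrees with the stated coefficient. Likewise $4(B+3)\cdot 2=24+192(\tfrac1{16}+\beta^{-2})$ and $2(B+3)\cdot2=12+96(\cdots)$, so the stated constants match exactly.
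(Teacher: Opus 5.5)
Your proposal is correct and follows the paper's proof exactly. For the first inequality you substitute the constant-parameter recursive-momentum error bound into the general criticality bound. For the second you substitute it into the averaged $\|\bu^{k+1}\|^2$ bound and then apply the infeasible-stationarity averaging inequality with $\underline{\rho}_K=\rho$. Your constant bookkeeping (e.g., $32(B+3)=144+768\beta^{-2}$ and $8(B+3)=24+192(\tfrac{1}{16}+\beta^{-2})$) matches the stated coefficients.
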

\begin{proof}
    For \eqref{Eq: the criticality measure error bound in expectation for VR},
    we substitute \eqref{Eq: weighted cumulative error bound under constant parameters for storm}
    into \eqref{Eq: the criticality measure error bound in expectation} to
    obtain the desired results. 
    
    For \eqref{Eq: the infeasible stationarity error bound in expectation for VR},
    under the constant parameter setting, we substitute \eqref{Eq: weighted cumulative error bound under constant parameters for storm}
    into \eqref{Eq: the boundedness of subgradient-averaging}, obtaining
\begin{align}
&\frac{1}{K}\sumK \EE [\| \bu^{k+1}\|^{2}] \notag \\
\leq & \, \frac{1}{K}\left( \frac{(96+768(\tfrac{1}{16}+\beta^{-2})) \mu L_f^{2}}{\alpha}+ \frac{24(\tfrac{1}{16} + \beta^{-2})}{\mu}\right) ( \calL_{\rho, \mu}(\bw^{0}) - \calL^{*})  \notag\\
& + \frac{ 2( 3 + 24(\tfrac{1}{16} + \beta^{-2}) ) \EE [\| \be^{0}\|^{2}]}{\alpha K} + (12 + 96(\tfrac{1}{16} + \beta^{-2}))\alpha\sigma^{2}. \notag
\end{align}
Combining this inequality with \eqref{Eq: the bound of infeasible stationarity-averaging}
    directly yields \eqref{Eq: the infeasible stationarity error bound in expectation for VR}, completing the proof.
\end{proof}
To ensure the parameter conditions \eqref{Eq: basic parameter conditions for both algorithms} and \eqref{Eq: the additional parameters assumptions in storm}, we set the parameters as follows
\begin{equation}
    \label{Eq: the parameter conditions in VR for analysis}
\begin{aligned}
\rho_{k} &\equiv \rho= \rho_0 K^{l}, \\
\mu_{k} &\equiv \mu= \frac{\mu_0}{K^{l}\max\{L_f, \tilde{L}\}}, \\
\alpha_{k} &\equiv \alpha = \frac{16\alpha_0 \mu_0^{2}}{K^{\tau}},
\quad 0 < \beta \leq 1,
\end{aligned}
\end{equation}
where $\rho_0 > 0$, $0 < \mu_0 \leq \min\{\frac{1}{4\rho_0},\frac{\max\{L_f,\tilde{L}\}}{4\sqrt{2}L_f}\}$, $0 < \tau \leq 2 l < 1$ and
$\alpha_0 \in [\frac{2L_f^2}{\max\{L_f,\tilde{L}\}^2},\,\frac{1}{16\mu_0^{2}}]$ are given constants independent of $K$. Then,
\Cref{Lemma: the error bound of the infeasible stationarity in expectation for VR}
yields the following convergence rate of MoSSP-R for finding a \textit{stochastic \(\varepsilon\)-stationary point}:
\begin{align}
    \label{Eq: convergence rate of storm}
    \begin{cases}\max\{ \EE [\| \bu^{R+1}\|^{2}], \EE[\| \bx^{R+1}- \prox_{\mu  g}(\bz^{R})\|^{2}]\} \\
    \qquad \qquad =  \calO\!\left(\max\{ K^{l-1}\left( \calL_{\rho, \mu}(\bw^{0})+1\right),\, K^{\tau - 1}\EE [\| \be^{0}\|^{2}],\, K^{-\tau}\}\right), \\ \EE [\|\nabla \bc(\bx^{R+1})\bc(\bx^{R+1})\|^{2}] \\
    \qquad \qquad = \calO\!\left(\max\{ K^{-l-1}\left( \calL_{\rho, \mu }(\bw^{0}) + 1\right),\, K^{-2l + \tau -1 }\EE [\| \be^{0}\|^{2}],\, K^{-\tau}\}\right).
\end{cases}
\end{align}
As can be seen in \eqref{Eq: convergence rate of storm}, \(\EE[\| \be^0 \|^2]\) affects the order in \eqref{Eq: convergence rate of storm}. Therefore, with an initial batch size \(b_0 = \calO(K^{l})\), one has \( \EE[\| \be^0 \|^2] = \calO(K^{-l}) \). Similar to the proof of \Cref{proof: polyak with feasibility}, under \Cref{Ass: constraints conditions}, combining \eqref{Eq: the constraint violation-averaging} with \eqref{Eq: the infeasible stationarity error bound in expectation for VR} yields that the upper bound on \( \EE[\|\bc(\bx^{R+1})\|^{2}] \) is in the same order as \( \EE[\|\nabla \bc(\bx^{R+1}) \bc(\bx^{R+1})\|^{2}] \).

We are now ready to prove \Cref{Theorem: the convergence rate of storm with initial feasibility}.

\subsubsection{Proof of \Cref{Theorem: the convergence rate of storm with initial feasibility}}
\label{proof: storm with feasibility}
\begin{proof}
Consider the parameter choices in \eqref{Eq: the parameter conditions in VR for analysis}.
When $\|\bc(\bx^{0})\|^{2} = \calO(K^{-l})$ and the initial batch size is chosen as $b_0 = \calO(K^{l})$, we have $\calL_{\rho,\mu}(\bw^{0}) = \calO(1)$ and $\EE[\| \be^0\|^2] = \calO(K^{-l})$. Combining it with \eqref{Eq: convergence rate of storm}, we observe that the two bounds reduce to
\begin{align}
    \begin{cases}
    \max\left\{ \EE[\|\bu^{R+1}\|^{2}], \EE[\|\bx^{R+1}- \prox_{\mu g}(\bz^{R})\|^{2}] \right\} = \calO\!\left(\max\{K^{l-1},\,K^{\tau-l-1},\,K^{-\tau}\}\right), \\
    \EE[\|\nabla \bc(\bx^{R+1}) \bc(\bx^{R+1})\|^{2}] = \calO\!\left(\max\{K^{-l-1},\,K^{-\tau}\}\right).
    \end{cases} \label{Eq: the reduced complexity order of storm for analysis}
\end{align}
To achieve the optimal rate $\calO(K^{-2/3})$, we set $\tau = 2l = \tfrac{2}{3}$. Hence, to obtain a stochastic $\varepsilon$-stationary point, $K$ should be of order $\calO(\varepsilon^{-3})$. Noting that each iteration calls two stochastic gradients $\nabla \tf(\bx^{k},\xi^{k})$ and $\nabla \tf(\bx^{k-1},\xi^{k})$, and the initial batch requires $b_0 = \calO(K^{1/3}) = \calO(\varepsilon^{-1})$ gradient evaluations, the total oracle complexity is
\[
b_0 + 2K = \calO(\varepsilon^{-1}) + 2\calO(\varepsilon^{-3}) = \calO(\varepsilon^{-3}).
\]
Under \Cref{Ass: constraints conditions}, the established bound on $\EE[\|\bc(\bx^{R+1})\|^{2}]$ ensures that the oracle complexity to find a stochastic $\varepsilon$-KKT point is also $\calO(\varepsilon^{-3})$.
\end{proof}

\subsubsection{Proof of \Cref{Theorem: the basic convergence rate of storm without initial feasibility}}
\label{app:storm_without_initial_feasibility}
Without assuming initial approximate feasibility, we have $\calL_{\rho,\mu}(\bw^0)+1 = \calO(K^{l})$. With a constant initial batch size $b_0=\calO(1)$, \Cref{Ass: unbiased gradient oracle} gives $\EE[\|\be^0\|^2]=\calO(1)$. Invoking \Cref{Lemma: the error bound of the infeasible stationarity in expectation for VR} and combining it with \eqref{Eq: convergence rate of storm}, we observe that the two bounds admit
\[
    \calO\!\left(\max\{K^{2l-1},\,K^{\tau-1},\,K^{-\tau}\}\right)
    \qquad\text{and}\qquad
    \calO\!\left(\max\{K^{-1},\,K^{\tau-1},\,K^{-\tau}\}\right).
\]
If we choose $\tau=2l=\tfrac{1}{2}$, both reduce to $\calO(K^{-1/2})$. Hence, to obtain a stochastic $\varepsilon$-stationary point, $K$ should be of order $\calO(\varepsilon^{-4})$ and the total oracle complexity to find an $\varepsilon$-stationary point is of order $\calO(\varepsilon^{-4})$. Under \Cref{Ass: constraints conditions}, the same complexity holds for finding a stochastic $\varepsilon$-KKT point, establishing the result in \Cref{Theorem: the basic convergence rate of storm without initial feasibility}.

\section{Experimental Results}
\label{appendix:complete experiment results}
\subsection{Implementation Details}
\label{app:implementation_details}
All experiments were conducted in MATLAB R2018b on a MacBook Pro (4-core processor, 16 GB RAM) under macOS 15.3.2.

\vspace{3pt}
\noindent\textbf{Baselines}. We compare our algorithms with two double-loop baselines: SPDC \cite{xu2019stochastic,nitanda2017stochastic} for simple convex-constrained DC(-regularized) optimization and SALM \cite{sun2023algorithms} for linearly constrained DC-regularized optimization:
\begin{enumerate}
    \item[(i)] \textbf{SPDC \cite{xu2019stochastic,nitanda2017stochastic}:} This is a double-loop algorithm designed for solving DC problems by linearizing the concave component and adding a quadratic proximal term to construct a strongly convex subproblem at each outer iteration \( k \). To handle the constraint \( \|\mathbf{x}\|_2^2 = 1 \), we apply the same quadratic penalty approach for nonconvex constraints in our comparison. To ensure strong convexity of the subproblem, we adopt an appropriate proximal parameter. The inner subproblem is solved iteratively using the stochastic subgradient descent method (SPG) described in \cite{xu2019stochastic}.
    \item[(ii)] \textbf{SALM \cite{sun2023algorithms}:} This double-loop method is designed for deterministic, linearly constrained composite DC optimization. The method handles constraints by constructing the Augmented Lagrangian Method (ALM), linearizing the AL function at each outer iteration, and solving the inner loop using a proximal gradient method. We adopt a similar strategy and adapt it to nonlinear constraints by applying the same linearization approach. Specifically, the concave part is linearized at \(\bx^k\), and the inner solver is used to update the solution. For the stochastic part, we use the same gradient estimators as in our algorithm.
\end{enumerate}

\noindent\textbf{Hyperparameter Settings}. 
We initialize all algorithms from a feasible point, generated by normalizing a random vector. Given the data scale $N$, we use a batch size of 32 for the \texttt{a9a} and \texttt{phishing} datasets, and a batch size of 16 for the \texttt{australian} dataset. The momentum parameter \(\alpha\) is set to $0.905$ for Polyak momentum and $0.9$ for recursive momentum across all baselines. The maximum number of iterations is set to \(K = 25,000\) for all experiments. For MoSSP-P, the smoothing and penalty parameters are set according to the theoretical rates $\mu_k=\calO(K^{-1/2})$ and $\rho_k=\calO(K^{1/4})$; for MoSSP-R, they are set as $\mu_k=\calO(K^{-1/3})$ and $\rho_k=\calO(K^{1/3})$, with the initial batch size chosen according to $b_0=\calO(K^{1/3})$. For SPDC and SALM, we fix $\mu_k$ as a constant independent of $K$, tuned from $\{0.05, 0.2, 0.5, 1\}$. For SALM, $\rho$ is tuned from $\{0.01, 0.1, 0.5, 1\}$, while for SPDC, it is tuned from $\{1, 5, 10, 20\}$. The step size $\beta$ in both MoSSP variants is set to $\beta = 1$. For SALM, the dual update step size is set equal to the penalty parameter \(\rho\), while the step sizes for the SPDC subgradient update and the inner-loop update of SPDC are tuned from \(\{0.001, 0.01, 0.05, 0.1\}\). The regularization parameter \(\lambda\) is validated over the set \(\{0.005, 0.05, 0.1\}\), and the optimal value is selected. The number of inner iterations is chosen from the range \(5-10\).

We perform five independent runs for each dataset and algorithm combination. For fair time comparison, we first run MoSSP-P (\Cref{alg:mossp-p}) for \(K = 25{,}000\) iterations and record its total CPU time; all Polyak-momentum baselines are run under the same CPU-time budget, and their trajectories are plotted against the cumulative number of stochastic gradients. The same procedure applies to MoSSP-R (\Cref{alg:mossp-r}) and recursive momentum-based baselines. \Cref{Fig: phishing_results,Fig: australian_results} show convergence trajectories on the \texttt{phishing} and \texttt{australian} datasets (averaged over five runs).
\begin{figure}[t]
    \centering
    \includegraphics[width=0.95\columnwidth]{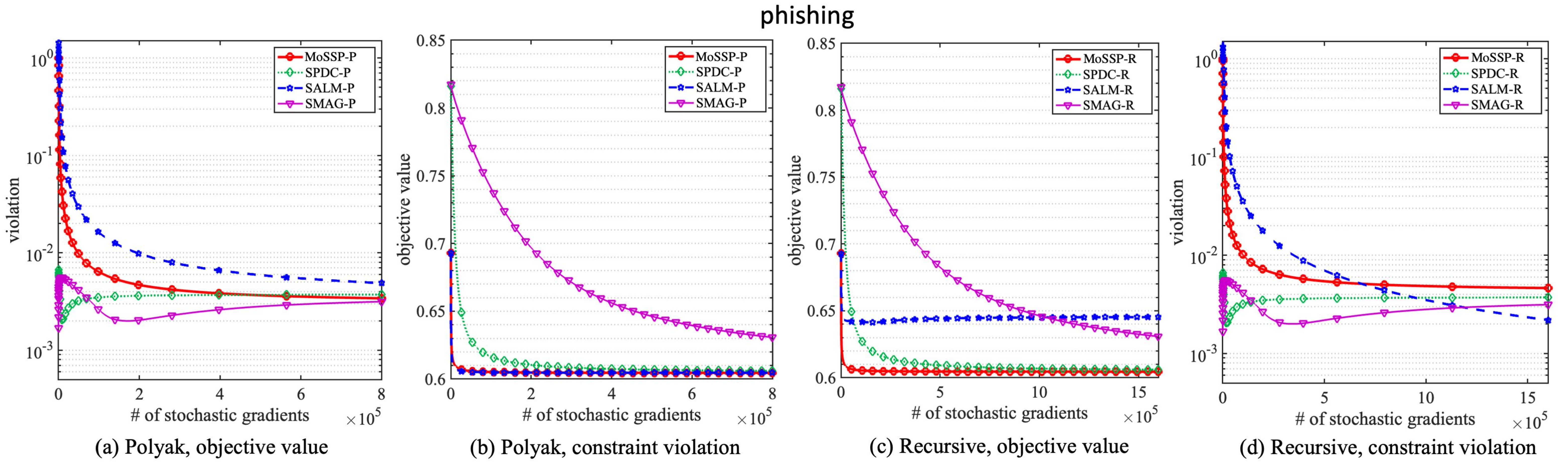}
    \caption{Comparison of MoSSP variants, SPDC, and SALM for solving the constrained binary classification problem \eqref{eq.libsvm} on the \texttt{phishing} dataset. (a) Objective value (Polyak). (b) Constraint violation (Polyak). (c) Objective value (Recursive). (d) Constraint violation (Recursive). Results are averaged over five independent runs.}
    \label{Fig: phishing_results}

    \vspace{0.8em}

    \includegraphics[width=0.95\columnwidth]{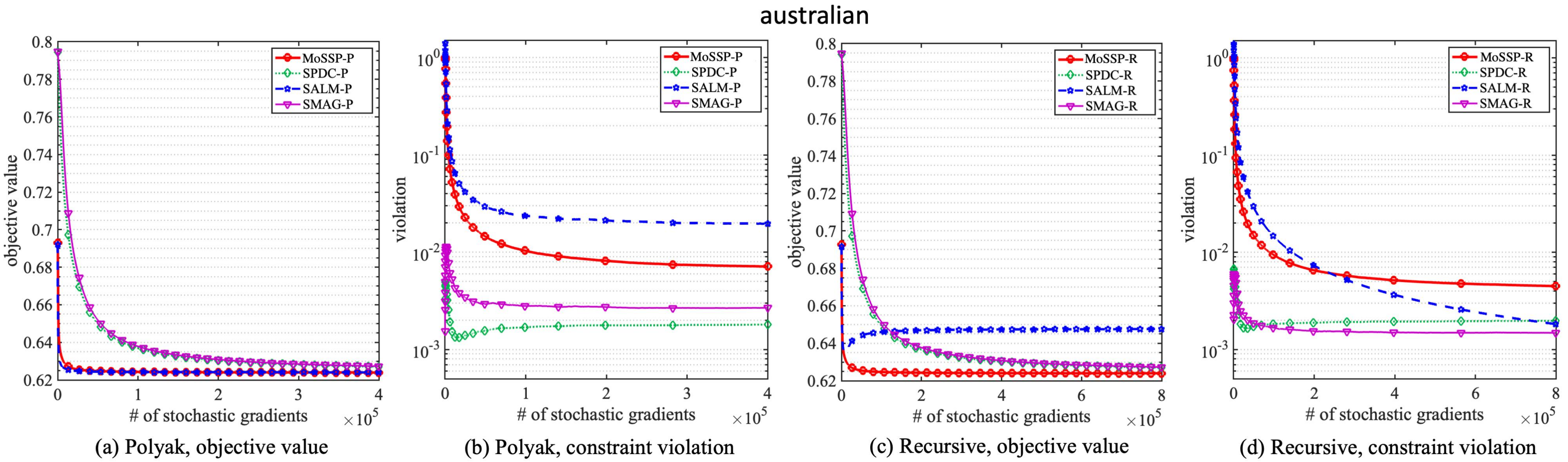}
    \caption{Comparison of MoSSP variants, SPDC, and SALM for solving the constrained binary classification problem \eqref{eq.libsvm} on the \texttt{australian} dataset. (a) Objective value (Polyak). (b) Constraint violation (Polyak). (c) Objective value (Recursive). (d) Constraint violation (Recursive). Results are averaged over five independent runs.}
    \label{Fig: australian_results}
     \vskip -0.1in
\end{figure}

\subsection{Additional Experimental Results}
\label{app:additional_experimental_results}

As shown in \Cref{Fig: phishing_results}, on the large-scale dataset \texttt{phishing}, both MoSSP variants converge faster in objective value and attain the lowest or highly competitive final results. Notably, the MoSSP variants rapidly reduce constraint violations from an initial level of $10^{0}$ to approximately $10^{-3}$, demonstrating effective feasibility control. In contrast, SPDC and SALM are slower at achieving feasibility, with violations remaining around $10^{-2}$ or higher throughout most of the optimization.

On the smaller-scale \texttt{australian} dataset (\Cref{Fig: australian_results}), the relative performance shifts. While the MoSSP variants still achieve faster objective descent, SPDC attains superior feasibility with violations stabilizing around the $10^{-3}$ level, outperforming the MoSSP variants in constraint satisfaction. This suggests that the simpler landscape and lower stochastic noise enable subgradient-based constraint handling to be more effective, whereas the MoSSP design prioritizes rapid objective reduction over strict feasibility in such settings. This scale-dependent behavior warrants further investigation.

\subsection{Experimental Results on Multiple Quadratic Equality Constraints}
\label{subsec:multiple_quadratic_equalities}
We test the proposed method on a quadratically constrained DC-regularized logistic regression problem
\cite{jin2022stochastic,shi2025momentum}:
\begin{align}
\label{eq:multiple_quadeq}
\min_{\bx \in \RR^n}\quad &
\frac{1}{N}\sum_{i=1}^{N}\log\bigl(1+\exp(-y_i X_i^\top \bx)\bigr)
    + \lambda\bigl(\|\bx\|_1-\|\bx\|_2\bigr) \notag \\
\mathrm{s.t.}\quad &
c_j(\bx):=\tfrac{1}{2}\sum_{\ell=1}^{n}q_{j,\ell}\,
x_\ell^2+\ba_j^\top \bx-b_j=0,
\quad j=1,\ldots,M.
\end{align}
Each constraint can be equivalently written as
\(c_j(\bx)=\tfrac{1}{2}\bx^\top Q_j\bx+\ba_j^\top\bx-b_j\), where
\(Q_j=\Diag(q_{j,1},\ldots,q_{j,n})\). Since all diagonal entries
\(q_{j,\ell}\) are positive, each \(Q_j\) is positive definite. 
We test on two LIBSVM datasets used in the main experiments:
\texttt{a9a} with \(N=32{,}561\) and \(n=123\), and
\texttt{phishing} with \(N=11{,}055\) and \(n=68\). Each sample vector is
normalized to unit \(\ell_2\) norm. We set \(M=20\). For each
constraint~\(j\), the coefficients \(q_{j,\ell}\) are sampled uniformly
from \([0.5,1]/n\), and
\(\ba_j \sim \mathcal{N}(\zero,\bI/n)\). A random unit vector
\(\bx_\star\) is generated first, and \(b_j\) is set to
\[
b_j=\tfrac{1}{2}\sum_{\ell=1}^{n}q_{j,\ell}x_{\star,\ell}^{2}
+\ba_j^\top\bx_\star,
\]
so that \(\bx_\star\) is feasible for all constraints.

\noindent\textbf{Hyperparameter Settings}.
Given the data scale~$N$, we use a batch size of~$32$ for both
\texttt{a9a} and \texttt{phishing} datasets. The momentum parameter is set to
$\alpha=0.905$ for Polyak momentum and $\alpha=0.9$ for recursive
momentum across all methods. The maximum number of iterations is set to
$K=20{,}000$, and the regularization parameter is fixed as
$\lambda=0.01$. For MoSSP-P, we use $\beta=1$,
$\mu_k=\calO(K^{-1/2})$, and $\rho_k=\calO(K^{1/4})$; for
MoSSP-R, we use $\beta=1$, $\mu_k=\calO(K^{-1/3})$, and
$\rho_k=\calO(K^{1/3})$, with the initial batch size chosen according
to $b_0=\calO(K^{1/3})$. For SPDC and SALM, $\mu_k$ is tuned from
$\{0.05,0.2,0.5,1\}$; $\rho_k$ is tuned from $\{0.01,0.1,0.5,1\}$ for
SALM and from $\{1,5,10,20\}$ for SPDC; the step sizes for the SPDC
subgradient update and the inner-loop update are tuned from
$\{0.001,0.01,0.05,0.1\}$. For SALM, the dual update step size is set
equal to~$\rho_k$. The double-loop baselines use $5$ inner iterations.

Each method is run five times with a shared constraint instance and
initialization. For each momentum setting, all baselines are evaluated under the same computational budget as the corresponding MoSSP variant. We report the objective value and the aggregate constraint violation $\textstyle\sum_{j=1}^{M}|c_j(\bx)|$. \Cref{Fig: quadeq_a9a_results,Fig: quadeq_phishing_results} show convergence trajectories on both datasets (averaged over five runs), while \Cref{table:quadeq_result_polyak,table:quadeq_result_recursive} report final values as mean $\pm$ std.  
\begin{figure}[t]
    \centering
    \includegraphics[width=0.95\columnwidth]{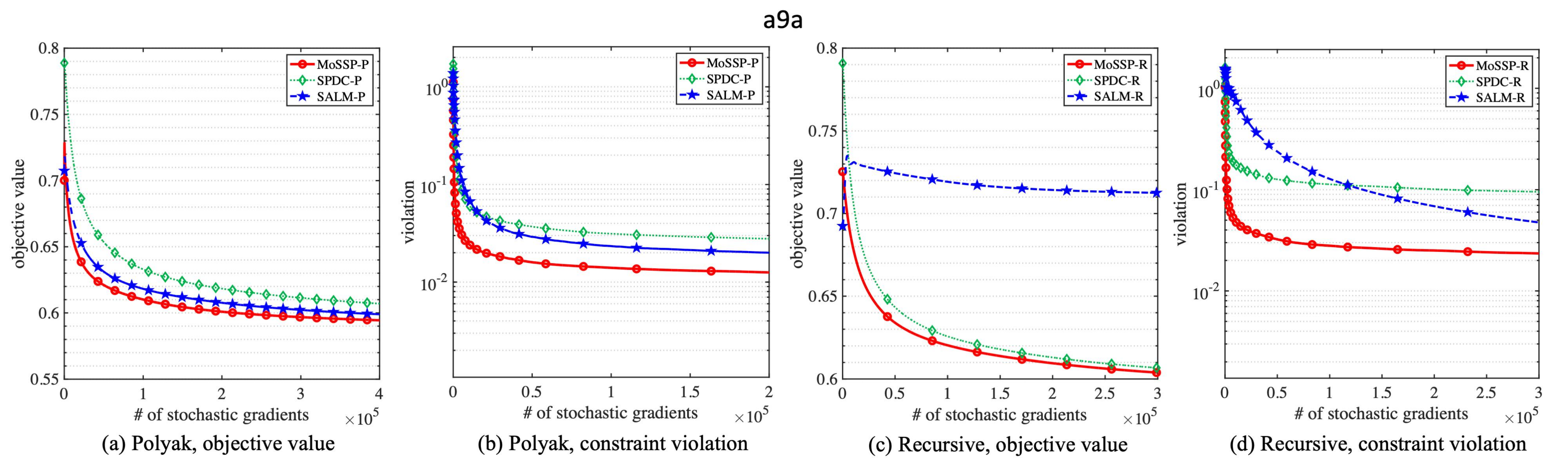}
    \caption{Comparison of MoSSP variants, SPDC, and SALM for solving the problem with multiple quadratic equality constraints \eqref{eq:multiple_quadeq} on the \texttt{a9a} dataset. (a) Objective value (Polyak). (b) Constraint violation (Polyak). (c) Objective value (Recursive). (d) Constraint violation (Recursive). Results are averaged over five independent runs.}
    \label{Fig: quadeq_a9a_results}

    \vspace{0.8em}

    \includegraphics[width=0.95\columnwidth]{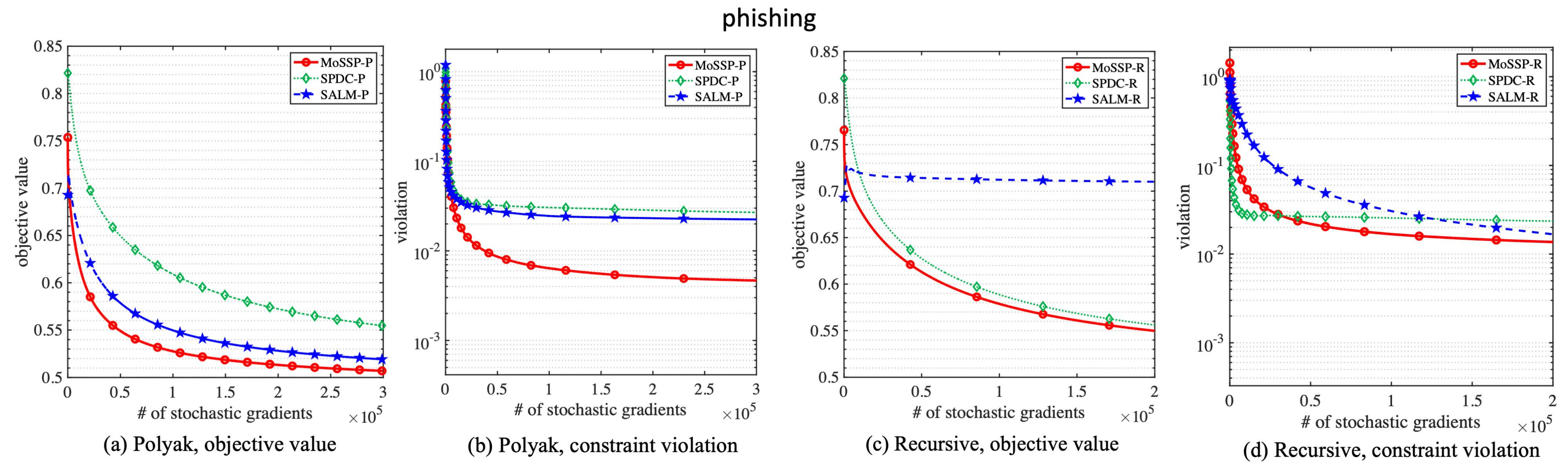}
    \caption{Comparison of MoSSP variants, SPDC, and SALM for solving the problem with multiple quadratic equality constraints \eqref{eq:multiple_quadeq} on the \texttt{phishing} dataset. (a) Objective value (Polyak). (b) Constraint violation (Polyak). (c) Objective value (Recursive). (d) Constraint violation (Recursive). Results are averaged over five independent runs.}
    \label{Fig: quadeq_phishing_results}
    \vskip -0.1in
\end{figure}
\begin{table*}[t]
\centering
\caption{Mean \(\pm\) std of objective value (Obj. Value) and constraint violation (Const. Viol.) for MoSSP-P and two baseline methods with Polyak momentum on the multiple quadratic equality experiment. Results are reported over five independent runs. Bold font denotes the best result.}
\label{table:quadeq_result_polyak}
\begin{sc}
\resizebox{0.9\textwidth}{!}{%
\begin{tabular}{lcccc}
\toprule
Dataset & Metric & MoSSP-P & SPDC-P & SALM-P \\ 
\midrule
\multirow{2}{*}{\texttt{a9a}} 
 & Obj. Value & $\mathbf{0.5811 \pm 1.15\times 10^{-5}}$ & $0.6017 \pm 2.40\times 10^{-5}$ & $0.5914 \pm 2.51\times 10^{-4}$ \\
 & Const. Viol. & $\mathbf{9.93\times 10^{-3} \pm 8.62\times 10^{-5}}$ & $2.27\times 10^{-2} \pm 5.56\times 10^{-5}$ & $1.90\times 10^{-2} \pm 2.46\times 10^{-4}$ \\ 
\midrule
\multirow{2}{*}{\texttt{phishing}} 
 & Obj. Value & $\mathbf{0.4951 \pm 9.20\times 10^{-5}}$ & $0.5294 \pm 7.07\times 10^{-5}$ & $0.5067 \pm 9.16\times 10^{-5}$ \\
 & Const. Viol. & $\mathbf{9.20\times 10^{-3} \pm 2.14\times 10^{-5}}$ & $2.40\times 10^{-2} \pm 5.37\times 10^{-5}$ & $1.22\times 10^{-2} \pm 1.07\times 10^{-4}$ \\ 
\bottomrule
\end{tabular}}%
\end{sc}
\vspace{0.8em}

\caption{Mean \(\pm\) std of objective value (Obj. Value) and constraint violation (Const. Viol.) for MoSSP-R and two baseline methods with recursive momentum on the multiple quadratic equality experiment. Results are reported over five independent runs. Bold font denotes the best result.}
\label{table:quadeq_result_recursive}
\begin{sc}
\resizebox{0.9\textwidth}{!}{%
\begin{tabular}{lcccc}
\toprule
Dataset & Metric & MoSSP-R & SPDC-R & SALM-R \\ 
\midrule
\multirow{2}{*}{\texttt{a9a}} 
 & Obj. Value & $\mathbf{0.5911 \pm 1.11\times 10^{-6}}$ & $0.5930 \pm 3.53\times 10^{-5}$ & $0.7085 \pm 9.04\times 10^{-6}$ \\
 & Const. Viol. & $\mathbf{4.07\times 10^{-3} \pm 3.72\times 10^{-5}}$ & $1.79\times 10^{-2} \pm 4.05\times 10^{-5}$ & $1.70\times 10^{-2} \pm 1.16\times 10^{-4}$ \\ 
\midrule
\multirow{2}{*}{\texttt{phishing}} 
 & Obj. Value & $\mathbf{0.5063 \pm 1.04\times 10^{-6}}$ & $0.5085 \pm 1.27\times 10^{-4}$ & $0.7030 \pm 3.36\times 10^{-6}$ \\
 & Const. Viol. & $9.93\times 10^{-3} \pm 2.58\times 10^{-5}$ & $1.94\times 10^{-2} \pm 2.42\times 10^{-5}$ & $\mathbf{3.27\times 10^{-3} \pm 1.42\times 10^{-5}}$ \\ 
\bottomrule
\end{tabular}}%
\end{sc}
\vskip -0.1in
\end{table*}

\noindent\textbf{Experimental Results}. As shown in \Cref{Fig: quadeq_a9a_results,Fig: quadeq_phishing_results}, the MoSSP variants achieve faster objective decrease than the baselines on both datasets. With Polyak momentum, MoSSP-P also yields lower constraint violation than SPDC-P and SALM-P. With recursive momentum, MoSSP-R maintains the fastest objective convergence and achieves competitive, often lower, constraint violation compared with SPDC-R and SALM-R. \Cref{table:quadeq_result_polyak,table:quadeq_result_recursive} provide the corresponding final quantitative comparisons on the \texttt{a9a} and \texttt{phishing} datasets. Under Polyak momentum, MoSSP-P achieves both the lowest objective value and the smallest constraint violation on both datasets; in particular, it reaches an objective value of \(0.5811\) and a violation of \(9.93\times 10^{-3}\) on the \texttt{a9a} dataset, and keeps the violation below \(10^{-2}\) on the \texttt{phishing} dataset. With recursive momentum, MoSSP-R attains the best objective values on both datasets and the smallest constraint violation on the \texttt{a9a} dataset. On the \texttt{phishing} dataset, SALM-R achieves the smallest final violation, but at the cost of a substantially larger objective value than MoSSP-R and SPDC-R.

\end{document}